\documentclass{cmslatex}
\usepackage{latexsym, amssymb, enumerate, amsmath}


\usepackage[dvips]{graphicx}
\usepackage{placeins}

\sloppy

\thinmuskip = 0.5\thinmuskip \medmuskip = 0.5\medmuskip
\thickmuskip = 0.5\thickmuskip \arraycolsep = 0.3\arraycolsep


\newtheorem{thm}{Theorem}[section]

\newtheorem{rem}[thm]{Remark}

\begin{document}

\title{Beyond pressureless gas dynamics :  Quadrature-based velocity moment models}


\author{Christophe  Chalons
\thanks {Universit\'e Paris Diderot-Paris 7 \& Laboratoire J.-L. Lions, U.M.R.~7598
UMPC, Bo\^ite courrier 187, 75252 Paris Cedex 05, France and Laboratoire EM2C - UPR CNRS 288, Ecole Centrale Paris, Grande Voie des Vignes, 92295 Chatenay-Malabry Cedex (chalons@math.jussieu.fr). This research 
was partially supported by  an ANR Young Investigator Award (French Agence Nationale de la Recherche), contract ANR-08-JCJC-0132-01 - INTOCS -  2009-2013}
\and Damien Kah
\thanks {Laboratoire EM2C - UPR CNRS 288, Ecole Centrale Paris, Grande Voie des Vignes, 92295 Chatenay-Malabry Cedex and IFP Energies Nouvelles, (damienk@stanford.edu). D. Kah was funded during his Ph.D. Thesis by an  IFPEN/EM2C CIFRE Ph.D. Grant.}
\and Marc Massot
\thanks {Laboratoire EM2C - UPR CNRS 288, Ecole Centrale Paris, Grande Voie des Vignes, 92295 Chatenay-Malabry Cedex (marc.massot@ecp.fr -  Corresponding author. Present address : mmassot@stanford.edu - Center for Turbulence 
Research, Stanford University). This research was supported by an ANR Young Inverstigator Award for M. Massot, ANR-05-JCJC-0013 - j\'eDYS  - 2005-2009. We would like to thank the Center for Turbulence Research at Stanford University and its director, Parviz Moin, for their kind invitation during the Summer Program 2010 where the present work was completed and Laboratory EM2C for supporting the visit of C. Chalons at Stanford as well as  Damien Kah's Master Thesis during which the study was initiated.}}
\maketitle

\begin{abstract}
Following the seminal work of F. Bouchut on zero pressure gas dynamics, extensively used for
gas particle-flows, the present contribution  investigates quadrature-based velocity moments models for kinetic equations in the framework of the infinite Knudsen number limit, that is, for dilute clouds of small particles where the collision or coalescence probability asymptotically approaches zero. Such models define a hierarchy based on the number of moments and associated quadrature nodes, the first level of which leads to pressureless gas dynamics.
We focus in particular on the four moment model where the flux closure is provided 
by a two-node quadrature in the velocity phase space and provides the right framework for studying both smooth and singular solutions.
The link with both the kinetic underlying equation as well as with zero pressure gas dynamics, i.e. the dynamics at the frontier of the moment space of order four,  is provided.  We define the notion of measure solutions and characterize
the mathematical structure of the resulting system of four PDEs. 
We exhibit a family of entropies and entropy fluxes and define the notion of entropic solution. We study the Riemann problem and provide entropic solutions in particular cases. This leads to  a rigorous link with the possibility of the system of macroscopic PDEs to allow particle trajectory crossing (PTC) in the framework of smooth solutions. 
Generalized $\delta$-shock solutions resulting from Riemann problem are also investigated. Finally, using a kinetic scheme proposed in the literature in several areas, we validate such a numerical approach and propose a dedicated extension at the frontier of the moment space
in the framework of both regular and singular solutions. This is a key issue for application fields where such an approach is extensively used.
\end{abstract}

\begin{keywords}
\smallskip
Quadrature-based moment methods; gas-particle flows; kinetic theory; particle trajectory crossing; entropic measure solution; frontier of the moment space\\

{\bf subject classifications 76N15 (35L65 65M99 76M25 82C40).}
\end{keywords}

\section{Introduction}\label{intro}

The physics of particles and droplets in a carrier gaseous flow field are described in 
many applications (fluidized beds, spray dynamics, alumina particles in rocket boosters, $\dots$)
by a number density function (NDF) satisfying a kinetic equation introduced by \cite{Williams58}. Solving such a kinetic equation relies either on a sample of discrete numerical parcels of particles through a Lagrangian--Monte-Carlo approach or on a moment approach resulting in a Eulerian system of conservation laws on velocity moments eventually conditioned on size. In the latter case investigated in the present contribution, the main difficulty for particle flows with high Knudsen numbers (i.e.\ weakly collisional flows), where the velocity distribution can be very far from equilibrium, is the closure of the convective transport at the macroscopic level. One way to proceed is to use quadrature-based moment methods where the higher-order moments required for closure are evaluated from the lower-order transported moments using  quadratures in the form of a sum of Dirac delta functions in velocity phase space (see \cite{cqmom10,kah10phd} and the references therein).

 Such an approach also allows for a well-behaved kinetic numerical scheme in the spirit of Bouchut \cite{Bouchut03} (See references from \cite{Laurent02,sympo08,SdcThesis,MassotVki,ctrSdc}  to \cite{kah10,freret10ctr,cqmom10}) where the fluxes in a cell-centered finite-volume formulation are directly evaluated from the knowledge of the quadrature abscissas and weights with guaranteed realizability conditions and singularity treatment. Such a quadrature approach and the related numerical methods have been shown to be able to capture particle trajectory crossing (PTC) in a Direct Numerical Simulation (DNS) context, where the distribution in the exact kinetic equation remains at all times in the form of a sum of Dirac delta functions. Such methods can be extended to partially high-order numerical schemes (\cite{vwpf10}). 
 
  In another component of the literature devoted to multiphase semiclassical limits of the Schr\"odinger equation \cite{jin03,gosse03,gosse05}, the series of Wigner measures obtained from the Wigner transform for studying the semiclassical limits can be shown to converge towards a measure solution of the Liouville equation. Such an equation naturally unfolds the caustics and can generate the proper multiphase solutions globally in time. Two approaches have been used to solve this equation with a moment approach, either the {\em Heaviside closure} \cite{brenier98} as it is called in \cite{jin03,gosse03}, or, the one which is related to the present work, the {\em delta closure} (see \cite{jin03,gosse03} and references therein). It
 leads to a weakly hyperbolic systems of conservation laws by taking moments of a Liouville equation exactly identical to the Williams-Boltzmann equation studies in gas-particle flows previously mentioned. Such approaches naturally degenerate towards the pressureless gas system of equation in the context of monokinetic velocity distributions \cite{MassotVki,kah10,kah10phd,runborg00}.
 
 Numerical algorithms in order to simulate such systems of conservations laws with the related {\em delta closure} or {\em quadrature-based closure} have been proposed in \cite{jin03,gosse03} and \cite{dfv08} independently, from the work for \cite{Bouchut03,Laurent02} using naturally kinetic scheme with finite volume methods. However, many issues are still to be tackled in order to reach fully high order numerical schemes that preserve the vector of moments inside or at the frontier of the moment space, thus leading to several possibilities of degeneration from a given number of abscissas to a lower one. In fact, such models are meant to capture a given level of complexity in the phase space which is fixed in advance by the number of moments and related quadrature nodes. In some particular situations, for perfectly controlled dynamics, it can be guaranteed that the solutions will remain smooth and consist in free boundary value (contact discontinuities) problem associated with switches between various numbers of quadrature abscissas. However, in most cases the numerical schemes have to tackle the possibility of singular solutions when the dynamics complexity goes beyond the one allowed by the model.
 In such cases the solution  of the resulting system of PDEs is the viscosity solution and does not reproduce the exact dynamics in phase space and measure solutions are expected, for which we need a precise framework.
 More specifically, even if for the pressureless gas system, 
 \cite{Bouchut94} had set the correct mathematical background to define general entropic solutions, such a work had not yet been performed for higher order moment methods in the cited publications and no rigorous link has been provided between these and zero pressure gas dynamics. This is the purpose of the present contribution for both theoretical and numerical points of view.
 
 The paper is organized as follows. First we introduce the {\em quadrature-based} or {\em delta closure} velocity moment models for 
kinetic equations and focus on the four moment model in order to
generalize to what can be done to
higher orders but which would be difficult to expose due to algebra complications.  The behavior at the frontier of the moment space is characterized as well as the mathematical structure of the system of conservation laws. We then define entropy conditions and provide, 
for smooth solution, the one-to-one kinetic-macroscopic relation.   We then tackle the Riemann problem and define entropy measure 
solutions. Three examples of piecewise linear and singular solutions are then provided for which we rigorously identify the entropic character of 
the solution and which are then reproduced numerically. 

\section{Quadrature-based velocity moment models for kinetic equations}

Consider the solution $f=f(t,x,v)$ of the free transport kinetic equation 
\begin{equation} \label{eq:cinetique}
\partial_t f + v \partial_x f = 0, \quad t> 0, \, x \in \mathbb{R}, \, v \in \mathbb{R},\qquad   f(0,x,v) = f_0(x,v)\\[-1.5ex]
\end{equation}
The exact solution is given by 
$f(t,x,v) = f(0,x-vt,v) = f_0(x-vt,v)$.
Defining the $i$-order moment
$M_i = \int_{v} f(t,x,v) v^i dv$,  $i=1,...,N, \quad N \in \mathbb{N}$,
the associated governing equations are easily obtained 
from (\ref{eq:cinetique}) after multiplication by $v^i$ and integration over $v$, 
and write
\begin{equation*}
\partial_t M_i + \partial_x M_{i+1} = 0, \quad i \geq 0. \\[-1.5ex]
\end{equation*}
For the sake of simplicity, but without any restriction, we will focus our attention hereafter 
on the four-moment model
\begin{equation} \label{eq:modele_bipic}
\left\{
\begin{array}{l}
\partial_t M_0 + \partial_x M_1 = 0, \\
\partial_t M_1 + \partial_x M_2 = 0, \\
\partial_t M_2 + \partial_x M_3 = 0, \\
\partial_t M_3 + \partial_x \overline{M_4} = 0.
\end{array}
\right. 
\end{equation}
It will be convenient to write (\ref{eq:modele_bipic}) under the following abstract form 
\begin{equation} \label{eq:modele_bipic_short}
\partial_t {\bf M} + \partial_x {\bf F}({\bf M}) = 0,
\end{equation}
with ${\bf M} = (M_0,M_1,M_2,M_3)^t$ and ${\bf F}({\bf M}) = (M_1,M_2,M_3,\overline{M_4})^t$. \\

\subsection{Quadrature inside the moment space}

This model is closed provided that $\overline{M_4}$ is defined as a function of ${\bf M}$.
In quadrature-based moment methods, the starting point to define this closure relation 
consists in representing the velocity distribution of $f(t,x,v)$ by a set of 
two Dirac delta functions, that is a two-node quadrature~: 
\begin{equation}
f(t,x,v) = \rho_1(t,x) \delta (v - v_1(x,t)) + \rho_2(t,x) \delta (v - v_2(x,t)),
\end{equation}
where the weights $\rho_1(t,x) > 0$, $\rho_2(t,x) > 0$ and the velocity abscissas $v_1(t,x)$, $v_2(t,x)$ 
are expected to be uniquely determined from the knowledge of ${\bf M}(x,t)$. 
Dropping the $(x,t)$-dependance to avoid cumbersome notations, such a function $f$ 
has exact moments of order $i=0,...,4$ given by $\rho_1 v_1^i + \rho_2 v_2^i$. The next step then naturally 
consists in setting 
\begin{equation} \label{eq:M4}
\overline{M_4} = \rho_1 v_1^4 + \rho_2 v_2^4
\end{equation}
where $\rho_1$, $\rho_2$ and $v_1$, $v_2$ are defined from ${\bf M}$ by the following nonlinear 
system~:
\begin{equation} \label{eq:quadrature}
\left\{
\begin{array}{l}
M_0 = \rho_1 + \rho_2, \\
M_1 = \rho_1 v_1 + \rho_2 v_2, \\
M_2 = \rho_1 v_1^2 + \rho_2 v_2^2, \\
M_3 = \rho_1 v_1^3 + \rho_2 v_2^3. \\
\end{array}
\right. 
\end{equation}
At last, it remains to prove that this system is well-posed, which is the matter of the next proposition. 
We refer to \cite{jin03,gosse03,dfv08} for the proof. \\

\begin{proposition}\label{prop_quadrature}
System (\ref{eq:modele_bipic_short})-(\ref{eq:M4})-(\ref{eq:quadrature}) is well-defined 
on the convex phase space $\Omega$, also called the moment space, given by
$$
\Omega = \{{\bf M} = (M_0,M_1,M_2,M_3)^t, M_0 > 0, M_0 M_2 - M_1^2 > 0\}.
$$
Moreover, setting ${\bf U} = (\rho_1,\rho_2,\rho_1 v_1,\rho_2 v_2)^t$, the function 
${\bf U} = {\bf U}({\bf M})$ is one-to-one and onto as soon as we set for instance $v_1 > v_2$.
Moreover we have $0 < \rho_1 < M_0$ and $0 < \rho_2 < M_0$. \\
\end{proposition}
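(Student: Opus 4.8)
The plan is to recognize the nonlinear system (\ref{eq:quadrature}) as the classical two-node Gauss quadrature problem and to decouple it into two linear problems. First I would observe that if $(\rho_1,\rho_2,v_1,v_2)$ solves (\ref{eq:quadrature}), then the monic polynomial $P(v) = (v-v_1)(v-v_2) = v^2 - a v - b$ annihilates both abscissas, so that $\rho_1 P(v_1) + \rho_2 P(v_2) = 0$ and $\rho_1 v_1 P(v_1) + \rho_2 v_2 P(v_2) = 0$. Expanding these two identities with the four equations of (\ref{eq:quadrature}) yields the linear system $a M_1 + b M_0 = M_2$ and $a M_2 + b M_1 = M_3$ for the coefficients $(a,b)$. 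Its determinant equals $M_1^2 - M_0 M_2$, which is nonzero on $\Omega$, so $(a,b)$ — hence $P$ — is uniquely determined by ${\bf M}$.

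Second I would prove that the roots of $P$ are real and distinct. Since the whole construction is invariant under the velocity shift $v \mapsto v - M_1/M_0$, which leaves $\rho_1,\rho_2,M_0$ and the quantity $M_0 M_2 - M_1^2$ unchanged, I may assume $M_1 = 0$. Solving the linear system then gives $b = M_2/M_0$, which is strictly positive on $\Omega$, so the discriminant $a^2 + 4b$ of $P(v) = v^2 - a v - b$ is strictly positive and the two roots are real and distinct; I then label them $v_1 > v_2$. Moreover the product of the roots is $-b < 0$, so they straddle the origin, $v_2 < 0 < v_1$.

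Third, the weights are recovered from the first two equations of (\ref{eq:quadrature}), a $2\times 2$ Vandermonde system of determinant $v_2 - v_1 \neq 0$, giving $\rho_1 = (M_1 - v_2 M_0)/(v_1 - v_2)$ and $\rho_2 = (v_1 M_0 - M_1)/(v_1 - v_2)$. In the normalized frame these reduce to $\rho_1 = -v_2 M_0/(v_1-v_2)$ and $\rho_2 = v_1 M_0/(v_1-v_2)$, both strictly positive because $v_2 < 0 < v_1$ and $M_0 > 0$; since $\rho_1 + \rho_2 = M_0$ with both weights positive, the bounds $0 < \rho_i < M_0$ follow immediately. It remains to check that the last two equations of (\ref{eq:quadrature}) hold automatically, which is where the choice of $(a,b)$ pays off: from $v_i^2 = a v_i + b$ one gets $\rho_1 v_1^2 + \rho_2 v_2^2 = a M_1 + b M_0 = M_2$ and then $\rho_1 v_1^3 + \rho_2 v_2^3 = a M_2 + b M_1 = M_3$.

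Finally, for the bijectivity statement I would argue as follows. The construction produces, for each ${\bf M} \in \Omega$, exactly one quadruple $(\rho_1, \rho_2, v_1, v_2)$ with $v_1 > v_2$, hence a unique ${\bf U}$, and uniqueness at each step gives injectivity. For surjectivity I would run the computation backwards: starting from any admissible data with $\rho_1,\rho_2 > 0$ and $v_1 > v_2$, the induced moments satisfy $M_0 = \rho_1 + \rho_2 > 0$ and, by the Lagrange identity, $M_0 M_2 - M_1^2 = \rho_1 \rho_2 (v_1 - v_2)^2 > 0$, so ${\bf M} \in \Omega$. I expect the \emph{main obstacle} to be the reality and distinctness of the abscissas, since this is exactly the place where the strict Hankel positivity $M_0 M_2 - M_1^2 > 0$ is indispensable; the reduction to $M_1 = 0$ is the device that makes this step transparent.
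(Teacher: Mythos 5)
Your proof is correct, but note that the paper itself never proves this proposition: it defers to the references \cite{jin03,gosse03,dfv08}, so your argument is a self-contained reconstruction rather than a variant of an in-text proof. It is worth observing how closely your devices match machinery the paper deploys elsewhere for other purposes. Your linear system $aM_1+bM_0=M_2$, $aM_2+bM_1=M_3$ for the coefficients of the annihilating polynomial $P(v)=v^2-av-b$ is exactly the system (\ref{eq:sigma01}) appearing in the proof of Proposition \ref{prop_hyper}, with $\sigma_0=-b=v_1v_2$ and $\sigma_1=-a=-(v_1+v_2)$, and the same determinant $M_1^2-M_0M_2$; and your shift $v\mapsto v-M_1/M_0$ is precisely the normalization used in the footnote of Section \ref{border} to introduce the reduced variables, under which your $b$ becomes the paper's $e=(M_0M_2-M_1^2)/M_0^2$ (up to the mass normalization), so that positivity of the discriminant $a^2+4b=(v_1-v_2)^2$ is the statement $e>0$ inside $\Omega$. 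Your route buys two things the bare citation does not: the sign information $v_2<M_1/M_0<v_1$, which immediately yields the weight bounds $0<\rho_i<M_0$ via the Vandermonde formulas, and the observation that the third and fourth moment equations of (\ref{eq:quadrature}) hold automatically from $v_i^2=av_i+b$, which cleanly separates the two genuinely nonlinear constraints from the two linear ones. Two small points: for surjectivity you should say explicitly that, given admissible ${\bf U}$, re-applying the construction to ${\bf M}({\bf U})$ returns ${\bf U}$ itself — this follows from your uniqueness of $(a,b)$ and of the Vandermonde solution, since the given $(v_1,v_2)$ are roots of the unique annihilating polynomial, but the sentence ``uniqueness at each step gives injectivity'' conflates well-definedness of ${\bf M}\mapsto{\bf U}$ with invertibility (injectivity is really the trivial direction: ${\bf M}$ is recovered from ${\bf U}$ by the moment formulas). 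Second, the statement also asserts that $\Omega$ is \emph{convex}, which you leave untouched; it is one line — $(M_0,M_1)\mapsto M_1^2/M_0$ is jointly convex on $M_0>0$, so $\{M_2-M_1^2/M_0>0\}$ is a convex set and $\Omega$ is its product with the $M_3$-axis — but it should be said.
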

\ \\
\noindent Proposition \ref{prop_quadrature} can be extended to the more general case of a $2k$-moment models, $k>1$. 
The velocity distribution
is represented in this situation by a set of $k$ Dirac delta functions, leading to $M_i = \sum_{j=1}^{k} \rho_j v_j^i$,
$i=0,...,2k-1$, and $\overline{M_{2k}} = \sum_{j=1}^{k} \rho_j v_j^{2k}$.  

\subsection{Hyperbolic structure inside the moment space}

The two-moment model, corresponding to $k=1$ (one-node quadrature) writes 
$$
\left\{
\begin{array}{l}
\partial_t \rho + \partial_x \rho v = 0, \\
\partial_t \rho v + \partial_x \rho v^2 = 0,
\end{array}
\right. 
$$
which is the well-known pressureless gas dynamics system. Recall that this model is  weakly hyperbolic 
(the jacobian matrix is not diagonalizable) with $v$ as unique eigenvalue, the characteristic field 
being linearly degenerate. Since there can be areas in the solution where a single quadrature node is sufficient at the frontier of the moment space in order to describe the dynamics, the solution in such zones will satisfy the previous  system of two conservation laws. However we will first work inside the moment space and leave the behavior at the frontier for the next subsection.

Actually, we will observe in the course of the next section that the four-moment model (\ref{eq:modele_bipic_short}) 
is equivalent for smooth solutions (only) to two decoupled pressureless gas dynamics systems 
associated with $(\rho_1,\rho_1 v_1)$ and $(\rho_2,\rho_2 v_2)$ respectively. Then (\ref{eq:modele_bipic_short}) 
is expected to admit two eigenvalues $v_1$ and $v_2$ and to be weakly hyperbolic with linearly degenerate 
characteristic fields, as stated in the following proposition. \\

\begin{proposition} (\cite{jin03,gosse03}) \label{prop_hyper}
System (\ref{eq:modele_bipic_short})-(\ref{eq:M4})-(\ref{eq:quadrature}) is weakly hyperbolic 
on $\Omega$ and admits the two eigenvalues $v_1$ and $v_2$, $v_1 \neq v_2$. The associated characteristic fields 
are linearly degenerate. \\
\end{proposition}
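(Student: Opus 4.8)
The plan is to reduce the statement to a study of the eigenstructure of the flux Jacobian $\mathbf{A}(\mathbf{M}) = \nabla_{\mathbf{M}}\mathbf{F}(\mathbf{M})$. Since $F_i = M_{i+1}$ for $i=0,1,2$, this Jacobian is a companion matrix whose first three rows are shifts and whose last row is $\nabla_{\mathbf{M}}\overline{M_4}$. To make this row explicit, I would first exploit the two-node quadrature structure: writing $s = v_1 + v_2$ and $p = v_1 v_2$, the abscissas solve $v^2 = sv - p$, so every moment obeys the recurrence $M_{i+2} = s M_{i+1} - p M_i$. In particular $\overline{M_4} = sM_3 - pM_2$, while the two relations $M_2 = sM_1 - pM_0$ and $M_3 = sM_2 - pM_1$ form a linear system for $(s,p)$ whose determinant is exactly $M_0M_2 - M_1^2 > 0$ on $\Omega$; hence $s$ and $p$, and therefore $\overline{M_4}$, are smooth functions of $\mathbf{M}$ on $\Omega$.

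Next I would identify the eigenvalues. For a companion matrix, $\lambda$ is an eigenvalue with right eigenvector $\mathbf{r} = (1,\lambda,\lambda^2,\lambda^3)^t$ precisely when $\nabla_{\mathbf{M}}\overline{M_4}\cdot(1,\lambda,\lambda^2,\lambda^3)^t = \lambda^4$. The key observation is that differentiating the linear system for $(s,p)$ in the direction $\mathbf{r}_j = (1,v_j,v_j^2,v_j^3)^t$ produces right-hand sides proportional to $v_j^2 - s v_j + p$ and $v_j(v_j^2 - s v_j + p)$, both of which vanish because $v_j$ is a root of $v^2 - sv + p = 0$. Consequently $ds = dp = 0$ along $\mathbf{r}_j$, and a direct check gives $\nabla_{\mathbf{M}}\overline{M_4}\cdot \mathbf{r}_j = v_j^4$, so $v_1$ and $v_2$ are eigenvalues with eigenvectors $\mathbf{r}_1,\mathbf{r}_2$. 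Computing the characteristic polynomial, which I expect to factor as $(\lambda^2 - s\lambda + p)^2 = \big((\lambda - v_1)(\lambda - v_2)\big)^2$, then shows these are the only eigenvalues, each of algebraic multiplicity two, real and distinct on $\Omega$ by Proposition \ref{prop_quadrature}.

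Weak hyperbolicity would then follow from the companion structure: such matrices are non-derogatory, so the eigenspace of each $v_j$ is the single line spanned by $\mathbf{r}_j$. Since the geometric multiplicity $1$ is strictly less than the algebraic multiplicity $2$, $\mathbf{A}$ cannot be diagonalized while all eigenvalues remain real, which is exactly weak hyperbolicity. Finally, for linear degeneracy I would differentiate $v_j^2 - s v_j + p = 0$ to obtain $(2v_j - s)\,dv_j = v_j\,ds - dp$, that is $(v_1 - v_2)\,dv_1 = v_1\,ds - dp$ and symmetrically for $v_2$. Evaluating along the eigenvector $\mathbf{r}_j$, where $ds = dp = 0$ was established above, yields $\nabla_{\mathbf{M}} v_j \cdot \mathbf{r}_j = 0$, so each characteristic field is linearly degenerate.

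The main obstacle is the explicit computation and factorization of the characteristic polynomial into $(\lambda^2 - s\lambda + p)^2$, which requires the four partial derivatives of $\overline{M_4}$; these follow from Cramer's rule applied to the $(s,p)$-system together with the moment recurrence (for instance one checks $\partial_{M_3}\overline{M_4} = 2s$). Everything else is remarkably clean, because the single identity that $v_j$ solves $v^2 - sv + p = 0$ is what simultaneously forces $ds = dp = 0$ along $\mathbf{r}_j$, identifies the eigenvalues, and delivers the linear degeneracy.
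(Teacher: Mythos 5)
Your proposal is correct, and it reaches the conclusion by a genuinely different (and leaner) route than the paper. The shared backbone is the same: both arguments reduce the closure to the elementary symmetric functions of the abscissas (your $(s,p)=(v_1+v_2,\,v_1v_2)$ are exactly the paper's $(-\sigma_1,\sigma_0)$), solve for them via the $2\times 2$ system with determinant $M_0M_2-M_1^2>0$, use the recurrence $\overline{M_4}=sM_3-pM_2$, and exploit the companion structure of the Jacobian. Where you diverge is in how the eigenstructure is extracted: the paper computes all four entries of the last Jacobian row explicitly in terms of $(\sigma_0,\sigma_1)$ and then factors the quartic characteristic polynomial as $(\lambda-v_1)^2(\lambda-v_2)^2$, whereas you differentiate the defining relations implicitly along the candidate eigenvectors $\mathbf{r}_j=(1,v_j,v_j^2,v_j^3)^t$ and observe that the right-hand sides vanish because $v_j$ solves $\lambda^2-s\lambda+p=0$, forcing $ds=dp=0$ along $\mathbf{r}_j$ (the $2\times 2$ homogeneous system again has nonzero determinant $M_0M_2-M_1^2$). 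This single identity then delivers three things at once: $\nabla_{\mathbf M}\overline{M_4}\cdot\mathbf{r}_j=sv_j^3-pv_j^2=v_j^4$, hence the eigenpairs; the non-derogatory companion structure (eigenspaces of dimension one versus algebraic multiplicity two), hence non-diagonalizability with real spectrum, i.e.\ weak hyperbolicity; and, via $(2v_j-s)\,dv_j=v_j\,ds-dp$ with $2v_j-s=\pm(v_1-v_2)\neq 0$ on $\Omega$, the relation $\nabla_{\mathbf M}v_j\cdot\mathbf{r}_j=0$, i.e.\ linear degeneracy. Notably, the paper's proof only establishes the eigenvalues directly and delegates weak hyperbolicity and linear degeneracy to the cited references, so on those two points your argument is a genuine addition rather than a variant. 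The one place where you cannot avoid the paper's brute-force computation is the multiplicity-two claim: knowing $v_1,v_2$ are eigenvalues does not by itself pin down the quartic, so the Cramer's-rule evaluation of $\nabla_{\mathbf M}\overline{M_4}$ (e.g.\ $\partial_{M_3}\overline{M_4}=s+(M_0M_3-M_1M_2)/(M_0M_2-M_1^2)=2s$, matching the paper's $-2\sigma_1$) and the factorization $(\lambda^2-s\lambda+p)^2$ are still needed — you flag this honestly, and your check is consistent with the paper's last row $(-\sigma_0^2,\,-2\sigma_0\sigma_1,\,-2\sigma_0-\sigma_1^2,\,-2\sigma_1)$.
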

\begin{proof} For the sake of completeness, we propose here a direct proof of the eigenvalues 
$v_1$ and $v_2$ of (\ref{eq:modele_bipic_short})-(\ref{eq:M4})-(\ref{eq:quadrature}).
By (\ref{eq:quadrature}), we first easily get 
$$
\left\{
\begin{array}{l}
M_0 = \rho_1 + \rho_2, \\
M_1 - v_1 M_0 = \rho_2 (v_2-v_1), \\
M_2 - v_1 M_1 = \rho_2 v_2 (v_2 - v_1), \\
M_3 - v_1 M_2 = \rho_2 v_2^2(v_2-v_1), \\
\end{array}
\right. 
$$
and then, setting $\sigma_0 = v_1 v_2$ and $\sigma_1 = -(v_1+v_2)$,
$$
\left(
\begin{array}{cc}
M_0 & M_1 \\
M_1 & M_2 \\
\end{array}
\right) \,
\left(
\begin{array}{c}
\sigma_0 \\
\sigma_1 \\
\end{array}
\right) \, =
- 
\left(
\begin{array}{c}
M_2 \\
M_3 \\
\end{array}
\right). 
$$
This system is invertible in the phase space $\Omega$ 
($M_0 M_2 - M_1^2 \neq 0$) and uniquely defines $\sigma_0$ and $\sigma_1$ with respect to ${\bf M}$~:
\begin{equation} \label{eq:sigma01}
\left(
\begin{array}{c}
\sigma_0 \\
\sigma_1 \\
\end{array}
\right) = 
\frac{1}{M_0 M_2 - M_1^2}
\left(
\begin{array}{c}
M_1 M_3 - M_2^2 \\
M_1 M_2 - M_0 M_3 \\
\end{array}
\right).
\end{equation}
Then, we have 
\begin{equation} \label{eq:M4_sigma01M}
\begin{array}{rcl}
 {\overline{M_4}} &=& \rho_1 v_1^4 + \rho_2 v_2^4 \\
 &=&\rho_1 v_1^3 v_1 + \rho_2 v_2^3 v_2 \\  
 &=&(\rho_1 v_1^3 + \rho_2 v_2^3) (v_1 + v_2) - (\rho_1 v_1^2 + \rho_2 v_2^2) v_1 v_2 \\
 &=&-M_2 \sigma_0 - M_3 \sigma_1, \\
\end{array}
\end{equation}
which finally gives ${\overline{M_4}}$ with respect to ${\bf M}$. 
The Jacobian matrix ${\bf J} = \nabla_{{\bf M}} {\bf F}$ is given by 
$$
{\bf J} = \left(
\begin{array}{cccc}
0 & 1 & 0 & 0 \\
0 & 0 & 1 & 0 \\
0 & 0 & 0 & 1 \\
a & b & c & d \\
\end{array}
\right) \quad \mbox{with} \quad
\left\{
\begin{array}{l}
a = \partial_{M_0} {\overline{M_4}}, \\
b = \partial_{M_1} {\overline{M_4}}, \\
c = \partial_{M_2} {\overline{M_4}}, \\
d = \partial_{M_3} {\overline{M_4}}. \\
\end{array}
\right.
$$
Using (\ref{eq:sigma01}) and (\ref{eq:M4_sigma01M}), the calculations of the last row coefficients eventually lead 
to 
$$
{\bf J} = \left(
\begin{array}{cccc}
0 & 1 & 0 & 0 \\
0 & 0 & 1 & 0 \\
0 & 0 & 0 & 1 \\
-\sigma_0^2 & -2\sigma_0 \sigma_1 & -2\sigma_0-\sigma_1^2 & -2\sigma_1 \\
\end{array}
\right).
$$
Finally, the characteristic polynomial $p(\lambda)$ of ${\bf J}$ is easily shown to equal  
\begin{equation*}
p(\lambda) = (\lambda - v_1)^2 (\lambda - v_2)^2.\\[-1.5ex]
\end{equation*}
This concludes the proof. 
\end{proof}

Propositions \ref{prop_quadrature} and \ref{prop_hyper} show that System (\ref{eq:modele_bipic_short})-(\ref{eq:M4})-(\ref{eq:quadrature}) 
is well-defined and weakly hyperbolic only on 
$\Omega$, which gives in particular $v_1 \neq v_2$ in the interior of the moment space. 
At a first sight, this might appear to be 
restrictive in the sense that one of the main objectives of the model is to allow particle trajectory 
crossing, that is in particular to deal with initial data consisting of two colliding particle packets such that 
$v_1=v_2$ at each point initially  (see for instance Section 7). Thus, in the last part of the present section, we characterize the behavior at the frontier 
$\Gamma $ 
of the moment space when $M_0>0$~: $\Gamma =  \{{\bf M} = (M_0,M_1,M_2,M_3)^t, M_0 > 0, M_0 M_2 - M_1^2 = 0\}$.

\subsection{Behavior at the frontier of the moment space}
\label{border}

As mentioned previously, it is rather natural to envision the coexistence, in a single smooth moment solution, of zones where the number of quadrature nodes are different. More specifically, we will examine the coexistence of zones where only one quadrature node is needed ($v_1 = v_2$), that is where 
 ${\bf M} = (M_0,M_1,M_2,M_3)^t$ with $M_0 > 0$ and $M_0 M_2 - M_1^2 = 0$, and zones where $M_0 M_2 - M_1^2>0$ which are inside the 
moment space $\Omega$, whereas the vector of moments are smooth everywhere.

After easy calculations in terms of $\rho_1$, $\rho_2$, $v_1$ and $v_2$, the latter equality $M_0 M_2 - M_1^2 = 0$ writes
$\rho_1 \rho_2 (v_1-v_2)^2=0$, 
so that if the vector $(\rho_1,\rho_2,\rho_1 v_1,\rho_2 v_2)^t$ exists, this actually corresponds to the 
case $v:=v_1=v_2$ (still under the assumption $\rho_1 \neq 0$ and $\rho_2 \neq 0$) or to the case where one of the weights is zero. We also note that in both cases $M_k=M_0 v^k$, whatever $k$ in this case, so that the whole set of moments should be provided once $M_0$ and $M_1$ are given, in close connection  to the case of pressureless gas dynamics.
There are in fact two possibilities~: \\
- either ${\bf M} = (M_0,M_1,M_2,M_3)^t$ is such that $M_0 M_3 - M_1 M_2 \neq 0$~: in this 
case (\ref{eq:quadrature}) cannot be solved and the vector $(\rho_1,\rho_2,\rho_1 v_1,\rho_2 v_2)^t$ does not exist, \\
- or ${\bf M} = (M_0,M_1,M_2,M_3)^t$ is also such that $M_0 M_3 - M_1 M_2 = 0$~: in this case 
(\ref{eq:quadrature}) can be solved and we have $v=v_1=v_2=M_1/M_0$, together with $\rho_1$ and $\rho_2$ 
defined by the one-parameter equation $\rho_1+\rho_2=M_0$. As we will see just below, the 
choice $\rho_1=\rho_2=M_0/2$ is the most natural one when we have to deal with an isolated point at the frontier of the moment space.\\
\ \\
In order to justify the choice $\rho_1=\rho_2=M_0/2$, we first observe that both conditions
\vspace{-0.2cm}
$$
\left\{
\begin{array}{l}
 M_0 M_2 - M_1^2 = 0, \\
 M_0 M_3 - M_1 M_2 = 0,\\[-1ex]
\end{array}
\right.
$$
are equivalent to conditions
$$
\left\{
\begin{array}{l}
 e = 0, \\
 q = 0,
\end{array}
\right., \quad \text{where} \quad
\left\{
\begin{array}{l}
e = M_0 M_2 - M_1^2, \\
 q = (M_3M_0^2-M_1^3)-3M_1(M_0 M_2 - M_1^2),
\end{array}
\right.
$$
and we consider $\rho_1$, $\rho_2$, $v_1$ and 
$v_2$ as functions of $(M_0, M_1, q, e)$ with $M_0>0$, and $e>0$\footnote{The definitions of $e$ and $q$ naturally 
comes out after noticing that setting 
$\overline{\rho}_1 = \frac{\rho_1}{M_0},$
$\overline{\rho}_2 = \frac{\rho_2}{M_0}$, 
$\overline{v}_1 = v_1-\frac{M_1}{M_0}$, 
$\overline{v}_2 = v_2-\frac{M_1}{M_0}$,
solving (\ref{eq:quadrature}) is equivalent to solving
$$
\left\{
\begin{array}{l}
1 = \overline{\rho}_1 + \overline{\rho}_2, \\
0 = \overline{\rho}_1  \overline{v}_1 + \overline{\rho}_2  \overline{v}_2, \\
e = \overline{\rho}_1 \overline{v}_1^2 + \overline{\rho}_2  \overline{v}_2^2, \\
q = \overline{\rho}_1 \overline{v}_1^3 + \overline{\rho}_2 \overline{v}_2^3,
\end{array}
\right.
$$
with $e = (M_0 M_2-M_1^2)/M_0^2$ and $q = ((M_3M_0^2-M_1^3)-3M_1(M_0M_2-M_1^2))/M_0^3$.
}. We
then propose to study the asymptotic behavior of these functions when $e \to 0^+$, considering that $M_0 > 0$, $M_1$ and $q$ 
are fixed. 
Note that  $\Gamma =  \{(M_0,M_1,e,q)^t, M_0 > 0, e = 0\}$.
We get the following result. \\
\begin{lemma} \label{lemma23}
Let be given $M_0 > 0$, $M_1$ and $q$. Then we have 
$$
\lim_{e\to0^+} \rho_2 = 
\left\{
\begin{array}{rcl}
 M_0 & \mbox{if} & q > 0, \\
 0 & \mbox{if} & q < 0, \\
 \frac{M_0}{2} & \mbox{if} & q = 0, \\
\end{array}
\right.
\quad \quad 
\lim_{e\to0^+} \rho_1 = 
\left\{
\begin{array}{rcl}
 0 & \mbox{if} & q > 0, \\
 M_0 & \mbox{if} & q < 0, \\
 \frac{M_0}{2} & \mbox{if} & q = 0, \\
\end{array}
\right.
$$
$$
\lim_{e\to0^+} v_2 = 
\left\{
\begin{array}{rcl}
 \frac{M_1}{M_0} & \mbox{if} & q > 0, \\
 -\infty & \mbox{if} & q < 0, \\
 \frac{M_1}{M_0} & \mbox{if} & q = 0, \\
\end{array}
\right.
\quad \quad 
\lim_{e\to0^+} v_1 = 
\left\{
\begin{array}{rcl}
 +\infty & \mbox{if} & q > 0, \\
 \frac{M_1}{M_0} & \mbox{if} & q < 0, \\
 \frac{M_1}{M_0} & \mbox{if} & q = 0, \\
\end{array}
\right.
$$
$$
\lim_{e\to0^+} \rho_2 v_2 = 
\left\{
\begin{array}{rcl}
 M_1 & \mbox{if} & q > 0, \\
 0 & \mbox{if} & q < 0, \\
 \frac{M_1}{2} & \mbox{if} & q = 0, \\
\end{array}
\right.
\quad \quad 
\lim_{e\to0^+} \rho_1 v_1 = 
\left\{
\begin{array}{rcl}
 0 & \mbox{if} & q > 0, \\
 M_1 & \mbox{if} & q < 0, \\
 \frac{M_1}{2} & \mbox{if} & q = 0. \\
\end{array}
\right.
$$
\end{lemma}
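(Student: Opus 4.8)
The plan is to work entirely in the normalized variables $(\overline{\rho}_1,\overline{\rho}_2,\overline{v}_1,\overline{v}_2)$ of the footnote, so that the problem becomes that of a centered two-point distribution of unit mass with prescribed variance $e$ and third moment $q$, and to undo the scaling $\rho_i = M_0\overline{\rho}_i$ and centering $v_i = \overline{v}_i + M_1/M_0$ only at the very end. First I would solve the reduced system explicitly. Writing $p = \overline{\rho}_1$, the first two equations give $\overline{\rho}_2 = 1-p$ and $\overline{v}_2 = -p\,\overline{v}_1/(1-p)$; substituting into the variance equation yields $e = p\,\overline{v}_1^2/(1-p)$, hence $\overline{v}_1 = \sqrt{e(1-p)/p}$ (the positive root, since the convention $v_1 > v_2$ forces $\overline{v}_1 > 0 > \overline{v}_2$). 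Carrying this into the third-moment equation then produces the single governing identity
\[
q \;=\; e^{3/2}\,\frac{1-2p}{\sqrt{p(1-p)}},
\]
in which the right-hand side, as a function of $p \in (0,1)$, is a strictly decreasing bijection onto $\mathbb{R}$; this both recovers the uniqueness of Proposition~\ref{prop_quadrature} and determines $p$ from $(e,q)$.

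The heart of the argument is then a one-variable asymptotic analysis of this identity as $e \to 0^+$ with $M_0 > 0$, $M_1$, $q$ held fixed. Since reflecting the velocity about the mean $M_1/M_0$ preserves $M_0, M_1, e$ but sends $q \to -q$ while swapping the two node labels, it suffices to treat the cases $q = 0$ and $q > 0$. When $q = 0$ the identity forces $p = \tfrac12$ for every $e > 0$, so $\rho_1 = \rho_2 = M_0/2$ identically and $\overline{v}_1 = -\overline{v}_2 = \sqrt{e} \to 0$, from which all six limits follow at once. When $q > 0$ the left-hand side is fixed and positive while $e^{3/2} \to 0$, so the factor $(1-2p)/\sqrt{p(1-p)}$ must blow up; since its numerator stays bounded this forces $p \to 0^+$, and expanding near $p = 0$ gives the sharp rate $q \sim e^{3/2}/\sqrt{p}$, i.e.\ $p \sim e^3/q^2$.

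This rate is what unlocks the remaining limits: $\overline{v}_1 = \sqrt{e(1-p)/p} \sim q/e \to +\infty$ while $\overline{v}_2 = -p\,\overline{v}_1/(1-p) \sim -e^2/q \to 0$. Translating back through $\rho_i = M_0\overline{\rho}_i$ and $v_i = \overline{v}_i + M_1/M_0$ gives $\rho_1 \to 0$, $\rho_2 \to M_0$, $v_1 \to +\infty$, $v_2 \to M_1/M_0$ directly, and the case $q < 0$ then follows by the reflection symmetry. I expect the only genuine obstacle to be the products $\rho_1 v_1$ and $\rho_2 v_2$, which are of indeterminate form $0 \cdot \infty$ because the vanishing node has a weight tending to $0$ and an abscissa tending to $+\infty$ simultaneously. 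These cannot be resolved from the individual limits of $\rho_1$ and $v_1$ and instead hinge on the precise scaling: $\rho_1 v_1 = M_0\big(p\,\overline{v}_1 + p\,M_1/M_0\big)$ with $p\,\overline{v}_1 \sim e^2/q \to 0$ forces $\rho_1 v_1 \to 0$, whereas $\rho_2 v_2 = M_0(1-p)v_2 \to M_1$. Once this scaling is in hand, everything else is bookkeeping.
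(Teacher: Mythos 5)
Your proposal is correct, and the computations in it check out: the governing identity $q = e^{3/2}(1-2p)/\sqrt{p(1-p)}$ does follow from the centered system, its right-hand side is strictly decreasing from $+\infty$ to $-\infty$ on $(0,1)$, and the asymptotics $p \sim e^3/q^2$, $\overline{v}_1 \sim q/e$, $\overline{v}_2 \sim -e^2/q$ deliver all the stated limits, with the reflection $v \mapsto 2M_1/M_0 - v$ disposing of $q<0$. (One bookkeeping caveat: you work throughout with the footnote's normalized $e,q$, which are the lemma's $e,q$ divided by $M_0^2$ and $M_0^3$; since $M_0>0$ is fixed this changes nothing, and the combination $q/e^{3/2}$ is identical in both conventions.) Your route is, however, organized differently from the paper's. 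The paper solves the quadrature system in closed form, $v_1 = M_1/M_0 + \bigl(q+\sqrt{q^2+4e^3}\bigr)/(2M_0e)$ and $\rho_2 = M_0\,e\,(v_1M_0-M_1)/\sqrt{q^2+4e^3}$, and reads the limits off directly in the three cases $q>0$, $q<0$, $q=0$, using $sign(q)$ to treat the two nonzero signs in parallel. Your implicit identity is exactly the inverse of those formulas: solving it for the weight fraction gives $p = \frac{1}{2}\bigl(1 - r/\sqrt{r^2+4}\bigr)$ with $r = q/e^{3/2}$, which is precisely the paper's expression for $\omega_1$ appearing in the proof of Proposition~\ref{cone}. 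So the two arguments are algebraically equivalent, but yours trades the closed-form limit evaluation for a monotonicity argument plus an asymptotic rate for the inverse, and buys in exchange a transparent uniqueness statement (recovering Proposition~\ref{prop_quadrature} along the way) and a symmetry that halves the casework; the paper's explicit formulas make the limits more immediate at the cost of unexplained algebra. One simplification you missed: the products, which you flag as the only genuine obstacle, need no rate analysis at all, since exactly $p\,\overline{v}_1 = \sqrt{e\,p(1-p)} \le \sqrt{e}/2 \to 0$; this resolves the $0\cdot\infty$ form for $\rho_1 v_1$ uniformly in $q$, and $\rho_2 v_2 = M_1 - \rho_1 v_1$ then follows by momentum conservation.
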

\begin{proof} The admissible change of variables $(M_0,M_1,M_2,M_3) \to (M_0,M_1,e,q)$ allows to write after easy calculations
$$
v_1 = \frac{M_1}{M_0} + \frac{q +\sqrt{q^2+4e^3} }{2M_0e}, \quad
\rho_2 = \frac{M_0\, e(v_1M_0-M_1)}{\sqrt{q^2+4e^3}},
$$
and if $q\neq0$
$$
v_1 = \frac{M_1}{M_0} + \frac{q}{M_0\,e}\,\frac{\big( 1+sign(q)\sqrt{1+4e^3/q^2} \big)}{2}, \quad
\rho_2 = \frac{M_0\,e}{q}\,\left(\frac{v_1M_0-M_1}{sign(q)\sqrt{1+ 4e^3/q^2}}\right),
$$
where we have set
\vspace{-0.5cm}
$$
sign(q) = 
\left\{
\begin{array}{rcl}
 1 & \mbox{if} & q > 0, \\
 -1 & \mbox{if} & q < 0.
\end{array}
\right.
$$
It is then an easy matter to get the expected results distinguishing between the three cases $q<0$, $q>0$ 
and $q=0$. It is then clear by a continuity argument that the proposed choice $\rho_1=\rho_2=M_0/2$ when $e=q=0$ 
is actually natural.
\end{proof}

An important consequence of this lemma is that in the half plane $e>0$, the region close to the frontier $\Gamma$ {\it for a non-zero $q$} corresponds to abscissas going to infinity with arbitrary small weights. Moreover,  when the velocity distributions at the kinetic level have compact support in the initial distribution, such a property will be preserved in the dynamics of the system 
and we want to be able to switch continuously from two-node to one-node quadrature without pathological behavior on  abscissas and weights.

Let us provide a first example where such a behavior is present. We consider a path in the moment space parametrized by the variable $x$, such that $\rho_1=x^3$,  $\rho_2=1$, $v_1=1/x$ and $v_2=0$. As $x$ approaches zero, the smooth moment vector ${\bf M} = (1+x^3,x^2,x,1)^t$ has a very regular limit at the frontier of the moment space along the lines presented before with an unbounded abscissa. Indeed we have here $e=x$ and $q=1-x^3$ approaches the fixed non-zero value of $1$.
 Note that if we replace the first weight by $\rho_1=x^4$, we still have an unbounded abscissa even if we converge toward the 
point $(0,0)$ in the $(e,q)$ plane ($e=x^2, q=x(1-x^4)$).

We thus have to find a framework in a subset of the plane $(e,q)$ such that the limits are better behaved. A natural choice presented above is the line $q=0$ but it is too restrictive. In order to naturally introduce 
the relevant subset of $\Omega$, let us consider the other example with  $\rho_1=\alpha x^\beta$,  
$\rho_2=1$, $v_1=\gamma x^\delta$ and $v_2=0$, with $\alpha>0$, $\gamma>0$, $\beta\ge0$ and $\delta\ge0$. As $x$ approaches zero, the moment vector ${\bf M} = (1+\alpha x^\beta,\alpha\gamma x^{\beta+\delta},\alpha\gamma^2 x^{\beta+2\delta},\alpha\gamma^3 x^{\beta+3\delta})^t$ 
reaches the frontier $\Gamma$ of the moment space\footnote{The corresponding values of $e$ and $q$ are $e=\alpha \gamma^2 x^{\beta+2\delta}$ and $q=\alpha\gamma^3 x^{\beta + 3\delta}(1-\alpha x^\beta)$.}. Two cases are interesting; firstly when $\beta=0$, we reach the point $(0,0)$ in the $(e,q)$ plane asymptotically along the line $q/e^{3/2}=(1-\alpha)/\alpha^{1/2}$ and no weight is approaching zero, whereas the two abscissas are joining (see formulas in the proof above). 
Secondly, when $\delta=0$, one of the weights is reaching zero, whereas the two abscissas remain different at a distance of $\gamma$ at the limit and we reach the point $(0,0)$ in the $(e,q)$ plane asymptotically along the line $q/(M_0\,e)=\gamma$ 
at the limit $x \to 0$ (see again formulas in the proof above). 
We will prove in the following proposition that the proper framework is a symmetric cone in the $(e,q)$ plane centered at the point $(0,0)$ corresponding the $\vert q/(M_0\,e) \vert \le \eta$, where $\eta$ is a measure of the maximal distance allowed between the two abscissas.\\

\begin{definition}[Regular path]
We define a regular path parametrized by $x$ in the moment space, ${\bf M}_x = (M_{0\,x}, M_{1\,x}, M_{2\,x} M_{3\,x})$ which admits a limit as $x$ goes to zero and is at least ${\mathcal C}^1$ up to 
the limit  ${\bf M}_0$. Moreover, we define its reduced second and third order moments $e_x = M_{0\,x}M_{2\,x} -M_{1\,x}^2$ and $q_x = (M_{3\,x}M_{0\,x}-M_{1\,x}^3) - 3M_{1\,x}
(M_{0\,x}M_{2\,x}-M_{1\,x}^2)$.
Its limit further satisfies $e_0=e_{x=0}=0$ and we assume $e_{x>0} >0$,  $M_{0\,x}>\nu>0$, $\vert q_x/(M_{0\,x}\,e_x)\vert \le \eta$, where $\eta>0$. 
\end{definition}
\\
\begin{proposition}\label{cone}
We then have the following properties~:
\begin{itemize}
\item $\lim_{x\to 0} q_x =q_0= 0$.
\item the weights and abscissas admit limits $\rho_{i\,0} = \lim_{x\to 0} \rho_{i\,x}$, $v_{i\,0} = \lim_{x\to 0} v_{i\,x}$. If we assume that $\rho_{i\,0}>0$ for both $i$, then $v_{1\,0} =v_{2\,0}$, or, if one weight approaches zero, such as   $\rho_{1\,0}=0$ then we have $\vert v_{1\,0}-v_{2\,0}\vert \le \eta$ and $\eta$ is then a bound on the distance between the two abscissas.
\item ${\bf M}_0 = (M_{0\,0}, M_{1\,0}, M_{1\,0}^2/M_{0\,0},M_{1\,0}^3/M_{0\,0}^2)^t$.
\end{itemize}
\end{proposition}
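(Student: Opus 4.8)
The plan is to reduce all three statements to the closed-form expressions for the abscissas and weights established in the proof of Lemma~\ref{lemma23}, and then to read off the conclusions from the behaviour of two scalar quantities attached to the path. Throughout, the change of variables $(M_0,M_1,M_2,M_3)\mapsto(M_0,M_1,e,q)$ is admissible along the path for $x>0$ by Proposition~\ref{prop_quadrature}, and since ${\bf M}_x$ is $\mathcal{C}^1$ up to the limit while $e$ and $q$ are polynomials in the moments, the scalar maps $x\mapsto e_x$ and $x\mapsto q_x$ are continuous with $e_x\to e_0=0$ and $q_x\to q_0$ for some limit $q_0$.

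I would first dispatch the first item. The cone hypothesis $\lvert q_x/(M_{0\,x}e_x)\rvert\le\eta$ reads $\lvert q_x\rvert\le\eta\,M_{0\,x}\,e_x$; since $M_{0\,x}$ converges (hence is bounded) and $e_x\to0$, the right-hand side tends to $0$, forcing $q_0=0$. Next I would set up the working formulas: writing $r_x=q_x/(M_{0\,x}e_x)$, so that $\lvert r_x\rvert\le\eta$, and $s_x=\sqrt{r_x^2+4e_x/M_{0\,x}^2}$, the identity $\sqrt{q^2+4e^3}=M_{0\,x}e_x\,s_x$ recasts the formulas of the proof of Lemma~\ref{lemma23} as
\[
v_{1,2}=\frac{M_{1\,x}}{M_{0\,x}}+\tfrac12\bigl(r_x\pm s_x\bigr),\qquad \rho_{2,1}=\frac{M_{0\,x}}{2}\Bigl(1\pm\frac{r_x}{s_x}\Bigr),
\]
whence $v_1-v_2=s_x$, $\rho_1+\rho_2=M_{0\,x}$, and $\lvert r_x/s_x\rvert\le1$ (recovering $0<\rho_i<M_{0\,x}$). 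Since $s_x^2-r_x^2=4e_x/M_{0\,x}^2\to0$ and $M_{0\,x}>\nu$, one gets $\limsup_{x\to0}s_x\le\eta$, i.e.\ the gap between the abscissas stays bounded by $\eta$ in the limit.

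The case analysis of the second item then follows from the behaviour of $r_x/s_x\in[-1,1]$. If both weights tend to positive limits, then $r_x/s_x\to c\in(-1,1)$, and combining this with the identity $1-(r_x/s_x)^2=4e_x/(M_{0\,x}^2 s_x^2)$ forces $s_x\to0$; hence $v_1-v_2\to0$, and since $\lvert r_x\rvert\le s_x$ also $r_x\to0$, giving $v_{1\,0}=v_{2\,0}=M_{1\,0}/M_{0\,0}$. If instead one weight vanishes, say $\rho_{1\,0}=0$, then $r_x/s_x\to1$ and $\lvert v_{1\,0}-v_{2\,0}\rvert=\lim_{x\to0}s_x\le\eta$, which is the announced bound. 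The third item is then immediate from $e_0=q_0=0$: the relation $e_0=0$ gives $M_{2\,0}=M_{1\,0}^2/M_{0\,0}$, after which $q_0=0$ gives $M_{3\,0}M_{0\,0}^2-M_{1\,0}^3=0$, i.e.\ $M_{3\,0}=M_{1\,0}^3/M_{0\,0}^2$.

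The hard part is not the algebra but the existence of the limits claimed in the second item: both $s_x$ and $r_x/s_x$ are genuine $0/0$ indeterminacies as $e_x\to0$, and $\mathcal{C}^1$ regularity controls them only through the leading derivatives of $e_x$ and $q_x$ at $x=0$. The delicate situation is the degenerate one where these leading terms vanish; there the cone constraint $\lvert r_x\rvert\le\eta$ must be exploited together with a finer expansion of the path to ensure that $r_x$ (and thus $s_x$, $r_x/s_x$, and in turn the abscissas and weights) actually converges rather than merely remaining bounded. This convergence is precisely where the full strength of the regular-path hypotheses is used, and it is the step I would expect to demand the most care.
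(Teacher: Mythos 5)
Your proof follows essentially the same route as the paper's: the cone bound $\lvert q_x\rvert\le\eta M_{0\,x}e_x$ gives $q_0=0$, the explicit quadrature formulas from the proof of Lemma~\ref{lemma23} (your $r_x$ and $s_x$ are a clean reparametrization of the paper's $q/(M_0 e)$ and $v_1-v_2=\sqrt{q^2/(M_0^2e^2)+4e/M_0^2}$) yield the same dichotomy between the two-positive-weights case and the vanishing-weight case, and the third item follows from $e_0=q_0=0$ exactly as in the paper. The existence-of-limits subtlety you flag at the end is likewise passed over in the paper's own proof (which simply denotes $l=\lim_{x\to0}\lvert q_x\rvert/(M_{0\,x}e_x)$ as if it exists and, in the case $l=0$, lets the weight limits depend on that of $q/e^{3/2}$), so your attempt is, if anything, more explicit about that delicate point.
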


\begin{proof}
It is first clear that $\lim_{x\to 0} q_x =q_0= 0$ since 
$\vert q_x \vert \le \eta M_{0\,x}\,e_x$ and $e_0=0$. Then, easy calculations give 
\begin{equation*}
v_1-v_2 = \sqrt{\frac{q^2}{M_0^2 e^2} + \frac{4e}{M_0^2}}\\[-1ex]
\end{equation*}
so that denoting $l=\lim_{x \to 0} \vert q_x\vert /(M_{0x}e_x) \geq 0$, we clearly have $ v_{1x}-v_{2x} \to l$ when 
$x \to 0$ and $\eta$ represents an upper bound for $ v_{1\,0} - v_{2\,0}$. Let us now distinguish between 
the cases $l > 0$ and $l=0$. We first note the following expression for $q/(M_0\, e)$~:
$$
\frac{q}{M_0\,e} = (\omega_1-\omega_2)(v_2-v_1), \quad \omega_i=\rho_i/M_0, \quad \omega_1+\omega_2=1.
$$
If $l > 0$, one can write  
\vspace{-0.2cm}
$$
\vert \rho_{1\,x}- \rho_{2\,x} \vert = \vert \frac{q_x}{M_{0\,x}\,e_x} \vert \times \frac{1}{\vert v_{1\,x}-v_{2\,x} \vert} \times M_{0\,x}, 
$$
and this quantity clearly tends to $M_{0\,0}$ as $x$ goes to zero. Which means that one weight approaches zero, 
$\rho_{1\,0}=0$ or $\rho_{2\,0}=0$, and the other 
one $M_{0\,0}$ (with $\rho_{1\,x}+\rho_{2\,x}=M_{0\,x}$). \\
If $l = 0$, it is clear by the following formula for $v_1$ (see the proof above) 
$$
v_1 = \frac{M_1}{M_0} + \frac{q +\sqrt{q^2+4e^3} }{2M_0e}
$$
that $v_{1\,0} =v_{2\,0} = M_{1\,0} / M_{0\,0}$. Using now the definition of $\rho_1$ and $\rho_2$ (see again 
the proof above), one easily get 
$$
\omega_2 = \frac{\rho_2}{M_0} = \frac{\sqrt{1+4e^3/q^2} + sign(q)}{2\sqrt{1+4e^3/q^2}}, \quad
\omega_1 = \frac{\rho_1}{M_0} = \frac{\sqrt{1+4e^3/q^2} - sign(q)}{2\sqrt{1+4e^3/q^2}},
$$
so that both weights have limits depending on the limit of $q/e^{3/2}$. \\
Clearly, ${\bf M}_0 = (M_{0\,0}, M_{1\,0}, M_{1\,0}^2/M_{0\,0},M_{1\,0}^3/M_{0\,0}^2)^t$, 
which completes the proof.
\end{proof}
%
%
%
%

A very important consequence of the previous proposition is the fact that along smooth paths inside the proposed cone which reach the point $(0,0)$ in the $(e,q)$ plane, the flux introduced in equation \ref{eq:modele_bipic_short} is regular up to the frontier of the moment space, even if the mapping of ${\bf M}$ onto ${\bf U}$  is not\footnote{The  mapping of ${\bf M}$ onto ${\bf U}$ will never be smooth at point $(0,0)$ in the $(e,q)$ plane since the limit of ${\bf U}$ will depend on the limit of $q/(M_0 e)$, whereas the limit value of ${\bf M}$ in the proposed cone is always fixed.}

\begin{proposition}\label{regular_flux}
For any regular path in the moment space satisfying the assumptions of the previous proposition, that is living in the proper cone in the $(e,q)$ plane and reaching smoothly the point $(0,0)$, the flux ${\bf F}({\bf M})$ is continuous  up to the frontier of the moment space and ${\mathcal C}^1$ at $(0,0)$ in any direction inside the proposed cone.
\end{proposition}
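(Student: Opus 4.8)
The plan is to reduce everything to the single component $\overline{M_4}$. The first three components of ${\bf F}({\bf M})$ are just $M_1,M_2,M_3$, which are polynomials in ${\bf M}$ and are therefore automatically continuous and ${\mathcal C}^1$ along any regular (hence ${\mathcal C}^1$) path up to $x=0$. All the difficulty is concentrated in $\overline{M_4}$, whose expression $(\ref{eq:M4_sigma01M})$–$(\ref{eq:sigma01})$ becomes singular as $e=M_0M_2-M_1^2\to0$. First I would derive a workable closed form in the coordinates $(M_0,M_1,e,q)$: combining $(\ref{eq:M4_sigma01M})$ and $(\ref{eq:sigma01})$ gives $\overline{M_4}=(M_2^3-2M_1M_2M_3+M_0M_3^2)/e$, and substituting $M_2=(e+M_1^2)/M_0$, $M_3=(q+M_1^3+3M_1e)/M_0^2$ (equivalently, passing through the reduced central moments of the footnote) collapses this after elementary algebra into
\begin{equation*}
\overline{M_4}=\frac{M_1^4+6M_1^2e+4M_1q+e^2}{M_0^3}+\frac{q^2}{M_0^3\,e}.
\end{equation*}
The first summand is a polynomial over $M_0^3$, hence ${\mathcal C}^\infty$ on $\{M_0\ge\nu>0\}$ and harmless, so the whole analysis reduces to the single term $\Phi:=q^2/(M_0^3\,e)$.

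For continuity, the cone condition $\vert q\vert\le\eta\,M_0\,e$ is exactly what is needed: it gives $0\le\Phi\le\eta^2 e/M_0\le\eta^2 e/\nu$, so $\Phi\to0$ along the path since $e_0=0$ and $q_0=0$ by Proposition \ref{cone}. Hence $\overline{M_4}\to M_{1\,0}^4/M_{0\,0}^3$, which is precisely the value prescribed by the limit ${\bf M}_0=(M_{0\,0},M_{1\,0},M_{1\,0}^2/M_{0\,0},M_{1\,0}^3/M_{0\,0}^2)$ of Proposition \ref{cone}, and ${\bf F}$ extends continuously up to $\Gamma$.

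For the ${\mathcal C}^1$ statement I would read ``in any direction inside the cone'' as: along any ray $(e,q)=t(\dot e,\dot q)$ with $\dot e>0$ and $\vert\dot q\vert\le\eta M_0\dot e$, and more generally along any regular path meeting $\Gamma$ transversally ($e'(0)>0$). Along such a ray $\Phi=t\,\dot q^2/(M_0^3\dot e)$ is linear in $t$, hence ${\mathcal C}^1$ with derivative $\dot q^2/(M_0^3\dot e)$. For a transversal ${\mathcal C}^1$ path one differentiates $\Phi({\bf M}_x)$ and checks, using $e_x\sim\dot e_0 x$ and $q_x\sim\dot q_0 x$, that the two a priori singular contributions $-q_x^2 e'_x/(M_{0\,x}^3 e_x^2)$ and $2q_xq'_x/(M_{0\,x}^3 e_x)$ combine into the finite limit $\dot q_0^2/(M_{0\,0}^3\dot e_0)$; the remaining piece $-3M_0'q^2/(M_0^4e)$ vanishes because $q^2/e\to0$. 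This yields the ${\mathcal C}^1$ regularity of ${\bf F}$ up to $\Gamma$ in every admissible direction.

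The hard part is precisely the term $\Phi=q^2/(M_0^3 e)$: as a genuine function of two variables it is \emph{not} differentiable at $(e,q)=(0,0)$, since its partials $\partial_e\Phi=-q^2/(M_0^3e^2)$ and $\partial_q\Phi=2q/(M_0^3e)$ are only bounded in the cone (by $\eta^2/\nu$ and $2\eta/\nu^2$) but have no limit there, as they depend on the ratio $q/e$. Thus ${\bf F}$ is not Fréchet-${\mathcal C}^1$ at the frontier, and both the cone restriction $\vert q/(M_0e)\vert\le\eta$ and the directional reading of ``${\mathcal C}^1$'' are indispensable — this is the analytic counterpart of the footnote's remark that ${\bf M}\mapsto{\bf U}$ is never smooth at $(0,0)$ whereas ${\bf F}$, being a function of ${\bf M}$ alone, is.
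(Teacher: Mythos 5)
Your proposal is correct and follows essentially the same route as the paper: both pass to the variables $(M_0,M_1,e,q)$, write $\overline{M_4}$ explicitly in these variables, and exploit the cone bound $\vert q\vert\le\eta\,M_0\,e$ together with $M_0\ge\nu>0$ to control the only singular contribution as $e\to0^+$. Your expansion isolating the single singular term $q^2/(M_0^3\,e)$ is a clean simplification of the paper's unexpanded product formula for $\overline{M_4}/M_0$ (where the limit $(M_{1\,0}/M_{0\,0})^4$ emerges only after cancellation of the $q/(M_0 e)$-dependent terms), and your directional ${\mathcal C}^1$ computation along transversal paths, together with the observation that ${\bf F}$ fails to be Fr\'echet-${\mathcal C}^1$ at $(0,0)$ so that the directional reading of the statement is essential, makes rigorous what the paper dispatches with ``following the same lines \dots\ it becomes clear''.
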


\begin{proof}
The proof is rather straightforward when one has noticed the two equations, the first of which is the expression (\ref{eq:M4_sigma01M}) of $\overline{M_4} = -\sigma_0\,M_2-\sigma_1\,M_3$ as a function of $M_2$, $M_3$,  $\sigma_0$ and $\sigma_1$, and the second is the expression of $\sigma_0$ and $\sigma_1$~:
$$
\left(
\begin{array}{c}
\sigma_0 \vphantom{\displaystyle \frac{q}{M_0\,e}}\\ 
\sigma_1  \vphantom{\Biggl(}\\
\end{array}
\right)
=
\left(
\begin{array}{c}
\displaystyle \frac{q}{M_0\,e}\frac{M_1}{M_0} + \left(\frac{M_1}{M_0}\right)^2- \frac{e}{M_0^2} \\ 
\displaystyle -\frac{q}{M_0\,e} - 2\,\frac{M_1}{M_0}  \vphantom{\Biggl(}
\end{array}
\right).
$$
Finally, following the same lines for the evaluation of the Jacobian matrix of the flux (see matrix $J$ in section 2.2), it becomes clear that the flux is continuous and continuously differentiable in any direction inside the proposed cone. Besides, it can be easily seen that the expression of the flux as a function of $(M_0,M_1,e,q)^t$ becomes
\begin{equation*}
\begin{split}
\frac{\overline{M_4}}{M_0}= -\left( \frac{q}{M_0\,e}\,\frac{M_1}{M_0} + \left(\frac{M_1}{M_0}\right)^2 -  \frac{e}{M_0^2}\right)\left( \frac{e}{M_0^2} + \left(\frac{M_1}{M_0}\right)^2\right) +\\ 
\left( \frac{q}{M_0\,e} + 2\,\frac{M_1}{M_0} \right) \left( \frac{q}{M_0^3} +  \left(\frac{M_1}{M_0}\right)^3 + 3 \frac{M_1}{M_0}\, \frac{e}{M_0^2}\right).
\end{split}
\end{equation*}
%
%
$\overline{M_4}/M_0$ tends to $(M_{10}/M_{00})^4$ when $e$ goes to $0^+$ which concludes the proof.
\end{proof}

\begin{rem}
Let us emphasize that in the various configurations we have proposed when the convergence toward the frontier of the moment space does not lie inside the cone $\vert q_x/(M_{0\,x}\,e_x)\vert \le \eta$ in the $(e,q)$ plane, the flux can dramatically loose regularity. It can either have a limit without being differentiable or even not have a limit at all. The impact of the previous proposition thus becomes clear and sets the proper framework for solutions which will reach the frontier of the moment space.
\end{rem}

\begin{rem}
In the case $M_0 > 0$, at the frontier of the moment space within the previous proposed framework, we have 
$e=0$, $q=0$; the model is made of the two 
unknowns $M_0$ and $M_1$ and then degenerates to the usual pressureless gas dynamics which is weakly 
hyperbolic with a single eigenvalue $v=M_1/M_0$. It should be noticed that for smooth solutions, the last two equations of system \ref{eq:modele_bipic} on $M_2 = M_1^2/M_0$ and $M_3= M_1^3/M_0^2$ are still satisfied with  $\overline{M_4} = M_1^4/M_0^3$ coherent with the previous limit obtained for the flux. As a consequence, we can notice, that at least for smooth solutions, the system of partial differential equations (\ref{eq:modele_bipic}) can describe the dynamics inside and at the frontier of the moment space.
\end{rem}

\section{Entropy conditions}

In this section, we will work in the interior of the moment space and we exhibit natural entropy inequalities for the following
small viscosity system associated with (\ref{eq:modele_bipic})~:
\begin{equation} \label{eq:modele_bipic_visqueux}
\left\{
\begin{array}{l}
\partial_t M_0 + \partial_x M_1 = \varepsilon \partial_{xx} M_0, \\
\partial_t M_1 + \partial_x M_2 = \varepsilon \partial_{xx} M_1, \\
\partial_t M_2 + \partial_x M_3 = \varepsilon \partial_{xx} M_2, \\
\partial_t M_3 + \partial_x M_4 = \varepsilon \partial_{xx} M_3, 
\end{array}
\right. 
\end{equation}
which gives in condensed form
\begin{equation} \label{eq:modele_bipic_visqueux_short}
\partial_t {\bf M} + \partial_x {\bf F}({\bf M}) = \varepsilon \partial_{xx} {\bf M}.
\end{equation}
Throughout this section, we will consider smooth solutions only. We thus have
$$
\partial_t {\bf M} + {\bf J} \partial_x {\bf M} = \varepsilon \partial_{xx} {\bf M} \quad
\mbox{with} \quad {\bf J} = \nabla_{{\bf M}} {\bf F}.
$$
Setting ${\bf A} = \nabla_{{\bf U}} {\bf M}$, we then get
\begin{equation} \label{eq:edp_U}
\partial_t {\bf U} + {\bf A}^{-1} {\bf J} {\bf A} \partial_x {\bf U} = \varepsilon 
{\bf A}^{-1} \partial_{x} ({\bf A} \partial_{x} {\bf U}).
\end{equation}
Our objective is to prove that
\vspace{-0.3cm}
\begin{equation} \label{eq:ineq_entropy_visqueux}
 \partial_t \eta + \partial_x q \leq \varepsilon 
\partial_{xx} \eta,
\end{equation}
for a natural choice of couple $(\eta,q)$ given by
\begin{equation} \label{eq:couple_entropy_flux}
\left\{
\begin{array}{l}
\eta = \rho_1 S(v_1) + \rho_2 S(v_2), \\
q = \rho_1 v_1 S(v_1) + \rho_2 v_2 S(v_2).
\end{array}
\right.
\end{equation}
Here $S$ denotes a convex function from $\mathbb{R}$ to $\mathbb{R}$, and we will especially consider 
the case where $S(v) = v^{2 \alpha}$, $\alpha \geq 0$. Of course, the densities $\rho_1$, $\rho_2$ and 
velocities $v_1$, $v_2$ involved in (\ref{eq:couple_entropy_flux}) are naturally defined by means of the one-to-one and onto function ${\bf U} = {\bf U}({\bf M})$. In the following and with a little abuse in the notations, we will consider 
without distinction $\eta$ and $q$ as functions of ${\bf M}$ or ${\bf U}$.\\
\ \\
We first observe 
$$
\begin{array}{rcl}
 \partial_t \eta + \partial_x q &=& \partial_t \eta({\bf U}) + \partial_x q({\bf U}) \\
 &=& \nabla_{{\bf U}} \eta \partial_t {\bf U} + \nabla_{{\bf U}} q \partial_x {\bf U} \\
 &=& \nabla_{{\bf U}} \eta \{ - {\bf A}^{-1} {\bf J} {\bf A} \partial_x {\bf U} 
+ \varepsilon {\bf A}^{-1} \partial_x ({\bf A} \partial_x {\bf U})\} +
\nabla_{{\bf U}} q \partial_x {\bf U}.
\end{array}
$$
The following two lemmas, the proofs of which are left to the reader, will be useful in order to estimate the entropy dissipation rate ${\bf D}$ defined 
by 
$$
{\bf D} = \nabla_{{\bf U}} \eta \{ - {\bf A}^{-1} {\bf J} {\bf A} \partial_x {\bf U} 
+ \varepsilon {\bf A}^{-1} \partial_x ({\bf A} \partial_x {\bf U})\} +
\nabla_{{\bf U}} q \partial_x {\bf U}.
$$

\begin{lemma}
 The matrices ${\bf J}$ and ${\bf A}$ and ${\bf A}^{-1} {\bf J} {\bf A}$ are given by
$$
{\bf J} = 
\left(
\begin{array}{cccc}
 0 & 1 & 0 & 0 \\
 0 & 0 & 1 & 0 \\
 0 & 0 & 0 & 1 \\
 -v_1^2 v_2^2 & 2 v_1 v_2 (v_1+v_2) & -2v_1v_2-(v_1+v_2)^2 & 2(v_1+v_2) \\
\end{array}
\right),
$$ 
$$
{\bf A} = 
\left(
\begin{array}{cccc}
 1 & 1 & 0 & 0 \\
 0 & 0 & 1 & 1 \\
 -v_1^2 & -v_2^2 & 2v_1 & 2v_2 \\
 -2v_1^3 & -2 v_2^3  & 3v_1^2 & 3v_2^2 \\
\end{array}
\right), \quad {\bf A}^{-1} {\bf J} {\bf A}= 
\left(
\begin{array}{cccc}
 0 & 0 & 1 & 0 \\
 0 & 0 & 0 & 1 \\
 -v_1^2 & 0 & 2v_1 & 0 \\
 0 & -v_2^2  & 0 & 2v_2 \\
\end{array}
\right).
$$ 
\end{lemma}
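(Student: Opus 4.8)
The plan is to establish the three matrix formulas in turn, reserving the only genuine algebra for the similarity transform. The expression for ${\bf J}$ costs nothing new: it is precisely the Jacobian $\nabla_{\bf M}{\bf F}$ already obtained in the proof of Proposition \ref{prop_hyper}, once I substitute $\sigma_0=v_1v_2$ and $\sigma_1=-(v_1+v_2)$ into its last row. Line by line, $-\sigma_0^2=-v_1^2v_2^2$, $-2\sigma_0\sigma_1=2v_1v_2(v_1+v_2)$, $-2\sigma_0-\sigma_1^2=-2v_1v_2-(v_1+v_2)^2$ and $-2\sigma_1=2(v_1+v_2)$, which reproduces the stated ${\bf J}$.

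For ${\bf A}=\nabla_{\bf U}{\bf M}$ I would first write the moments explicitly in the variables ${\bf U}=(\rho_1,\rho_2,\rho_1v_1,\rho_2v_2)^t$. Writing $U_1=\rho_1$, $U_2=\rho_2$, $U_3=\rho_1v_1$, $U_4=\rho_2v_2$, so that $v_1=U_3/U_1$ and $v_2=U_4/U_2$, one has $M_0=U_1+U_2$, $M_1=U_3+U_4$, $M_2=U_3^2/U_1+U_4^2/U_2$ and $M_3=U_3^3/U_1^2+U_4^3/U_2^2$. Differentiating each $M_i$ in $U_1,\dots,U_4$ and rewriting the result with $v_1,v_2$ then gives the rows $(1,1,0,0)$, $(0,0,1,1)$, $(-v_1^2,-v_2^2,2v_1,2v_2)$ and $(-2v_1^3,-2v_2^3,3v_1^2,3v_2^2)$ of ${\bf A}$; this is a direct computation.

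The third formula is the only one requiring care, and the clean route is to avoid inverting ${\bf A}$. Denoting by ${\bf B}$ the claimed value of ${\bf A}^{-1}{\bf J}{\bf A}$, I would instead verify the equivalent identity ${\bf J}{\bf A}={\bf A}{\bf B}$, which is legitimate because ${\bf A}$ is invertible on $\Omega$ by Proposition \ref{prop_quadrature} (the map ${\bf U}={\bf U}({\bf M})$ being one-to-one and onto). The first three rows are immediate: the upper $3\times 4$ part of ${\bf J}$ acts as a shift, so rows $1,2,3$ of ${\bf J}{\bf A}$ are rows $2,3,4$ of ${\bf A}$, and these are checked against ${\bf A}{\bf B}$ at a glance. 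The only substantive step is the fourth row, obtained by pairing the last row of ${\bf J}$ with the four columns of ${\bf A}$; after cancellation this yields $(-3v_1^4,-3v_2^4,4v_1^3,4v_2^3)$, which must coincide with the fourth row of ${\bf A}{\bf B}$. The $v_1\leftrightarrow v_2$ symmetry halves the labor, so the sole real obstacle is the bookkeeping of this polynomial cancellation.

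I would close by recording the conceptual reason for the block form of ${\bf B}$, which also serves as an independent check. After reordering ${\bf U}$ to group $(\rho_1,\rho_1v_1)$ and $(\rho_2,\rho_2v_2)$, the matrix ${\bf B}$ is block-diagonal with each $2\times 2$ block equal to $\left(\begin{smallmatrix}0&1\\-v_i^2&2v_i\end{smallmatrix}\right)$, the Jacobian of pressureless gas dynamics in the variable $(\rho_i,\rho_iv_i)$. This is exactly the announced decoupling of the four-moment model, for smooth solutions, into two pressureless systems: if each $(\rho_j,\rho_jv_j)$ solves pressureless gas dynamics then $\rho_jv_j^k$ obeys $\partial_t(\rho_jv_j^k)+\partial_x(\rho_jv_j^{k+1})=0$ for every $k\ge 0$, and summing over $j=1,2$ recovers (\ref{eq:modele_bipic}).
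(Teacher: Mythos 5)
Your proposal is correct, and since the paper explicitly leaves the proofs of this lemma (and Lemma \ref{lemme:2}) to the reader, it supplies exactly what is omitted rather than deviating from a written argument. The three verifications check out: ${\bf J}$ is indeed the Jacobian from the proof of Proposition \ref{prop_hyper} with $\sigma_0=v_1v_2$, $\sigma_1=-(v_1+v_2)$ substituted; differentiating $M_2=U_3^2/U_1+U_4^2/U_2$ and $M_3=U_3^3/U_1^2+U_4^3/U_2^2$ gives the stated rows of ${\bf A}$; and your verification of ${\bf J}{\bf A}={\bf A}{\bf B}$ is sound --- I confirm the fourth row of ${\bf J}{\bf A}$ collapses to $(-3v_1^4,-3v_2^4,4v_1^3,4v_2^3)$, matching the fourth row of ${\bf A}{\bf B}$, and that checking ${\bf J}{\bf A}={\bf A}{\bf B}$ rather than computing ${\bf A}^{-1}$ is the right economy (the upper three rows are free, since ${\bf J}$ acts as a shift on rows of ${\bf A}$ while the first two rows of ${\bf A}$ feed trivially into ${\bf B}$).

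One point deserves tightening. To pass from ${\bf J}{\bf A}={\bf A}{\bf B}$ to ${\bf A}^{-1}{\bf J}{\bf A}={\bf B}$ you invoke Proposition \ref{prop_quadrature}, but that proposition asserts only set-theoretic bijectivity of ${\bf U}\mapsto{\bf M}$, which does not by itself give invertibility of the Jacobian ${\bf A}=\nabla_{\bf U}{\bf M}$ at a point. The cleanest fix is direct: a short computation (subtract column $1$ from column $2$ and column $3$ from column $4$, then expand) gives $\det {\bf A} = -(v_1-v_2)^4$, which is nonzero on $\Omega$ since $v_1\neq v_2$ there; alternatively, note that the inverse map ${\bf M}\mapsto{\bf U}$ is itself smooth in the interior of the moment space (the quadrature formulas for $\rho_i$, $v_i$ are smooth when $M_0M_2-M_1^2>0$), so the chain rule applies. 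Your closing observation --- that after the reordering $(\rho_1,\rho_1v_1,\rho_2,\rho_2v_2)$ the matrix ${\bf B}$ is block-diagonal with pressureless-gas blocks $\bigl(\begin{smallmatrix}0&1\\-v_i^2&2v_i\end{smallmatrix}\bigr)$ --- is exactly the structural content the paper extracts immediately after the lemma, in the passage from (\ref{eq:edp_U}) with $\varepsilon=0$ to the decoupled system (\ref{eq:2gsp}), so it serves both as an independent sanity check and as the conceptual payoff.
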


\begin{lemma} \label{lemme:2}
 The gradients $\nabla_{{\bf U}} \eta$ and $\nabla_{{\bf U}} q$ are given by
$$
\nabla_{{\bf U}} \eta = 
\left(
\begin{array}{c}
 S(v_1)-v_1 S^{'}(v_1) \\
 S(v_2)-v_2 S^{'}(v_2) \\
 S^{'}(v_1) \\
 S^{'}(v_2)  \\
\end{array}
\right)^t, \quad 
\nabla_{{\bf U}} q = 
\left(
\begin{array}{c}
 -v_1^2 S^{'}(v_1) \\
 -v_2^2 S^{'}(v_2) \\
 S(v_1)+v_1 S^{'}(v_1) \\
 S(v_1)+v_1 S^{'}(v_1)  \\
\end{array}
\right)^t,
$$ 
and we have
$$
\nabla_{{\bf U}} q = \nabla_{{\bf U}} \eta {\bf A}^{-1} {\bf J} {\bf A}. 
$$ 
\end{lemma}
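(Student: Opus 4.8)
The plan is to compute both gradients directly from the definitions in (\ref{eq:couple_entropy_flux}), after rewriting $\eta$ and $q$ as explicit functions of the conservative variables ${\bf U}=(\rho_1,\rho_2,\rho_1 v_1,\rho_2 v_2)^t$, and then to confirm the identity $\nabla_{{\bf U}} q = \nabla_{{\bf U}}\eta\,{\bf A}^{-1}{\bf J}{\bf A}$ by a single matrix--vector product, reading ${\bf A}^{-1}{\bf J}{\bf A}$ off the preceding lemma. Since both $\eta$ and $q$ are, by construction, smooth functions of ${\bf U}$ on the interior of $\Omega$, this is a chain-rule computation rather than anything structural.

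First I would set $U_1=\rho_1$, $U_2=\rho_2$, $U_3=\rho_1 v_1$, $U_4=\rho_2 v_2$, so that $v_1 = U_3/U_1$ and $v_2 = U_4/U_2$; this is legitimate because Proposition \ref{prop_quadrature} guarantees $\rho_1,\rho_2>0$ on the interior of the moment space, hence $U_1,U_2$ do not vanish. Then $\eta = U_1 S(U_3/U_1) + U_2 S(U_4/U_2)$ and, using $\rho_1 v_1 = U_3$, $q = U_3 S(U_3/U_1) + U_4 S(U_4/U_2)$. The four partial derivatives of each follow from differentiating $v_i = U_{i+2}/U_i$, whose derivatives produce the factors $-v_i/U_i$ and $1/U_i$; these recombine into the stated components, e.g. $\partial_{U_1}\eta = S(v_1)-v_1 S'(v_1)$ and $\partial_{U_3}\eta = S'(v_1)$, and similarly for the $(U_2,U_4)$ block and for $q$. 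This step is entirely routine and is presumably why the authors leave it to the reader.

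The second step is to check $\nabla_{{\bf U}} q = \nabla_{{\bf U}}\eta\,{\bf A}^{-1}{\bf J}{\bf A}$. Treating $\nabla_{{\bf U}}\eta$ as a row vector and reading the four columns of ${\bf A}^{-1}{\bf J}{\bf A}$ from the preceding lemma, I would evaluate the four entries of the product one at a time. The block structure of ${\bf A}^{-1}{\bf J}{\bf A}$, in which the $v_1$-data and $v_2$-data decouple, makes each entry a sum of at most two terms: for instance the third entry is $(S(v_1)-v_1 S'(v_1))\cdot 1 + S'(v_1)\cdot 2v_1 = S(v_1)+v_1 S'(v_1)$, which is exactly the third component of $\nabla_{{\bf U}} q$, and the remaining three match by the same mechanism.

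There is essentially no obstacle: the statement is a direct consequence of the chain rule together with the explicit matrix ${\bf A}^{-1}{\bf J}{\bf A}$ already established. The only care needed is the bookkeeping of which combinations of $S$ and $S'$ appear in each slot, and here the identity $\nabla_{{\bf U}} q = \nabla_{{\bf U}}\eta\,{\bf A}^{-1}{\bf J}{\bf A}$ doubles as a consistency check against sign or indexing slips. This compatibility relation between the entropy and its flux is precisely what will be needed to close the entropy estimate (\ref{eq:ineq_entropy_visqueux}) in the sequel.
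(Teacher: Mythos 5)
Your proposal is correct and is exactly the routine chain-rule computation the paper intends, since it explicitly leaves the proofs of this lemma to the reader; there is no alternative argument in the paper to compare against. Note that your verification of the identity $\nabla_{{\bf U}} q = \nabla_{{\bf U}}\eta\,{\bf A}^{-1}{\bf J}{\bf A}$ in fact exposes a typo in the statement as printed: the fourth component of $\nabla_{{\bf U}} q$ must read $S(v_2)+v_2 S^{'}(v_2)$, not $S(v_1)+v_1 S^{'}(v_1)$, exactly as your ``consistency check'' mechanism predicts.
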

%

\noindent Before going on, let us make the following two remarks. We first note that thanks to the first 
lemma, (\ref{eq:edp_U})
with $\varepsilon=0$ inside the moment space gives~:
$$
\left\{
\begin{array}{l}
\partial_t \rho_1 + \partial_x \rho_1 v_1 = 0, \\
\partial_t \rho_2 + \partial_x \rho_2 v_2 = 0, \\
\partial_t \rho_1 v_1 - v_1^2 \partial_x \rho_1 + 2 v_1 \partial_{x} \rho_1 v_1 = 0, \\
\partial_t \rho_2 v_2 - v_2^2 \partial_x \rho_2 + 2 v_2 \partial_{x} \rho_2 v_2 = 0, \\
\end{array}
\right. 
$$
which is equivalent to 
\vspace{-0.2cm}
\begin{equation}\label{eq:2gsp}
\left\{
\begin{array}{l} 
\partial_t \rho_1 + \partial_x \rho_1 v_1 = 0, \\
\partial_t \rho_1 v_1 + \partial_x \rho_1 v_1^2  = 0, \\
\partial_t \rho_2 + \partial_x \rho_2 v_2 = 0, \\
\partial_t \rho_2 v_2 + \partial_x \rho_2 v_2^2 = 0. \\
\end{array}
\right. 
\end{equation}
Besides, at the frontier of the moment space, we obtain the pressureless gas dynamics on a single quadrature node.
We then observe that for smooth solutions, thanks to the remark at the end of the previous section, the system (\ref{eq:modele_bipic_short}) is nothing but either two 
decoupled or one single pressureless gas dynamics system of equations. We then observe that still with $\varepsilon=0$, 
${\bf D} =  0$, by lemma \ref{lemme:2}, in both cases~:
\begin{equation}\label{eq:entropieeqzero}
\partial_t \eta + \partial_x q = 0,
\end{equation}
that is for smooth solution both inside and at the frontier of the moment space.

Let us go back to the case $\varepsilon > 0$. We thus have the following equality,
$$
{\bf D} = \varepsilon \nabla_{{\bf U}} \eta {\bf A}^{-1} \partial_x ({\bf A} \partial_x {\bf U}),
$$
from which it is natural to isolate $\varepsilon \partial_{xx} \eta$~:
$$
\begin{array}{rcl}
{\bf D} &=& \varepsilon \nabla_{{\bf U}} \eta {\bf A}^{-1} \partial_x ({\bf A} \partial_x {\bf U}) \\
        &=& \varepsilon \nabla_{{\bf U}} \eta \partial_{xx} {\bf U} + 
            \varepsilon \nabla_{{\bf U}} \eta {\bf A}^{-1} \partial_x {\bf A} \partial_x {\bf U} \\
        &=& \varepsilon \partial_{x} (\nabla_{{\bf U}} \eta \partial_{x} {\bf U}) 
            - \varepsilon \partial_{x} (\nabla_{{\bf U}} \eta) \partial_{x} {\bf U} + 
            \varepsilon \nabla_{{\bf U}} \eta {\bf A}^{-1} \partial_x {\bf A} \partial_x {\bf U} \\
        &=& \varepsilon \partial_{xx} \eta + 
            \varepsilon \big( \nabla_{{\bf U}} \eta {\bf A}^{-1} \partial_x {\bf A} \partial_x {\bf U} 
             - \partial_{x} (\nabla_{{\bf U}} \eta) \partial_{x} {\bf U} \big). 
             \\
\end{array}
$$
By lemma \ref{lemme:2} giving $\nabla_{{\bf U}} \eta$, we easily get 
$$
\partial_{x} (\nabla_{{\bf U}} \eta) \partial_{x} {\bf U} = 
\rho_1 (\partial_x v_1)^2 S^{''}(v_1) + \rho_2 (\partial_x v_2)^2 S^{''}(v_2).
$$
It is now a matter to calculate 
$\nabla_{{\bf U}} \eta {\bf A}^{-1} \partial_x {\bf A} \partial_x {\bf U} = 
{\bf A}^{-t} (\nabla_{{\bf U}} \eta)^t \partial_x {\bf A} \partial_x {\bf U}$. We first observe that 
$$
\partial_x {\bf A} \partial_x {\bf U} = 
\left(
\begin{array}{c}
0 \\
0 \\
2 \rho_1 (\partial_x v_1)^2 + 2 \rho_2 (\partial_x v_2)^2 \\
6 \rho_1 v_1 (\partial_x v_1)^2 + 6 \rho_2 v_2 (\partial_x v_2)^2
\end{array}
\right)
$$
so that only the last two components of ${\bf A}^{-t} (\nabla_{{\bf U}} \eta)^t$ are actually needed.
Finally, easy calculations not reported here lead to
$$
{\bf A}^{-t} (\nabla_{{\bf U}} \eta)^t \partial_x {\bf A} \partial_x {\bf U} = 
- \frac{1}{(v_1-v_2)^4} ( 2 \rho_1 (\partial_x v_1)^2 X_1 + 2 \rho_2 (\partial_x v_2)^2 X_2),
$$
where we have set
$$
\left\{
\begin{array}{l}
 X_1=(v_1-v_2)^2 \Big( 3\big(S(v_1)-S(v_2)\big) - 2(v_1-v_2)S^{'}(v_1) - (v_1-v_2)S^{'}(v_2)  \Big), \\
 X_2=-(v_1-v_2)^2 \Big( 3\big(S(v_1)-S(v_2)\big) - (v_1-v_2)S^{'}(v_1) - 2(v_1-v_2)S^{'}(v_2)  \Big).
\end{array}
\right.
$$
The entropy inequality (\ref{eq:ineq_entropy_visqueux}) is then valid if and only if 
$$
\nabla_{{\bf U}} \eta {\bf A}^{-1} \partial_x {\bf A} \partial_x {\bf U} 
             - \partial_{x} (\nabla_{{\bf U}} \eta) \partial_{x} {\bf U} \leq 0,
$$
that is, setting $S_i=S(v_i)$, $S^{'}_i=S^{'}(v_i)$ and $S^{''}_i=S^{''}(v_i)$, $i=1,2$, 
\begin{equation}
\begin{array}{rcl}
&(v_1-v_2)^2  \rho_1 (\partial_x v_1)^2 \Big( 6\big(S_1-S_2\big) - 4(v_1-v_2)S^{'}_1 - 2(v_1-v_2)S^{'}_2 +
(v_1-v_2)^2 S^{''}_1 \Big)&\\
&+&\\
&(v_1-v_2)^2  \rho_2 (\partial_x v_2)^2 \Big( -6\big(S_1-S_2\big) + 2(v_1-v_2)S^{'}_1 + 4(v_1-v_2)S^{'}_2 +
(v_1-v_2)^2 S^{''}_2 \Big) &\\
&\geq 0 &.
\end{array}
\end{equation}
A sufficient condition is given by
\begin{equation} \label{eq:ineg_cond_entropy}
\left\{
\begin{array}{c}
6\big(S_1-S_2\big) - 4(v_1-v_2)S^{'}_1 - 2(v_1-v_2)S^{'}_2 +
(v_1-v_2)^2 S^{''}_1 \geq 0,\\
-6\big(S_1-S_2\big) + 2(v_1-v_2)S^{'}_1 + 4(v_1-v_2)S^{'}_2 +
(v_1-v_2)^2 S^{''}_2 \geq 0.
\end{array}
\right.
\end{equation}
Let us focus for instance on the first inequality (the second one is treated in a similar way), 
and let us consider the left-hand side as a function 
of $v_2$, for any given $v_1$~:
$$
{\mathcal F}_1(v_2) = 6(S_1-S_2) - 4(v_1-v_2)S^{'}_1 - 2(v_1-v_2)S^{'}_2 +
(v_1-v_2)^2 S^{''}_1.
$$ 
Differentiation yields
$$
\begin{array}{l}
{\mathcal F}_1^{'}(v_2) = 4(S^{'}_1-S^{'}_2) - 2(v_1-v_2)(S^{''}_1 + S^{''}_2), \\
{\mathcal F}_1^{''}(v_2) = 2(S^{''}_1-S^{''}_2) - 2(v_1-v_2)S^{'''}_2, \\
{\mathcal F}_1^{'''}(v_2) = 2(v_2-v_1)S^{''''}_2.
\end{array}
$$ 
It is then clear that 
${\mathcal F}_1(v_1)={\mathcal F}_1^{'}(v_1)={\mathcal F}_1^{''}(v_1)={\mathcal F}_1^{'''}(v_1)=0$. Then, provided that 
$S^{''''}(v) \geq 0$, $\forall v$, we easily get by a chain argument based on the sign of the derivative and 
the monotonicity property that ${\mathcal F}_1(v_2) \geq 0$, $\forall v_1,v_2$. We have thus proved the 
following proposition~: \\
\begin{proposition}
 Smooth solutions of (\ref{eq:modele_bipic_visqueux_short}) satisfy the entropy inequality 
(\ref{eq:ineq_entropy_visqueux}) for any entropy entropy-flux pair $(\eta,q)$ defined 
by (\ref{eq:couple_entropy_flux}) provided that $v \to S(v)$ is a smooth function from $\mathbb{R}$ to $\mathbb{R}$ 
with nonnegative fourth-order derivative. In particular, the natural choice $S(v) = v^{2 \alpha}$ 
with $\alpha \geq 2$ is suitable. \\
\end{proposition}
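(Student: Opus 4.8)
The plan is to establish the entropy inequality (\ref{eq:ineq_entropy_visqueux}) for the viscous system (\ref{eq:modele_bipic_visqueux_short}) by computing $\partial_t \eta + \partial_x q$ explicitly and matching it against $\varepsilon \partial_{xx}\eta$. Since the pair $\eta$ and $q$ in (\ref{eq:couple_entropy_flux}) is most transparent as a function of the quadrature variables $\mathbf{U}$, I would first rewrite the regularized system in the $\mathbf{U}$-variables as in (\ref{eq:edp_U}) and substitute into the chain-rule expansion $\partial_t\eta + \partial_x q = \nabla_{\mathbf{U}}\eta\,\partial_t\mathbf{U} + \nabla_{\mathbf{U}}q\,\partial_x\mathbf{U}$. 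This yields the dissipation rate $\mathbf{D}$, consisting of a convective part and a viscous part proportional to $\varepsilon$.

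The first simplification rests on the structural identity $\nabla_{\mathbf{U}}q = \nabla_{\mathbf{U}}\eta\,\mathbf{A}^{-1}\mathbf{J}\mathbf{A}$ from Lemma \ref{lemme:2}: it makes the two convective contributions to $\mathbf{D}$ cancel identically. In particular, setting $\varepsilon = 0$ gives the entropy equality $\partial_t\eta + \partial_x q = 0$, consistent with the fact (from the first lemma) that the inviscid system decouples into pressureless gas dynamics on each node, or into a single such system at the frontier. For $\varepsilon > 0$ only the viscous term $\varepsilon\,\nabla_{\mathbf{U}}\eta\,\mathbf{A}^{-1}\partial_x(\mathbf{A}\partial_x\mathbf{U})$ remains, and I would split it through $\nabla_{\mathbf{U}}\eta\,\partial_x(\mathbf{A}\partial_x\mathbf{U}) = \partial_x(\nabla_{\mathbf{U}}\eta\,\partial_x\mathbf{U}) - \partial_x(\nabla_{\mathbf{U}}\eta)\,\partial_x\mathbf{U}$ so as to recover $\varepsilon\partial_{xx}\eta$ plus a remainder. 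The inequality (\ref{eq:ineq_entropy_visqueux}) is then equivalent to the sign condition $\nabla_{\mathbf{U}}\eta\,\mathbf{A}^{-1}\partial_x\mathbf{A}\,\partial_x\mathbf{U} - \partial_x(\nabla_{\mathbf{U}}\eta)\,\partial_x\mathbf{U} \le 0$.

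To evaluate this remainder I would use the explicit gradient $\nabla_{\mathbf{U}}\eta$ from Lemma \ref{lemme:2} together with the matrix $\mathbf{A}$ from the first lemma. The term $\partial_x(\nabla_{\mathbf{U}}\eta)\,\partial_x\mathbf{U}$ collapses to $\rho_1(\partial_x v_1)^2 S''(v_1) + \rho_2(\partial_x v_2)^2 S''(v_2)$, while in the other term only the last two components of $\mathbf{A}^{-t}(\nabla_{\mathbf{U}}\eta)^t$ are needed, because $\partial_x\mathbf{A}\,\partial_x\mathbf{U}$ has vanishing first two components. Collecting everything produces a combination of $\rho_1(\partial_x v_1)^2$ and $\rho_2(\partial_x v_2)^2$ with coefficients $X_1, X_2$, reducing the whole statement to the two pointwise algebraic inequalities (\ref{eq:ineg_cond_entropy}).

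The main obstacle, and the only genuinely nontrivial point, is verifying those two inequalities for all $v_1, v_2$ under the hypothesis $S'''' \ge 0$. My plan is to freeze $v_1$ and regard the left side of the first inequality as a one-variable function $\mathcal{F}_1(v_2)$, the second inequality being symmetric. Differentiating, I expect to find that $\mathcal{F}_1$ and its first three derivatives all vanish at $v_2 = v_1$, and that $\mathcal{F}_1'''(v_2) = 2(v_2 - v_1)S''''(v_2)$, whose sign is exactly that of $v_2 - v_1$ once $S'''' \ge 0$. A downward cascade of monotonicity then closes the argument: $\mathcal{F}_1''$ attains its minimum value $0$ at $v_2 = v_1$ and is hence nonnegative, so $\mathcal{F}_1'$ changes sign at $v_1$, so $\mathcal{F}_1$ attains its minimum value $0$ there and is therefore nonnegative everywhere. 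Finally I would confirm that $S(v) = v^{2\alpha}$ with $\alpha \ge 2$ has $S'''' \ge 0$ on $\mathbb{R}$, the factor $v^{2\alpha - 4}$ then being a nonnegative even power carried by a positive coefficient, which shows that the threshold $\alpha = 2$ is the natural one and completes the proof.
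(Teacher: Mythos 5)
Your proposal is correct and follows essentially the same route as the paper: the same change of variables to ${\bf U}$, the same use of the identity $\nabla_{{\bf U}} q = \nabla_{{\bf U}} \eta\, {\bf A}^{-1}{\bf J}{\bf A}$ to cancel the convective part, the same splitting of the viscous term to isolate $\varepsilon \partial_{xx}\eta$, the same reduction to the pointwise inequalities (\ref{eq:ineg_cond_entropy}), and the same argument freezing $v_1$ and cascading from ${\mathcal F}_1''' (v_2)= 2(v_2-v_1)S''''(v_2)$ with all derivatives vanishing at $v_2=v_1$. No gaps; this matches the paper's proof in both structure and detail.
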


\begin{rem}
Any third-order polynomial may of course be added to the leading term of $S$, without changing 
the sign of the fourth-order derivative. However, if we focus on (strictly) convex functions $v \to S(v)$ 
in order to get a (strictly) convex entropy $\eta = \eta({\bf U})$, only first-order polynomials 
may be added without changing the convexity property. 
\end{rem}

\begin{rem}
If we consider $S(v)=1,v,v^2,v^3$, it is easily checked that (\ref{eq:ineg_cond_entropy}) holds true 
with two equalities. In agreement with (\ref{eq:modele_bipic_visqueux}), these choices that lead to 
the pairs $(\eta,q) = (M_i,M_{i+1})$, $i=0,...,3$, are admissible. 
\end{rem} 

\begin{rem}
In the case $M_0 > 0$, $e=0$, $q=0$ with the additional conditions associated with the connection between the interior and the frontier of the moment space presented in subsection \ref{border}, we clearly have $M_k=M_1^k/M_0^{k-1}$. In this case, the entropy pairs clearly work also in such a case, for smooth solutions, and admits a smooth behavior at the frontier of the moment space in the cone we have defined previously.
\end{rem}

\section{Kinetic-macroscopic relation for smooth solutions}

For smooth solutions, we established in the previous section that the four-moment model (\ref{eq:modele_bipic}) is equivalent 
to the following two decoupled pressureless gas dynamics model 
\begin{equation} \label{eq:model_twopgd}
\left\{
\begin{array}{l}
\partial_t \rho_1 + \partial_x \rho_1 v_1 = 0, \\
\partial_t \rho_1 v_1 + \partial_x \rho_1 v_1^2  = 0, \\
\partial_t \rho_2 + \partial_x \rho_2 v_2 = 0, \\
\partial_t \rho_2 v_2 + \partial_x \rho_2 v_2^2 = 0, \\
\end{array}
\right. 
\end{equation}
where $\rho_1$, $\rho_2$, $v_1$ and $v_2$ are defined by the nonlinear system (\ref{eq:quadrature}). The aim of this section 
is to prove a rigorous equivalence result, still for smooth solutions, between this macroscopic model and the free transport 
kinetic formulation (\ref{eq:cinetique}) when the velocity distribution is given by a set of two Dirac delta functions. 
This result is nothing but a generalization of the one given in \cite{Bouchut94} for the usual pressureless gas dynamics model. \\

\begin{proposition}
Let $T>0$ and $\rho_i(t,x)$, $v_i(t,x)$ in $\mathcal{C}^1(]0,T[\times \mathbb{R})$ for $i=1,2$. Let us define 
\vspace{-0.5cm}
\begin{equation*}
f(t,x,v) = \sum_{i=1}^2 \rho_i(t,x) \delta (v - v_i(t,x)).
\end{equation*}
Then, $\rho_i$ and $v_i$ solve (\ref{eq:modele_bipic}), or equivalently (\ref{eq:model_twopgd}), in $]0,T[\times \mathbb{R}$ 
if and only if 
\begin{equation} \label{eq:equ_cine_distrib_1}
\partial_t f + v \partial_x f = 0, \quad \mbox{in} \quad ]0,T[ \times \mathbb{R} \times \mathbb{R}
\end{equation}
in the distributional sense, i.e. if and only if $\forall \phi \,\, \in \,\, \mathcal{C}^{\infty}_c(]0,T[\times \mathbb{R})$
and $\chi \,\, \in \,\, \mathcal{C}^{\infty}_c(\mathbb{R})$ 
\begin{equation} \label{eq:equ_cine_distrib_2}
\int_0^T \int_{\mathbb{R}} \sum_{i=1}^2 \rho_i(t,x) 
\Big( \partial_t \phi(t,x) + v_i(t,x) \partial_x \phi(t,x) \Big) 
\chi\big(v_i(x,t)\big)=0. \\
\end{equation}
\end{proposition}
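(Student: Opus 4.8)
The plan is to prove the equivalence by testing the distributional equation \eqref{eq:equ_cine_distrib_1} against tensor-product test functions and reducing it to the moment system. The key observation is that the distribution $f = \sum_{i=1}^2 \rho_i \delta(v-v_i)$ is a sum of Dirac masses in $v$ moving along the curves $v=v_i(t,x)$, so the action of $f$ on any smooth test function $\Phi(t,x,v)$ is simply $\int_0^T\!\int_{\mathbb{R}} \sum_i \rho_i(t,x)\,\Phi(t,x,v_i(t,x))\,dx\,dt$. The natural class of test functions to use is the tensor products $\Phi(t,x,v)=\phi(t,x)\chi(v)$ with $\phi\in\mathcal{C}^\infty_c(]0,T[\times\mathbb{R})$ and $\chi\in\mathcal{C}^\infty_c(\mathbb{R})$; since such products are dense (their linear span is dense in $\mathcal{C}^\infty_c(]0,T[\times\mathbb{R}\times\mathbb{R})$), testing against them is equivalent to testing against all test functions, and this is exactly why \eqref{eq:equ_cine_distrib_2} is the right reformulation.

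First I would spell out what \eqref{eq:equ_cine_distrib_1} means in the distributional sense: for every $\Phi\in\mathcal{C}^\infty_c$, one requires $\int\!\!\int\!\!\int f\,(\partial_t\Phi + v\,\partial_x\Phi)=0$ (no boundary terms appear because $\Phi$ has compact support in $t$ and $x$, and integration by parts in both $t$ and $x$ transfers the derivatives onto $\Phi$ while keeping the factor $v$ inert). Evaluating on $f=\sum_i\rho_i\delta(v-v_i)$ and taking $\Phi=\phi\chi$ collapses the $v$-integral via the Dirac masses, yielding precisely the left-hand side of \eqref{eq:equ_cine_distrib_2} with $\chi$ evaluated at $v_i$. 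This establishes that \eqref{eq:equ_cine_distrib_1} holds if and only if \eqref{eq:equ_cine_distrib_2} holds for all admissible $\phi,\chi$, which is the statement to be proved on the kinetic side.

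The heart of the argument is to connect \eqref{eq:equ_cine_distrib_2} with the macroscopic system \eqref{eq:model_twopgd}. Here I would exploit the arbitrariness of $\chi$: fixing a value $w$ and localizing $\chi$ near each abscissa, or more cleanly expanding $\chi$ in its Taylor data, one extracts the two transport identities separately. The cleanest route is to rewrite the integrand in \eqref{eq:equ_cine_distrib_2} as $\sum_i \chi(v_i)\,\rho_i(\partial_t\phi + v_i\partial_x\phi)$ and, using that the solution is $\mathcal{C}^1$, integrate by parts in $(t,x)$ to move the derivatives off $\phi$. Because $\chi(v_i(t,x))$ depends on $(t,x)$ through $v_i$, the integration by parts produces extra terms involving $\chi'(v_i)(\partial_t v_i + v_i\partial_x v_i)$ multiplied by $\rho_i$; collecting everything, the vanishing of the integral for all $\phi,\chi$ forces both the mass balance $\partial_t\rho_i+\partial_x(\rho_i v_i)=0$ and the momentum balance $\rho_i(\partial_t v_i + v_i\partial_x v_i)=0$, i.e. \eqref{eq:model_twopgd}, and conversely these two balances make the integral vanish. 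The equivalence of \eqref{eq:model_twopgd} with \eqref{eq:modele_bipic} for smooth solutions is already established in the previous sections (via Lemma~\ref{lemme:2} and the diagonalization ${\bf A}^{-1}{\bf J}{\bf A}$), so the chain is complete.

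I expect the main obstacle to be the bookkeeping in this last step: because $v_i$ is genuinely a function of $(t,x)$, one must carefully separate the contribution weighted by $\chi(v_i)$ from the one weighted by $\chi'(v_i)$ and argue that, by varying $\chi$ independently (so that $\chi$ and $\chi'$ can be prescribed independently at the point values $v_i$ where $v_1\neq v_2$), each must vanish on its own. The hypothesis $v_1\neq v_2$ inside the moment space (Proposition~\ref{prop_hyper}) is what guarantees the two nodes can be separated by the choice of $\chi$, so that the two pressureless systems decouple; this is exactly the smooth-solution regime where the equivalence holds.
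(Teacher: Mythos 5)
Your proof is correct, but your forward direction takes a genuinely different route from the paper's. Where you integrate by parts in $(t,x)$ to obtain the pointwise identity $\sum_i \chi(v_i)\bigl(\partial_t\rho_i+\partial_x(\rho_i v_i)\bigr)+\rho_i\,\chi'(v_i)\bigl(\partial_t v_i+v_i\partial_x v_i\bigr)=0$ and then separate the four brackets by prescribing $\chi(v_i)$ and $\chi'(v_i)$ independently at the two nodes (a Hermite-type localization, valid only where $v_1\neq v_2$), the paper keeps the derivatives on $\phi$ and simply chooses $\chi$ to coincide with $v^k$, $k=0,\dots,3$, on the locally bounded range of the $v_i$; this yields the four moment equations (\ref{eq:modele_bipic}) directly, and (\ref{eq:model_twopgd}) then follows by invoking the closure relation (\ref{eq:quadrature}). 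The trade-off is clear: your separation argument produces the two decoupled pressureless systems without ever inverting the moment-to-quadrature map, but it genuinely requires $v_1\neq v_2$ pointwise --- a restriction you correctly flag and which holds inside $\Omega$ --- whereas the paper's polynomial choice needs no node separation to derive the moment system and delegates the passage to (\ref{eq:model_twopgd}) to the equivalence already established in the moment space. Your converse direction is essentially the paper's: both reduce to the identity $\partial_t\bigl(\rho_i\chi(v_i)\bigr)+\partial_x\bigl(\rho_i v_i\chi(v_i)\bigr)=0$, which the paper obtains by multiplying $\rho_i(\partial_t v_i+v_i\partial_x v_i)=0$ by $\chi'$ and adding $\chi(v_i)$ times the mass conservation equation. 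Your preliminary density remark about tensor-product test functions is a harmless addition: the paper sidesteps it by taking (\ref{eq:equ_cine_distrib_2}) itself as the definition of the distributional formulation.
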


\begin{proof}
Let us first assume that (\ref{eq:equ_cine_distrib_2}) holds true. Since the velocity functions $v_i$ are in particular locally 
bounded, one can successively choose $\chi \,\, \in \,\, \mathcal{C}^{\infty}_c(\mathbb{R})$ such that 
$\chi(v)=v^k$, $k=0,...,3$ for all $v=v_i(t,x)$ and then get  
$$
\int_0^T \int_{\mathbb{R}} \sum_{i=1}^2 \rho_i(t,x) 
\Big( \partial_t \phi(t,x) + v_i(t,x) \partial_x \phi(t,x) \Big) 
\big(v_i(t,x)\big)^k=0
$$
for all $\phi \,\, \in \,\, \mathcal{C}^{\infty}_c(]0,T[\times \mathbb{R})$. Invoking the closure relation 
(\ref{eq:quadrature}), this gives the four-moment model
(\ref{eq:modele_bipic}), or equivalently (\ref{eq:model_twopgd}), as $\rho_i(t,x)$ and $v_i(t,x)$ are smooth functions. \\
Conversely, let us assume that the partial differential equations of (\ref{eq:model_twopgd}) are satisfied. Using the mass
conservation equations, it is then usual to show that for $i=1,2$
$$
\rho_i (\partial_t v_i + v_i \partial_x v_i) = 0,
$$ 
and then multiplying by $\chi'$ for any smooth function $\chi$, 
$$
\partial_t \rho_i \chi(v_i) + \partial_x \rho_i \chi(v_i) v_i = 0.
$$ 
Summing over $i=1,2$ and integrating past a test function $\phi \,\, \in \,\, \mathcal{C}^{\infty}_c(]0,T[\times \mathbb{R})$ 
gives the expected result (\ref{eq:equ_cine_distrib_2}). This concludes the proof.
\end{proof}

This Proposition allows, as a corollary, to introduce a particular type of solution which will be denoted piecewise {\it free boundary} $\mathcal C^1$ solutions. Such solutions correspond to a discontinuous connection from the interior of the moment space to the frontier through a contact discontinuity for which the Rankine Hugoniot solutions are trivially satisfied as well as the entropy conservation equation (\ref{eq:entropieeqzero}). 

\begin{corollary}\label{freebound}
We consider the following distribution at the kinetic level $f(t,x,v) = \sum_{i=1}^2 \rho_i(t,x) \delta (v - v_i(t,x))$, where $\rho_1(t,x)>0$ and $v_1(t,x)$ are taken as constants (or sufficiently smooth in some time interval), whereas $\rho_2(t,x)$ is zero except in a compact connected subset $K_0$ at time $t=0$ of $\mathbb{R}$, where $\rho_2(0,x)>0$ and $v_2(0,x)$ are two constants (or sufficiently smooth in some time interval) such that $v_2\neq v_1$. The resulting solution at the moment level exhibits two discontinuities at the frontier of the compact set $K_t$ which is the translation of set $K_0$ at velocity $v_2$; however the system \ref{eq:model_twopgd} as well as  the entropy conservation equation (\ref{eq:entropieeqzero}) are satisfied in the weak sense, that is the equations are satisfied in the usual sense where the solution is smooth and  Rankine-Hugoniot conditions are satified at discontinuity points.
\end{corollary}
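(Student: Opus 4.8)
The plan is to verify Corollary~\ref{freebound} by reducing everything to the previous Proposition together with the structure of pressureless gas dynamics, and then checking the two jump interfaces directly. First I would observe that in the region where $\rho_2>0$, the distribution is a genuine two-node quadrature with $v_1\neq v_2$, so by the preceding Proposition the pair $(\rho_i,v_i)$ solves the decoupled system (\ref{eq:model_twopgd}) in the classical sense precisely where the coefficients are smooth; since $\rho_1,v_1$ are constant (or smooth) globally and $\rho_2,v_2$ are constant (or smooth) on the moving set $K_t$, each of the four equations of (\ref{eq:model_twopgd}) holds pointwise on the open set where $x$ lies strictly inside or strictly outside $K_t$. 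The only places where smoothness fails are the two moving boundary points $\partial K_t$, which travel at speed $v_2$ because $K_t$ is by definition the translate of $K_0$ at velocity $v_2$.

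Next I would handle these two boundaries as contact discontinuities. The natural strategy is to write the solution at the moment level ${\bf M}$, which jumps from the interior value (two active nodes) to the frontier value (single node $(\rho_1,v_1)$) across each boundary of $K_t$, and to check the Rankine--Hugoniot relations $s\,[\![{\bf M}]\!]=[\![{\bf F}({\bf M})]\!]$ with shock speed $s=v_2$. Because the vanishing node carries velocity $v_2$, its contribution to each moment $M_i$ and to each flux component $M_{i+1}$ is exactly $\rho_2 v_2^{i}$ and $\rho_2 v_2^{i+1}$ respectively; so the jump in $M_i$ is $\rho_2 v_2^i$ while the jump in the flux is $\rho_2 v_2^{i+1}=v_2\cdot\rho_2 v_2^i$, giving $[\![F_i]\!]=v_2[\![M_i]\!]$ for $i=0,1,2$, and the same identity for the closed fourth flux $\overline{M_4}=\rho_1 v_1^4+\rho_2 v_2^4$ using (\ref{eq:M4}). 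Hence Rankine--Hugoniot holds with $s=v_2$ at both interfaces, confirming that the connection interior$\,\to\,$frontier is a contact discontinuity associated with the $v_2$-field, which is linearly degenerate by Proposition~\ref{prop_hyper}. The same computation applied to the entropy pair (\ref{eq:couple_entropy_flux}) gives $[\![q]\!]=v_2[\![\eta]\!]$ since only the node at $v_2$ disappears, so the entropy conservation equation (\ref{eq:entropieeqzero}) is likewise satisfied across the jumps, and it already holds in the smooth zones by the computation at the end of Section~3.

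The subtle point, which I would treat carefully, is the consistency of the frontier data with the cone framework of Proposition~\ref{regular_flux}. At the moving boundary the vector ${\bf M}$ lands on $\Gamma$ with a single surviving node, and one must make sure the limit is reached inside the admissible cone $|q/(M_0 e)|\le\eta$ so that ${\bf F}({\bf M})$ is continuous up to $\Gamma$ and the flux value used in the Rankine--Hugoniot balance is unambiguous; here this is automatic because the surviving node $v_1$ is at a fixed finite distance from the vanishing one, which is exactly the regime $\delta=0$ analyzed just before the definition of regular path, corresponding to a finite limit of $q/(M_0e)$. I expect the main obstacle to be purely bookkeeping: organizing the argument so that the weak formulation of (\ref{eq:model_twopgd}) (equivalently (\ref{eq:modele_bipic})) across the two interfaces is stated cleanly as ``smooth equations plus Rankine--Hugoniot,'' rather than any genuine analytic difficulty, since the linear degeneracy of the field makes the contact speed coincide with $v_2$ and trivializes the jump relations. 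Once the jump identities $[\![F_i]\!]=v_2[\![M_i]\!]$ and $[\![q]\!]=v_2[\![\eta]\!]$ are recorded, assembling them with the classical interior equations yields the distributional (weak) solution asserted, which completes the proof.
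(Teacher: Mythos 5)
Your proof is correct and takes essentially the same route as the paper's: the equations hold classically in the smooth regions, and at the two interfaces moving at speed $v_2$ the jump relations $[\![{\bf F}]\!]=v_2[\![{\bf M}]\!]$ and $[\![q]\!]=v_2[\![\eta]\!]$ hold because node $1$ is continuous across the interface while node $2$ travels exactly at the discontinuity speed — precisely the paper's zero-relative-mass-flux contact-discontinuity argument, extended verbatim to every entropy pair. Your extra detour through the cone of Proposition~\ref{regular_flux} is unnecessary here, since the connection to the frontier is a jump rather than a continuous path and the frontier point is isolated, so the one-node closure $\overline{M_4}=M_1^4/M_0^3$ defines the flux unambiguously; the paper disposes of this in the remark following the corollary by noting the result is independent of the quadrature convention at such isolated points.
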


\begin{proof}
Clearly, the moment solution will satisfy the system of conservation equations everywhere except at the frontier of the $K_t$ set.
The Rankine-Hugoniot jump conditions are trivially satisfied at the discontinuities where the mass flux associated to the first abscissa is $\rho_1(v_1-v_2)$ in the referential of the discontinuity and leads to zero jump conditions for the part of the flux associated to the first abscissa by continuity, whereas the mass flux associated to the second abscissa is zero, as usual in contact discontinuities, which also allows to conclude. The same path allows to conclude that for any entropy-flux pair, the conservation equation is satisfied in the weak sense.
\end{proof}

Let us underline the fact that in the region where the second weight is zero outside the compact set $K_t$, we have used so far the convention that in such a region where a single quadrature node is to be found, the two weights are equal and the two abscissas are equal. The results proposed in the previous corollary are of course independent of such a choice since the point at the frontier of the moment space is isolated. Besides, such a corollary can be extended to as many quadrature nodes as needed as long as the number of nodes allows to describe the dynamics at the kinetic level at any point and time. Finally, the collision of two particle packets presented in subsection \ref{col_two} satisfies the assumptions of Corollary \ref{freebound} and will be an entropic solution.

\section{Riemann problems and entropic measure solutions}

In this section, we focus on the Riemann problem, which is associated with the inital condition for two constant states ${\bf M}_L$ and ${\bf M}_R$ in $\Omega$.
\begin{equation} \label{eq:donnee_initiale}
 {\bf M}(x,0)=
\left\{
\begin{array}{lll}
 {\bf M}_L & \mbox{if} & x<0,\\
 {\bf M}_R & \mbox{if} & x>0,\\
\end{array}
\right.
\end{equation}

\ \\
The solution of 
(\ref{eq:modele_bipic_short})-(\ref{eq:donnee_initiale}) is sought in the form
\begin{equation} \label{eq:solution_mesure}
 {\bf M}(x,t)=
\left\{
\begin{array}{lll}
 {\bf M}_L & \mbox{if} & x<\sigma_L t,\\
 {\bf M}_L^{\delta}(t) \delta(x-\sigma_L t) & \mbox{if} & x=\sigma_L t,\\
 {\bf M}_{\star} & \mbox{if} & \sigma_L t < x < \sigma_R t,\\
 {\bf M}_R^{\delta}(t) \delta(x-\sigma_R t) & \mbox{if} & x=\sigma_R t,\\
 {\bf M}_R & \mbox{if} & x>\sigma_R t,\\
\end{array}
\right.
\end{equation}
which corresponds to the juxtaposition of two Dirac delta functions with mass ${\bf M}_{\beta}^{\delta}$ and position
$x-\sigma_{\beta} t$, $\beta=L,R$, and separated by a 
constant state ${\bf M}_{\star}$ in $\Omega$. Here, $\sigma_{\beta}$ denotes a real number, we have $m_{\beta}(t) \geq 0$, $m_{\beta}(0) = 0$ and $\beta=L,R$, and 
${\bf M}_{\beta}^{\delta}(t)$ is defined by
\begin{equation}
{\bf M}_{\beta}^{\delta}(t) =
\left(
\begin{array}{l}
m_{\beta}(t) \\
m_{\beta}(t) \sigma_{\beta} \\
m_{\beta}(t) \sigma_{\beta}^2\\
m_{\beta}(t) \sigma_{\beta}^3
\end{array}
\right)
\end{equation}

We introduce the following natural definitions of (entropy) measure solutions. \\
\begin{definition}
Let $\eta = \rho_1 S(v_1) + \rho_2 S(v_2)$ and 
$q = \rho_1 v_1 S(v_1) + \rho_2 v_2 S(v_2)$ with $S(v) = v^{2\alpha}$, $\alpha \in \mathbb{N}$. 
We say that (\ref{eq:solution_mesure}) is a measure solution of (\ref{eq:modele_bipic_short})-(\ref{eq:donnee_initiale}) 
if and only if $\partial_t \eta + \partial_x q = 0$ in ${\mathcal{D}}^{'}(]0,\infty[ \times \mathbb{R})$ for $S(v) = v^{2\alpha}$ 
with $\alpha=0,\frac{1}{2},1,\frac{3}{2}$, that is for $(\eta,q)=(M_i,M_{i+1})$, $i=0,1,2,3$. 
We say that (\ref{eq:solution_mesure}) is an entropic measure solution of (\ref{eq:modele_bipic_short})-(\ref{eq:donnee_initiale}) 
if and only if $\partial_t \eta + \partial_x q \le 0$ in ${\mathcal{D}}^{'}(]0,\infty[ \times \mathbb{R})$ for $S(v) = v^{2\alpha}$ 
with $\alpha=0,\frac{1}{2},1,\frac{3}{2}$ and $\alpha \geq 2$. \\
\end{definition}

\noindent We can prove the following equivalence result. \\

\begin{theorem}
The solution given by (\ref{eq:solution_mesure}) is a measure solution of (\ref{eq:modele_bipic_short})-(\ref{eq:donnee_initiale}) 
iff
\begin{equation} \label{eq:rh_generalisee}
\left\{
\begin{array}{l}
 \sigma_L ({\bf M}_L - {\bf M}_{\star}) t - 
\big({\bf F}({\bf M}_L) - {\bf F}({\bf M}_{\star})\big) t + {\bf M}_L^{\delta}(t) = 0 \\
\ \\
 \sigma_R ({\bf M}_{\star} - {\bf M}_R) t - 
\big({\bf F}({\bf M}_{\star}) - {\bf F}({\bf M}_R)\big) t + {\bf M}_R^{\delta}(t) = 0.
\end{array}
\right.
\end{equation}
The solution given by (\ref{eq:solution_mesure}) is a entropic measure solution of (\ref{eq:modele_bipic_short})-(\ref{eq:donnee_initiale}) 
if and only if in addition, for all 
$S(v) = v^{2\alpha}$ with $\alpha \geq 2$~:
\begin{equation} \label{eq:entropy_generalisee}
\left\{
\begin{array}{l}
 \sigma_L \big(\eta({\bf M}_L) - \eta({\bf M}_{\star})\big) t - 
\big(q({\bf M}_L) - q({\bf M}_{\star})\big) t + \eta({\bf M}_L^{\delta}(t)) \leq 0 \\
\ \\
 \sigma_R \big(\eta({\bf M}_{\star}) - \eta({\bf M}_R)\big) t - 
\big(q({\bf M}_{\star}) - q({\bf M}_R)\big) t + \eta({\bf M}_R^{\delta}(t)) \leq 0,
\end{array}
\right.
\end{equation} 
\end{theorem}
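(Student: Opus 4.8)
The plan is to substitute the ansatz (\ref{eq:solution_mesure}) into the distributional formulation and read off, ray by ray, the conditions forced on the two Dirac masses. The decisive preliminary observation is that each ${\bf M}_\beta^\delta(t)=m_\beta(t)(1,\sigma_\beta,\sigma_\beta^2,\sigma_\beta^3)^t$ is the moment vector of the monokinetic distribution $m_\beta(t)\,\delta(v-\sigma_\beta)$, so that by the continuous extension of the flux to the frontier of the moment space (Proposition \ref{regular_flux}) one has ${\bf F}({\bf M}_\beta^\delta(t))=\sigma_\beta\,{\bf M}_\beta^\delta(t)$ and, for the pairs (\ref{eq:couple_entropy_flux}), $q({\bf M}_\beta^\delta(t))=\sigma_\beta\,\eta({\bf M}_\beta^\delta(t))$; in words, the flux and entropy flux carried by a $\delta$-wave are just its speed times its density and its entropy. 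These identities are what make the moving concentrations consistent with the transport structure of (\ref{eq:modele_bipic_short}).

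First I would establish the measure-solution equivalence. For $(\eta,q)=(M_i,M_{i+1})$ the requirement is $\int_0^\infty\!\int_{\mathbb R}({\bf M}\,\partial_t\phi+{\bf F}({\bf M})\,\partial_x\phi)=0$ for all $\phi\in\mathcal C_c^\infty(]0,\infty[\times\mathbb R)$. I would split the half-plane into the three wedges on which ${\bf M}$ and ${\bf F}({\bf M})$ are constant and the two rays $x=\sigma_\beta t$ carrying the Diracs. On each wedge the integrand equals $\mathrm{div}_{(t,x)}({\bf M}\,\phi,\,{\bf F}({\bf M})\,\phi)$, so the divergence theorem converts the bulk contributions into line integrals weighted by $\phi(t,\sigma_\beta t)$, producing precisely the jump combinations $-\sigma_\beta({\bf M}^--{\bf M}^+)+({\bf F}({\bf M}^-)-{\bf F}({\bf M}^+))$ across each ray. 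For the singular parts I would use $\partial_t\phi+\sigma_\beta\partial_x\phi=\tfrac{d}{dt}\phi(t,\sigma_\beta t)$ together with the monokinetic identity, collapsing the two Dirac terms into $\int_0^\infty {\bf M}_\beta^\delta(t)\,\tfrac{d}{dt}\phi(t,\sigma_\beta t)\,dt$, and then integrate by parts in $t$, the boundary terms vanishing by compact support of $\phi$ and by ${\bf M}_\beta^\delta(0)=0$.

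Collecting the coefficients of $\phi(t,\sigma_L t)$ and of $\phi(t,\sigma_R t)$, and using that the two rays are disjoint for $t>0$ so that $\phi$ may be localized near either one, the weak equation holds for every $\phi$ if and only if both coefficients vanish identically, i.e. $\tfrac{d}{dt}{\bf M}_\beta^\delta$ equals the corresponding jump combination. Since the latter is constant in $t$, integrating from $0$ with ${\bf M}_\beta^\delta(0)=0$ yields exactly the linear-in-$t$ relations (\ref{eq:rh_generalisee}). For the entropic statement I would rerun the computation verbatim with a general pair (\ref{eq:couple_entropy_flux}); the only change is that $\partial_t\eta+\partial_x q\le 0$ in $\mathcal D'$ forces the two ray-coefficients to be nonnegative when tested against $\phi\ge0$, and multiplying each resulting bracket by $-t<0$ (using that $\eta({\bf M}_\beta^\delta(t))$ is linear in $t$, hence $\tfrac{d}{dt}\eta({\bf M}_\beta^\delta)=\eta({\bf M}_\beta^\delta)/t$) gives precisely (\ref{eq:entropy_generalisee}).

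The step I expect to be delicate is the careful bookkeeping of the distributional derivatives of the moving Diracs: it is exactly the monokinetic identities ${\bf F}({\bf M}_\beta^\delta)=\sigma_\beta{\bf M}_\beta^\delta$ and $q({\bf M}_\beta^\delta)=\sigma_\beta\eta({\bf M}_\beta^\delta)$ that allow the transport term $\sigma_\beta\partial_x\phi$ coming from the flux to combine with $\partial_t\phi$ into a clean total derivative along the ray. Without this cancellation the $\delta$-contributions would not close into a single jump relation, so checking these identities, and the compatibility of ${\bf M}_\beta^\delta(t)$ with the monokinetic moment structure it is forced into, is the heart of the argument; the remaining manipulations are routine integration by parts and localization of test functions.
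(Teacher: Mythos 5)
Your proposal is correct and takes essentially the same route as the paper's own proof: test the ansatz against $\varphi\in\mathcal{C}^{\infty}_c(]0,\infty[\times\mathbb{R})$, split into the three constant wedges and the two rays, use the monokinetic identity $q({\bf M}_{\beta}^{\delta}(t))=\sigma_{\beta}\,\eta({\bf M}_{\beta}^{\delta}(t))$ so that the Dirac contributions close into total derivatives $\frac{d}{dt}\varphi(\sigma_{\beta}t,t)$ along the rays, integrate by parts, localize the test function near each ray, and integrate the resulting jump relations from $t=0$ using $m_{\beta}(0)=0$. Your divergence-theorem phrasing of the bulk terms is just the paper's Leibniz-rule manipulation in different clothing, and your explicit appeal to the linearity of $m_{\beta}(t)$ (forced by the Rankine--Hugoniot equalities) to pass between the differential and the integrated entropy inequalities makes precise a point the paper's proof leaves implicit.
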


\begin{proof}
We first recall that $\partial_t \eta + \partial_x q = 0$ in ${\mathcal{D}}^{'}(]0,\infty[ \times \mathbb{R})$ means that 
for any smooth function with compact support $\varphi \in C^{\infty}_c(]0,\infty[ \times \mathbb{R})$, we have
\vspace{-0.3cm}
\begin{equation*}
\begin{array}{rcl}
&<\partial_t \eta + \partial_x q,\varphi>=-<\eta,\partial_t \varphi>-<q,\partial_x \varphi> = &\\[2ex]
&- \int_0^{\infty} \int_{-\infty}^{+\infty} \eta(x,t) \partial_t \varphi(x,t) dxdt 
- \int_0^{\infty} \int_{-\infty}^{+\infty} q(x,t) \partial_x \varphi(x,t) dxdt = 0,&
\end{array}
\end{equation*}
where by definition (\ref{eq:solution_mesure})
$$
 \eta(x,t)=
\left\{
\begin{array}{lll}
 \eta({\bf M}_L), & x<\sigma_L t,\\
 \eta({\bf M}_L^{\delta}(t)) \delta(x-\sigma_L t), & x=\sigma_L t,\\
 \eta({\bf M}_{\star}),& \!\!\!\!\!\!\!\!\!\!\!\!\!\!\!\!\!\!\!\!\sigma_L < \frac{x}{t} < \sigma_R,\\
 \eta({\bf M}_R^{\delta}(t)) \delta(x-\sigma_R t),   & x=\sigma_R t,\\
 \eta({\bf M}_R), & x>\sigma_R t,\\
\end{array}
\right.
\quad
q(x,t)=
\left\{
\begin{array}{lll}
 q({\bf M}_L),& x<\sigma_L t,\\
 q({\bf M}_L^{\delta}(t)) \delta(x-\sigma_L t), & x=\sigma_L t,\\
 q({\bf M}_{\star}),  & \!\!\!\!\!\!\!\!\!\!\!\!\!\!\!\!\!\!\!\! \sigma_L < \frac{x}{t}  < \sigma_R,\\
 q({\bf M}_R^{\delta}(t)) \delta(x-\sigma_R t), & x=\sigma_R t,\\
 q({\bf M}_R),  & x>\sigma_R t.\\
\end{array}
\right.
$$
Here we note that $q({\bf M}_{\beta}^{\delta}(t)) = \sigma_{\beta} \eta({\bf M}_{\beta}^{\delta}(t))$ with 
$\eta({\bf M}_{\beta}^{\delta}(t)) = m_{\beta}(t) S(\sigma_{\beta})$, $\beta=L,R$.
For all $\varphi \in C^{\infty}_c(]0,\infty[ \times \mathbb{R})$ we thus have
$$
\begin{array}{l}
\displaystyle <\partial_t \eta + \partial_x q,\varphi> = \\ 
\ \\
\displaystyle
- \int_0^{\infty} dt \int_{-\infty}^{\sigma_L t} \eta({\bf M}_L) \partial_t \varphi(x,t) dx 
- \int_0^{\infty} \eta({\bf M}_L^{\delta}(t)) \partial_t \varphi(\sigma_L t,t) dt \\
\displaystyle 
- \int_0^{\infty} dt \int_{\sigma_L t}^{\sigma_R t} \eta({\bf M}_{\star}) \partial_t \varphi(x,t) dx 
- \int_0^{\infty} \eta({\bf M}_R^{\delta}(t)) \partial_t \varphi(\sigma_R t,t) dt \\
\displaystyle 
- \int_0^{\infty} dt \int_{\sigma_R t}^{\infty} \eta({\bf M}_R) \partial_t \varphi(x,t) dx
- \int_0^{\infty} dt \int_{-\infty}^{\sigma_L t} q({\bf M}_L) \partial_x \varphi(x,t) dx \\
\displaystyle 
- \int_0^{\infty} q({\bf M}_L^{\delta}(t)) \partial_x \varphi(\sigma_L t,t) dt 
- \int_0^{\infty} dt \int_{\sigma_L t}^{\sigma_R t} q({\bf M}_{\star}) \partial_x \varphi(x,t) dx \\ 
\displaystyle 
- \int_0^{\infty} q({\bf M}_R^{\delta}(t)) \partial_x\varphi(\sigma_R t,t) dt
- \int_0^{\infty} dt \int_{\sigma_R t}^{\infty} q({\bf M}_R) \partial_x \varphi(x,t) dx \\
\end{array}
$$
that is, using in particular $q({\bf M}_{\beta}^{\delta}(t)) = \sigma_{\beta} \eta({\bf M}_{\beta}^{\delta}(t))$, 
$$
\begin{array}{l}
\displaystyle <\partial_t \eta + \partial_x q,\varphi> = \\ 
\ \\
\displaystyle
= - \int_0^{\infty} dt \eta({\bf M}_L) \big( \frac{d}{dt} \int_{-\infty}^{\sigma_L t} \varphi(x,t) dx - \sigma_L \varphi(t,\sigma_Lt) \big) 
- \int_0^{\infty} q({\bf M}_L) \varphi(\sigma_L t,t) dt \\
\displaystyle 
- \int_0^{\infty} dt \eta({\bf M}_{\star}) \big( \frac{d}{dt} \int_{\sigma_L t}^{\sigma_R t} 
\varphi(x,t) dx - \sigma_R \varphi(t,\sigma_R t) + \sigma_L \varphi(t,\sigma_L t) \big) \\
\displaystyle 
- \int_0^{\infty} q({\bf M}_{\star}) \big( \varphi(\sigma_R t,t) - \varphi(\sigma_L t,t) \big) dt \\
\displaystyle 
- \int_0^{\infty} dt \eta({\bf M}_R) \big( \frac{d}{dt} \int_{\sigma_R t}^{\infty} 
\varphi(x,t) dx + \sigma_R \varphi(t,\sigma_R t) \big) 
+ \int_0^{\infty} q({\bf M}_R) \varphi(\sigma_R t,t) dt \\
\displaystyle 
- \int_0^{\infty} \eta({\bf M}_L^{\delta}(t)) \frac{d}{dt} [\varphi(\sigma_L t,t)] dt 
- \int_0^{\infty} \eta({\bf M}_R^{\delta}(t)) \frac{d}{dt} [\varphi(\sigma_R t,t)] dt \\
\end{array}
$$
$$
\begin{array}{l}
\displaystyle 
=  \int_0^{\infty} \varphi(\sigma_L t,t) \big( \sigma_L \eta({\bf M}_L) - q({\bf M}_L) \big) dt 
+ \int_0^{\infty} \varphi(\sigma_R t,t) \big( \sigma_R \eta({\bf M}_{\star}) - q({\bf M}_{\star}) \big) dt \\
\displaystyle 
- \int_0^{\infty} \varphi(\sigma_L t,t) \big( \sigma_L \eta({\bf M}_{\star}) - q({\bf M}_{\star}) \big) dt
- \int_0^{\infty} \varphi(\sigma_R t,t) \big( \sigma_R \eta({\bf M}_R) - q({\bf M}_R) \big) dt \\
\displaystyle 
- \int_0^{\infty} \eta({\bf M}_L^{\delta}(t)) \frac{d}{dt} [\varphi(\sigma_L t,t)] dt 
- \int_0^{\infty} \eta({\bf M}_R^{\delta}(t)) \frac{d}{dt} [\varphi(\sigma_R t,t)] dt  \\
\ \\
\displaystyle 
= \int_0^{\infty} \varphi(\sigma_L t,t) \big( \sigma_L (\eta({\bf M}_L) - \eta({\bf M}_{\star})) - 
(q({\bf M}_L) - q({\bf M}_{\star})) \big) dt \\
\displaystyle 
+ \int_0^{\infty} \varphi(\sigma_R t,t) \big( \sigma_R (\eta({\bf M}_{\star}) - \eta({\bf M}_{R})) - 
(q({\bf M}_{\star}) - q({\bf M}_{R})) \big) dt \\
\displaystyle 
- \int_0^{\infty} \eta({\bf M}_L^{\delta}(t)) \frac{d}{dt} [\varphi(\sigma_L t,t)] dt 
- \int_0^{\infty} \eta({\bf M}_R^{\delta}(t)) \frac{d}{dt} [\varphi(\sigma_R t,t)] dt
\\
\end{array}
$$
$$
\begin{array}{l}
\displaystyle 
= \int_0^{\infty} \varphi(\sigma_L t,t) \frac{d}{dt} 
\big( \sigma_L (\eta({\bf M}_L) - \eta({\bf M}_{\star})) t - 
(q({\bf M}_L) - q({\bf M}_{\star})) t + \eta({\bf M}_L^{\delta}(t)) \big) dt \\
\displaystyle 
+ \int_0^{\infty} \varphi(\sigma_R t,t) \frac{d}{dt} 
\big( \sigma_R (\eta({\bf M}_{\star}) - \eta({\bf M}_R)) t - 
(q({\bf M}_{\star}) - q({\bf M}_{R})) t + \eta({\bf M}_R^{\delta}(t)) \big) dt.
\end{array}
$$
Then, since $m_{\beta}(0) = 0$ and then $\eta({\bf M}_{\beta}^{\delta}(0))=0$, $\beta=L,R$, it is clear that (\ref{eq:solution_mesure}) is a measure solution of (\ref{eq:modele_bipic_short})-(\ref{eq:donnee_initiale}) if and only if 
(\ref{eq:rh_generalisee}) is valid for all $t \geq 0$, and an entropic measure solution if and only if in 
addition (\ref{eq:entropy_generalisee}) holds true
for all $t \geq 0$ and all 
$\eta = \rho_1 S(v_1) + \rho_2 S(v_2)$ and 
$q = \rho_1 v_1 S(v_1) + \rho_2 v_2 S(v_2)$ with $S(v) = v^{2\alpha}$, $\alpha \geq 2$. 
\end{proof}

\begin{rem}
Let us recall that $\eta({\bf M}_{\beta}^{\delta}(t)) = m_{\beta}(t) S(\sigma_{\beta})$. 
Since (\ref{eq:entropy_generalisee}) is made of equalities when $S(v) = 1$ and $S(v) = v$ (we get 
in these cases the first two components of (\ref{eq:rh_generalisee})), the validity 
of (\ref{eq:entropy_generalisee}) for all $S(v) = v^{2 \alpha}$, $\alpha \geq 2$, is equivalent to the validity 
of 
\begin{equation} \label{eq:entropy_generalisee_bis}
\left\{
\begin{array}{l}
 \sigma_L \big(\eta({\bf M}_L) - \eta({\bf M}_{\star})\big) - 
\big(q({\bf M}_L) - q({\bf M}_{\star})\big) \leq 0 \\
\ \\
 \sigma_R \big(\eta({\bf M}_{\star}) - \eta({\bf M}_R)\big) - 
\big(q({\bf M}_{\star}) - q({\bf M}_R)\big) \leq 0,
\end{array}
\right. 
\end{equation}
for all $S(v) = v^{2 \alpha} - \sigma_L^{2 \alpha} \frac{v-\sigma_R}{\sigma_L-\sigma_R} - \sigma_R^{2 \alpha} \frac{v-\sigma_L}{\sigma_R-\sigma_L}$, $\alpha \geq 2$. 
\end{rem}

\section{Examples of entropic solutions} \label{ees}

In this section, we propose three particular entropic solutions. The first one models the collision of two particles 
packets with free boundary $\mathcal C^1$ smooth solution, that is a solution for which an exact link with the kinetic level is preserved and for which the entropy equation is exactly satisfied. In such a situation the four-moment model does not develop $\delta$-shock Dirac delta functions and is actually 
able to properly represent the crossing of the two packets which correspond to the dynamics at the kinetic level. 
The second one models the collision of four particles packets. In this case and as expected since the number of moments is set to four, 
the entropic solutions involves two $\delta$-shock Dirac delta functions singularities. Whereas the first case corresponds to a connection from the interior of the moment space to the frontier through a contact discontinuity,  or free boundary solution, resulting in an isolated point at the frontier, we consider in a third example a smooth connection to the frontier of the moment space, such that the point at the frontier is an accumulation point of a trajectory inside the moment space.
 
\subsection{Collision of two particles packets}\label{col_two}

We consider a Riemann initial data (\ref{eq:donnee_initiale}) 
where ${\bf M}_L = {\bf M}({\bf U}_L)$ and ${\bf M}_R={\bf M}({\bf U}_R)$ are such that 
\vspace{-0.2cm}
$$
{\bf U}_L = \frac{1}{2}
\left(
\begin{array}{c}
 \rho_L \\
 \rho_L \\
 \rho_L v_L \\
 \rho_L v_L
\end{array}
\right)
\quad
\mbox{and}
\quad 
{\bf U}_R = \frac{1}{2}
\left(
\begin{array}{c}
 \rho_R \\
 \rho_R \\
 \rho_R v_R \\
 \rho_R v_R
\end{array}
\right)
$$
for two given densities $\rho_L >0$ and $\rho_R > 0$ and velocities $v_L > 0$ and $v_R < 0$.
We recall that the function ${\bf M} = {\bf M}({\bf U})$ is defined by (\ref{eq:quadrature}).
We define 
\begin{equation} \label{eq:solution_mesure_cas1}
 {\bf M}(x,t)=
\left\{
\begin{array}{lll}
 {\bf M}_L & \mbox{if} & x<v_R t,\\
 {\bf M}_{\star} & \mbox{if} & v_R t < x < v_L t,\\
 {\bf M}_R & \mbox{if} & x>v_L t,\\
\end{array}
\right.
\end{equation}
with ${\bf M}_{\star} = {\bf M}({\bf U}_{\star})$ given by 
${\bf U}_{\star} = \left(\rho_L, 
 \rho_R,  \rho_L v_L, \rho_R v_R \right)^t$.
Our objective here is to prove that the following Dirac delta functions free solution is an entropy 
solution of (\ref{eq:modele_bipic_short})-(\ref{eq:donnee_initiale}). Conditions 
(\ref{eq:rh_generalisee}) and (\ref{eq:entropy_generalisee}) write here
\begin{equation} \label{eq:rh_generalisee_cas1}
\left\{
\begin{array}{l}
 v_R ({\bf M}_L - {\bf M}_{\star}) - 
\big({\bf F}({\bf M}_L) - {\bf F}({\bf M}_{\star})\big) = 0 \\
\ \\[-1ex]
 v_L ({\bf M}_{\star} - {\bf M}_R) - 
\big({\bf F}({\bf M}_{\star}) - {\bf F}({\bf M}_R)\big) = 0.
\end{array}
\right.
\end{equation}
and 
\vspace{-0.3cm}
\begin{equation} \label{eq:entropy_generalisee_cas1}
\left\{
\begin{array}{l}
 v_R \big(\eta({\bf M}_L) - \eta({\bf M}_{\star})\big) - 
\big(q({\bf M}_L) - q({\bf M}_{\star})\big) \leq 0 \\
\ \\[-1ex]
 v_L \big(\eta({\bf M}_{\star}) - \eta({\bf M}_R)\big) - 
\big(q({\bf M}_{\star}) - q({\bf M}_R)\big) \leq 0,
\end{array}
\right.
\end{equation} 
with $\eta({\bf U}) = \rho_1 S(v_1) + \rho_2 S(v_2)$ and 
$q({\bf U}) = \rho_1 v_1 S(v_1) + \rho_2 v_2 S(v_2)$
for all 
$S(v) = v^{2\alpha}$ with $\alpha \geq 2$. We will focus only on the first equality of (\ref{eq:rh_generalisee_cas1}) 
and the first inequality of (\ref{eq:entropy_generalisee_cas1}), the second ones being treated similarly. We clearly have 
$$
{\bf M}_{L} - {\bf M}_{\star} =
\left(
\begin{array}{c}
 \rho_L \\
 \rho_L v_L \\
 \rho_L v_L^2 \\
 \rho_L v_L^3
\end{array}
\right)
-
\left(
\begin{array}{c}
 \rho_L + \rho_R\\
 \rho_L v_L + \rho_R v_R\\
 \rho_L v_L^2 + \rho_R v_R^2\\
 \rho_L v_L^3 + \rho_R v_R^3\\
\end{array}
\right)
=
- \left(
\begin{array}{c}
 \rho_R\\
 \rho_R v_R\\
 \rho_R v_R^2\\
 \rho_R v_R^3\\
\end{array}
\right),
$$
while 
$$
{\bf F}({\bf M}_{L}) - {\bf F}({\bf M}_{\star}) =
\left(
\begin{array}{c}
 \rho_L v_L\\
 \rho_L v_L^2 \\
 \rho_L v_L^3 \\
 \rho_L v_L^4
\end{array}
\right)
-
\left(
\begin{array}{c}
 \rho_L v_L + \rho_R v_R\\
 \rho_L v_L^2 + \rho_R v_R^2\\
 \rho_L v_L^3 + \rho_R v_R^3\\
 \rho_L v_L^4 + \rho_R v_R^4\\
\end{array}
\right)
=
- \left(
\begin{array}{c}
 \rho_R v_R\\
 \rho_R v_R^2\\
 \rho_R v_R^3\\
 \rho_R v_R^4\\
\end{array}
\right).
$$
It is then clear that the first equality of (\ref{eq:rh_generalisee_cas1}) holds true. Let us now check that the proposed Riemann solution fulfills the entropy condition. We clearly have 
\begin{equation*}
\begin{array}{rcl}
 &v_R \big(\eta({\bf M}_L) - \eta({\bf M}_{\star})\big) - 
\big(q({\bf M}_L) - q({\bf M}_{\star})\big) & \\
&=v_R \big( \rho_L S(v_L) - (\rho_L S(v_L) + \rho_R S(v_R)) \big) - 
\big( \rho_L v_L S(v_L) - (\rho_L v_ L S(v_L) + \rho_R v_R S(v_R)) \big) 
 &\\
 &=0&\\[-1ex]
\end{array}
\end{equation*}
which allows to prove that the proposed solution is an entropic smooth solution.

\subsection{Collision of four particles packets}

We consider a Riemann initial data (\ref{eq:donnee_initiale}) 
where ${\bf M}_L = {\bf M}({\bf U}_L)$ and ${\bf M}_R={\bf M}({\bf U}_R)$ are such that 
$$
{\bf U}_L = \frac{1}{2}
\left(
\begin{array}{c}
 \rho \\
 \rho \\
 \rho v_{1} \\
 \rho v_{2}
\end{array}
\right)
\quad
\mbox{and}
\quad 
{\bf U}_R = \frac{1}{2}
\left(
\begin{array}{c}
 \rho \\
 \rho \\
 -\rho v_{2} \\
 -\rho v_{1}
\end{array}
\right)
$$
for a given density $\rho >0$ and two velocities $v_{2} > v_{1} > 0$. 
We have 
$$
{\bf M}_L = \frac{1}{2}
\left(
\begin{array}{c}
 2 \rho \\
 \rho (v_{1} + v_{2})\\
 \rho (v_{1}^2 + v_{2}^2) \\
 \rho (v_{1}^3 + v_{2}^3)
\end{array}
\right)
\quad
\mbox{and}
\quad 
{\bf M}_R = \frac{1}{2}
\left(
\begin{array}{c}
 2 \rho \\
 -\rho (v_{1} + v_{2})\\
 \rho (v_{1}^2 + v_{2}^2) \\
 -\rho (v_{1}^3 + v_{2}^3)
\end{array}
\right)
$$
$$
{\bf F}({\bf M}_L) = \frac{1}{2}
\left(
\begin{array}{c}
 \rho (v_{1} + v_{2})\\
 \rho (v_{1}^2 + v_{2}^2) \\
 \rho (v_{1}^3 + v_{2}^3) \\
 \rho (v_{1}^4 + v_{2}^4)
\end{array}
\right)
\quad \mbox{and} \quad 
{\bf F}({\bf M}_R) = \frac{1}{2}
\left(
\begin{array}{c}
 -\rho (v_{1} + v_{2})\\
 \rho (v_{1}^2 + v_{2}^2) \\
 -\rho (v_{1}^3 + v_{2}^3) \\
 \rho (v_{1}^4 + v_{2}^4)
\end{array}
\right).
$$
We define 
\begin{equation} \label{eq:solution_mesure_cas2}
 {\bf M}(x,t)=
\left\{
\begin{array}{lll}
 {\bf M}_L & \mbox{if} & x<- \sigma t,\\
 {\bf M}_L^{\delta}(t) \delta(x+\sigma t) & \mbox{if} & x=-\sigma t,\\
 {\bf M}_{\star} & \mbox{if} & -\sigma t < x < \sigma t,\\
 {\bf M}_R^{\delta}(t) \delta(x-\sigma t) & \mbox{if} & x=\sigma t,\\
 {\bf M}_R & \mbox{if} & x>\sigma t,\\
\end{array}
\right.
\end{equation}
with $\sigma > 0$, ${\bf M}_{\star} = {\bf M}({\bf U}_{\star})$ given by 
$$
{\bf U}_{\star} = \frac{1}{2}
\left(
\begin{array}{c}
 \rho_{\star} \\
 \rho_{\star} \\
 - \rho_{\star} v_{\star} \\
 \rho_{\star} v_{\star}
\end{array}
\right), \quad 
{\bf M}_{\star} = 
\left(
\begin{array}{c}
 \rho_{\star} \\
 0\\
 \rho_{\star} v_{\star}^2 \\
 0
\end{array}
\right), \quad \rho_{\star} > 0, \quad v_{\star} > 0,
$$
and ${\bf M}_L^{\delta}(t)$, ${\bf M}_R^{\delta}(t)$ given by 
$$
{\bf M}_L^{\delta}(t) = m(t)
\left(
\begin{array}{c}
 1 \\
 -\sigma \\
 \sigma^2 \\
 -\sigma^3
\end{array}
\right), \quad 
{\bf M}_R^{\delta}(t) = m(t)
\left(
\begin{array}{c}
 1 \\
 \sigma \\
 \sigma^2 \\
 \sigma^3
\end{array}
\right), \quad {\bf F}({\bf M}_{\star}) = 
\left(
\begin{array}{c}
 0\\
 \rho_{\star} v_{\star}^2 \\
 0 \\
 \rho_{\star} v_{\star}^4 \\
\end{array}
\right),
$$ 
with $m(t) \geq 0$. 
The generalized Rankine-Hugoniot jump conditions (\ref{eq:rh_generalisee}) write here 
$$
\left\{
\begin{array}{l}
 -\sigma ({\bf M}_L - {\bf M}_{\star}) t - 
\big({\bf F}({\bf M}_L) - {\bf F}({\bf M}_{\star})\big) t + {\bf M}_L^{\delta}(t) = 0 \\
\ \\
 \sigma ({\bf M}_{\star} - {\bf M}_R) t - 
\big({\bf F}({\bf M}_{\star}) - {\bf F}({\bf M}_R)\big) t + {\bf M}_R^{\delta}(t) = 0.
\end{array}
\right.
$$
that is, equivalently
\begin{equation} \label{eq:rh_gene_cas2}
\left\{
\begin{array}{l}
\displaystyle 2 \sigma {\bf M}_{\star} - \sigma ({\bf M}_L + {\bf M}_{R}) + 
({\bf F}({\bf M}_R) - {\bf F}({\bf M}_{L})) + \frac{m(t)}{t} 
\left(
\begin{array}{c}
 2\\
 0 \\
 2 \sigma^2 \\
 0 \\
\end{array}
\right)
 = 0, \\
\ \\
\displaystyle 2 {\bf F}({\bf M}_{\star}) + \sigma ({\bf M}_R - {\bf M}_{L}) - 
({\bf F}({\bf M}_L) + {\bf F}({\bf M}_{L})) - \frac{m(t)}{t} 
\left(
\begin{array}{c}
 0\\
 2 \sigma \\
 0 \\
 2 \sigma^3 \\
\end{array}
\right)
 = 0.
\end{array}
\right.
\end{equation}
This is made of eight equalities, four are trivial (zero equals zero), so that four are left 
to determine the four unknowns $\rho_{\star}$, $v_{\star}$, $\sigma$ and $m(t)/t$. We propose below to numerically 
solve this nonlinear system for a specific set of values for $\rho$, $v_1$ and $v_2$.\\ 

\begin{rem} 
We conjecture existence and uniqueness of a solution to this nonlinear system. Indeed, the initial condition involves four different velocities while the model is able to represent two different velocities only. 
More precisely and by analogy with the usual pressureless gas dynamics model (see \cite{Bouchut03}, \cite{Bouchut94}), 
velocity $v_1$ is to "bump" into velocities $-v_1$ and $-v_2$ to create a first Dirac delta function. By symmetry, another Dirac delta function is expected.
\end{rem}

Regarding the entropy inequalities (\ref{eq:entropy_generalisee}), we first remark that for $S(v) = v^{2\alpha}$,
$$
\begin{array}{l}
\displaystyle \eta({\bf M}_L) = \eta({\bf M}_R) = \frac{\rho}{2}(S(v_1) + S(v_2)) \\
\eta({\bf M}_{\star}) = \rho_{\star} S(v_{\star})_{\vphantom{\big(}} , \\
\eta({\bf M}_L^{\delta}(t)) = \eta({\bf M}_R^{\delta}(t)) = m(t) S(\sigma),
\end{array}
$$
while 
\vspace{-0.5cm}
$$
\begin{array}{l}
\displaystyle q({\bf M}_L) = -q({\bf M}_R) = \frac{\rho}{2}(v_1 S(v_1) + v_2 S(v_2)), \quad
q({\bf M}_{\star}) = 0.\\[-1.5ex]
\end{array}
$$
As an immediate consequence, both inequalities in (\ref{eq:entropy_generalisee}) are equivalent and the entropy condition
is 
\begin{equation} \label{eq:edr_cas2}
\sigma \big( \rho_{\star} S(v_{\star}) - \frac{\rho}{2} (S(v_{1}) + S(v_{2})) \big)
- \frac{\rho}{2} (v_1 S(v_{1}) + v_2 S(v_{2})) + \frac{m(t)}{t} S(\sigma) \leq 0.
\end{equation}
{\bf A concrete example.} We propose to take $\rho = 1$, $v_1 = 0.8$ and $v_2=1.2$. Numerically solving (\ref{eq:rh_gene_cas2}) 
gives $\rho_{\star} = 1.88265$, $v_{\star} = 1.06026$, $\sigma = 0.87983$ and $m(t)/t = 0.22342$. If the left-hand side of (\ref{eq:edr_cas2}), which 
represents the entropy dissipation rate associated with $S(v) = v^{2\alpha}$, is denoted $D(\alpha)$, a simple 
calculation gives for instance $D(2)=-0.27324$, $D(3)=-0.86854$, $D(4)=-1.88678$,... The proposed solution (\ref{eq:solution_mesure_cas2}) is then 
actually an entropy measure 
solution of the four-moment model. 
Such an exact entropic solution will be used in the following to prove the relevance of the numerical scheme proposed hereafter 
with respect to the exact solution 
when singularities are present.

\subsection{Piecewise linear solution connected with the frontier of the moment space}
\label{trans}

As a last example, we introduce a piecewise linear solution which allows to connect zones inside the moment space with zone at the frontier within the proper framework introduced in subsection \ref{border}.
Particles are initiated in the domain $[0, \, 0.5]$. In the domain $[0,\, 0.1]$, a monomodal velocity distribution is reconstructed, with $v_1=v_2=1$,
and $\rho_1=\rho_2=0.5$. On the contrary, a bimodal velocity distribution is reconstructed in the domain $[0.1,\, 0.4]$, with $\rho_1=\rho_2=0.5$ and for the 
abscissas, $v_1= 1 + \frac{x-0.1}{0.3}$ and $v_2=1$. The initial conditions are represented in Fig. \ref{fig:ci_transition}.

 \begin{figure}[h]
  \begin{center}
         \includegraphics[width=0.27\textwidth]{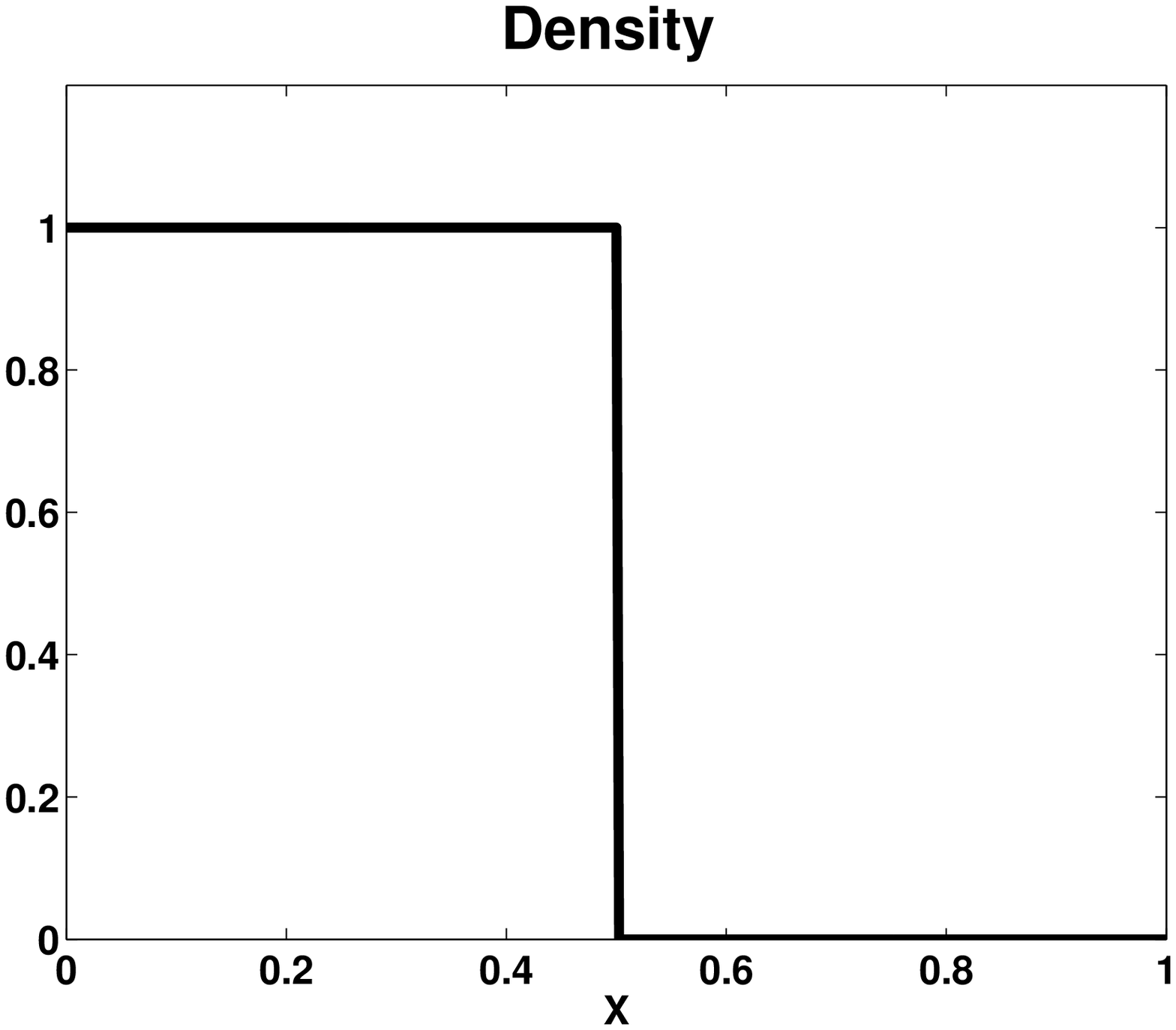}
          \includegraphics[width=0.27\textwidth]{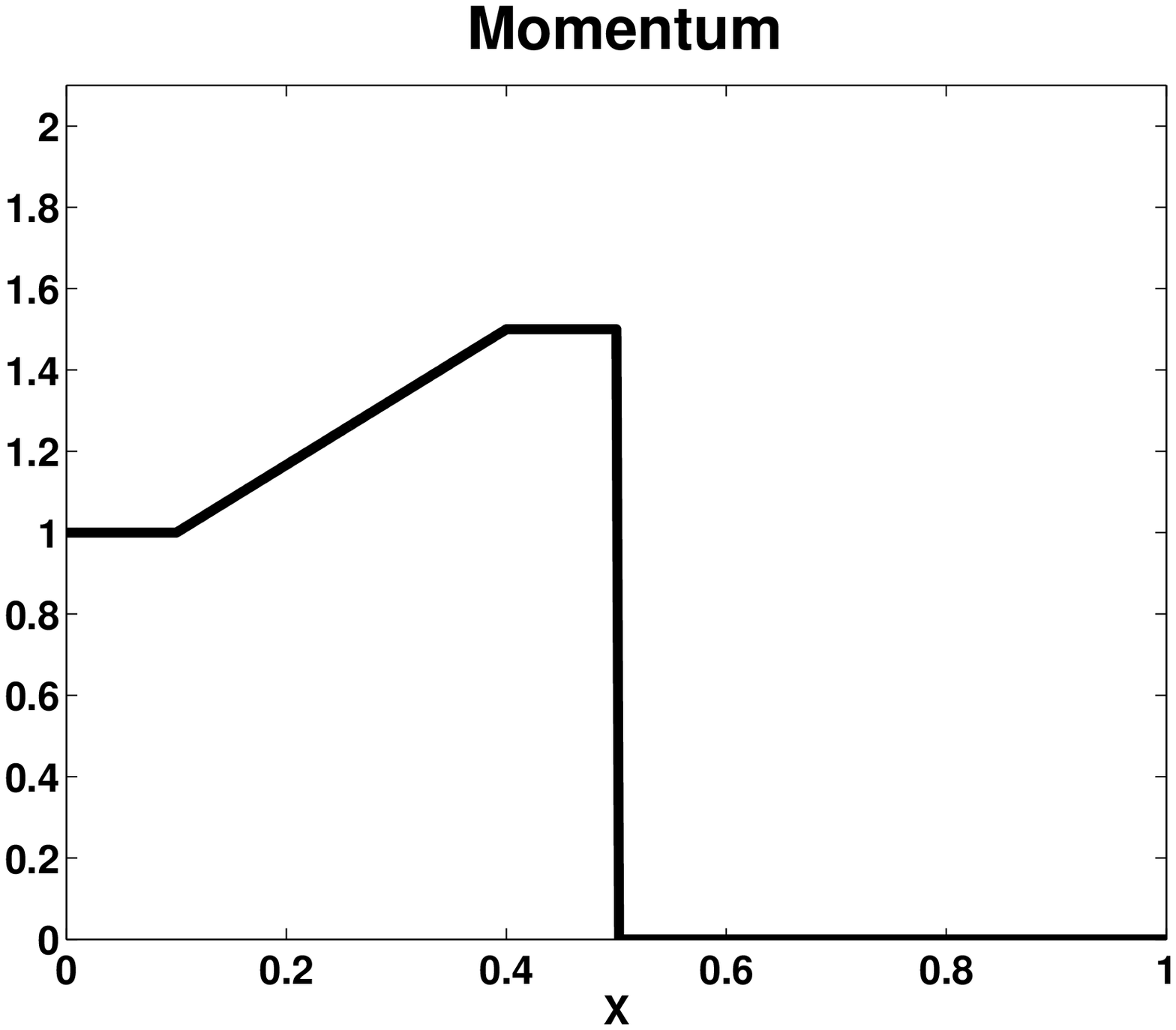} \\
           \includegraphics[width=0.27\textwidth]{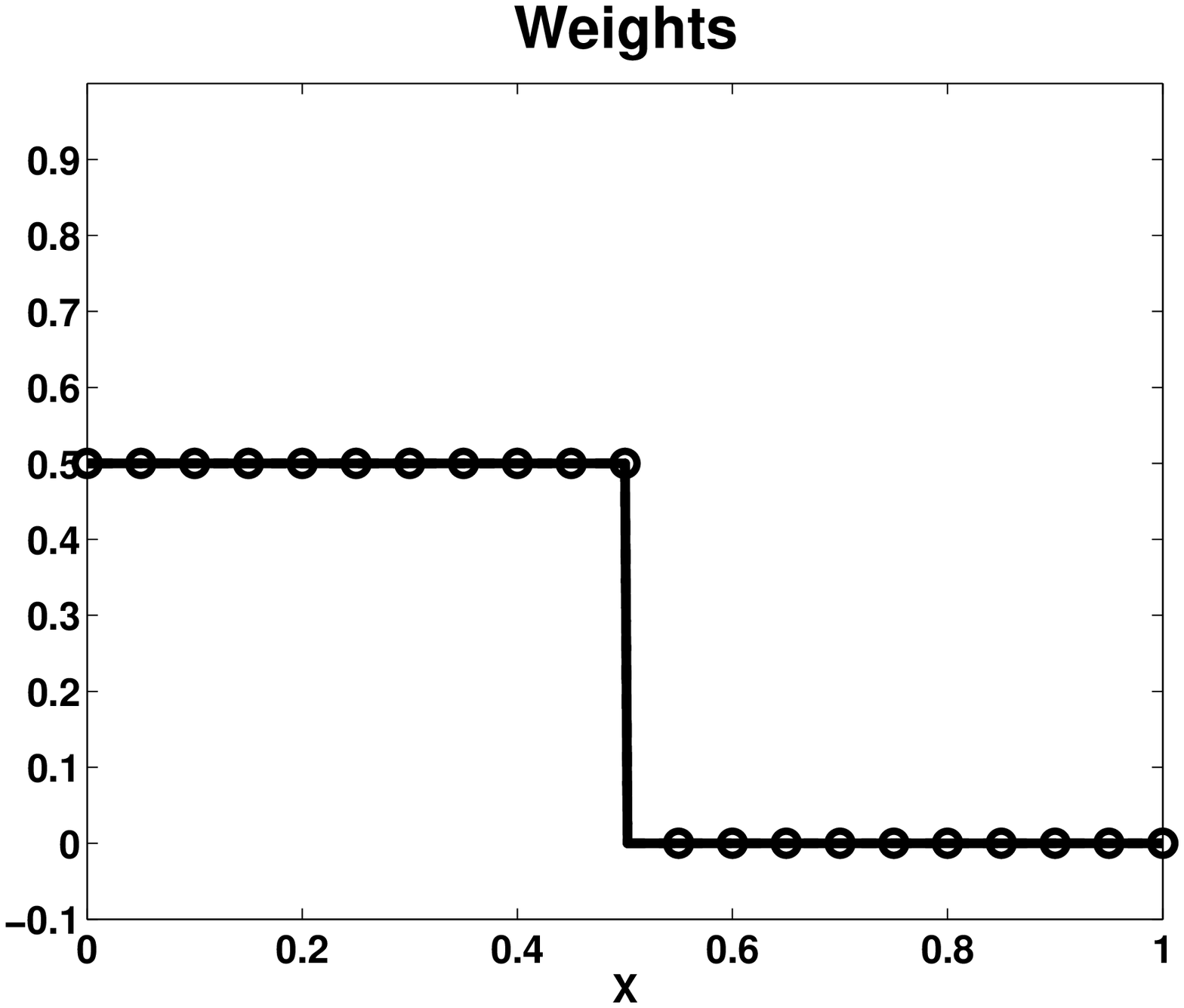}
            \includegraphics[width=0.27\textwidth]{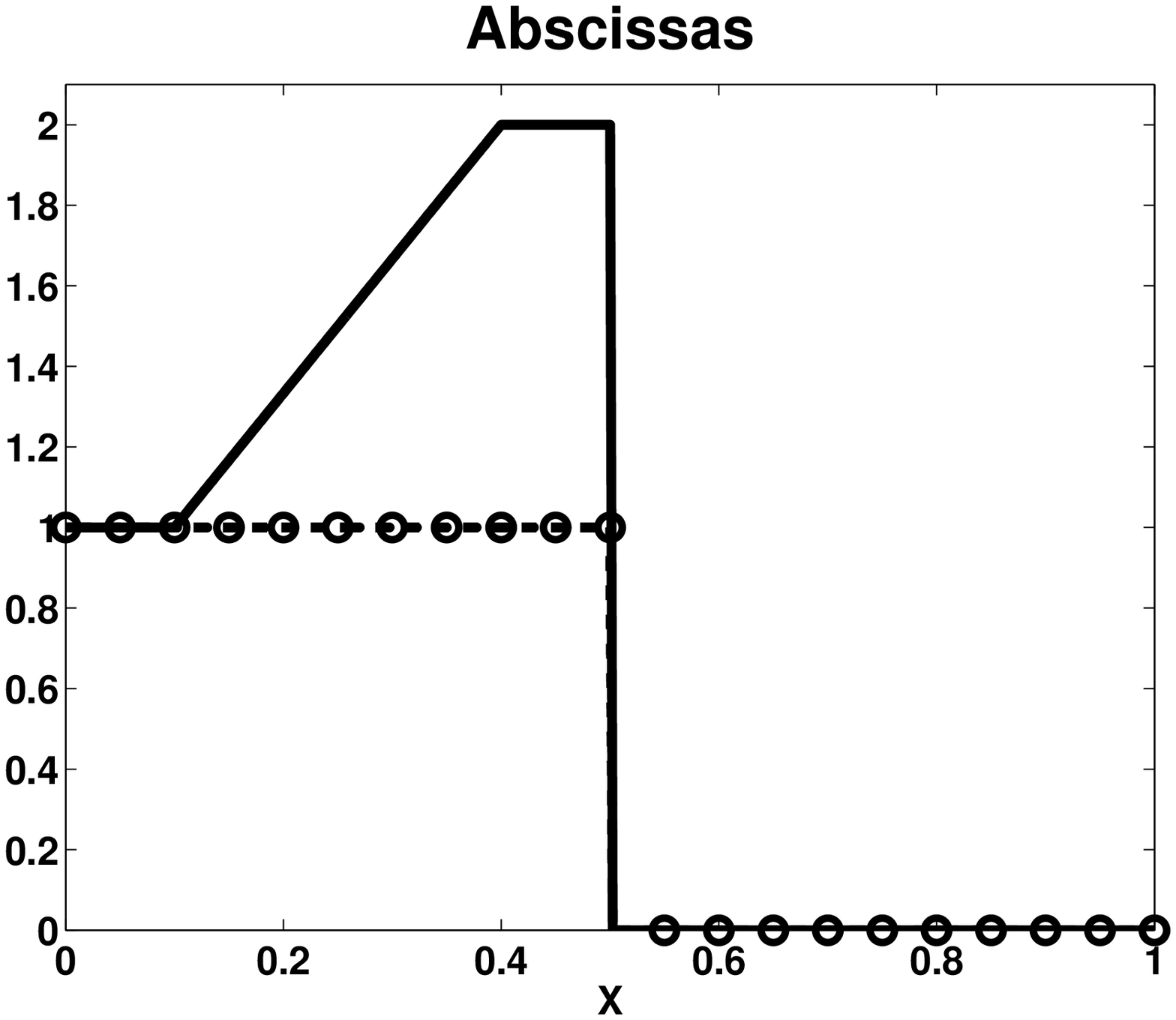}
	\caption{Moment dynamics for a free boundary connecting areas where $e=0$ and $e>0$. Initial conditions. Top-left: $M_0$. Top-right: $M_1 $, Bottom-left: weights. Bottom-right: abscissas. The solid line corresponds to ($\rho_1,v_1$), the dashed line with circles to ($\rho_2,v_2$). }
		\label{fig:ci_transition} 
  \end{center}
\end{figure}

 The ground difference
with the first test case is that the transition between the two zones is smooth, 
and so the numerical strategy to account for this transition is  important.
The analytical solution of this problem, in smooth areas, consists of a decoupled transport of each of the quadrature nodes
as two independant pressureless gas as showed in system (\ref{eq:2gsp}). This comes from the fact that the number of Dirac delta functions
reconstructed from the moments, two in this case, is always sufficient to capture the problem dynamics.
The equivalence between the kinetic and
macroscopic equations is preserved, and the solution in terms of moments satisfies the entropy equation.
Therefore, the solution  is the superposition of a  translation at constant velocity $v_2=1$, and a  transport with the following 
velocity field:
\begin{equation}
 v_1 =  \left \{ \begin{array}{cc}  1 &  x \in [0, \, 0.1] \\
 						1 + \frac{x-0.1}{0.3} & x \in [0.1,\, 0.4] \\
						2 & x \in [0.4,\, 0.5]
		    \end{array} \right.
\end{equation}

 \begin{figure}[h]
  \begin{center}
             \includegraphics[width=0.33\textwidth]{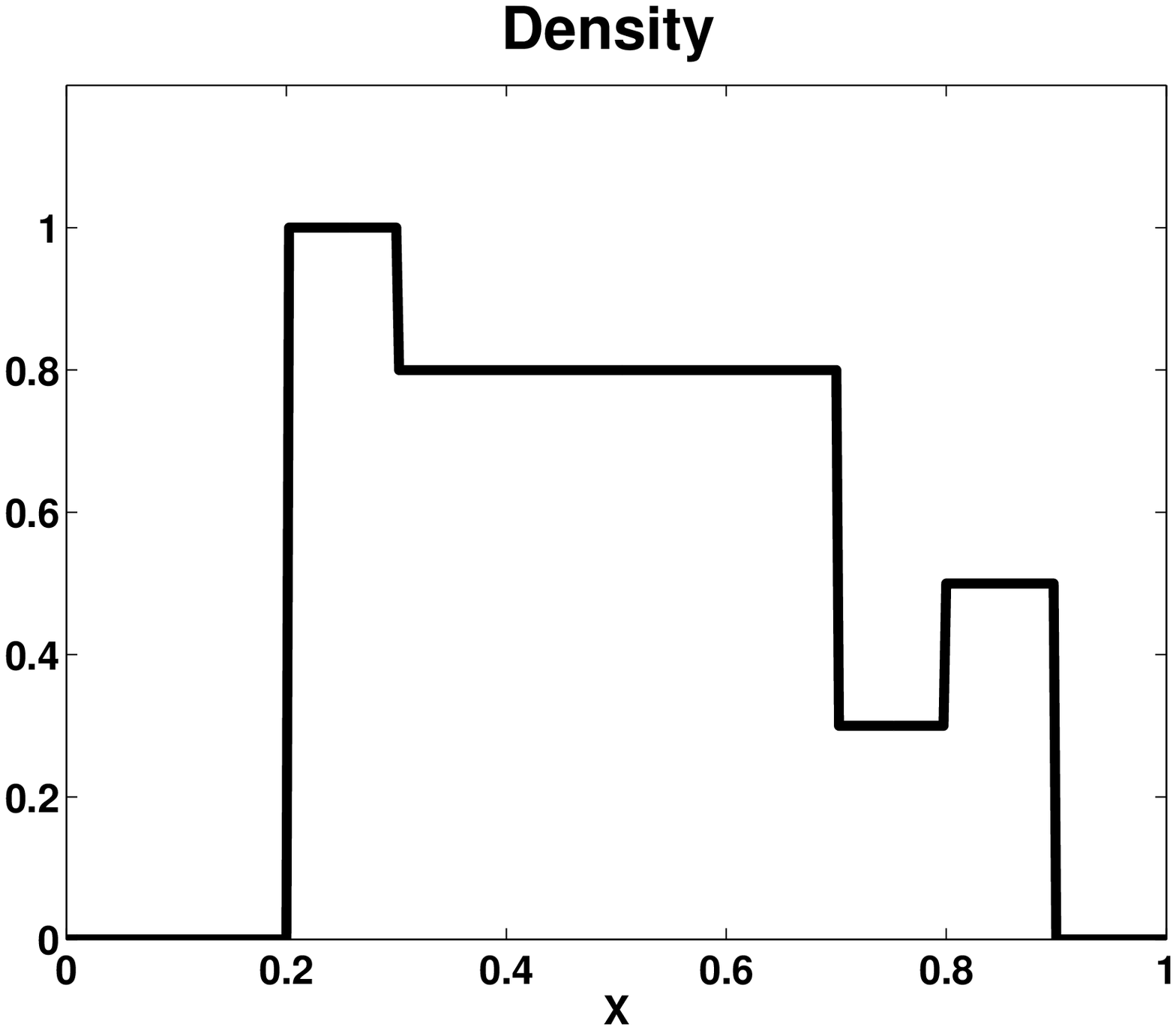}
          \includegraphics[width=0.33\textwidth]{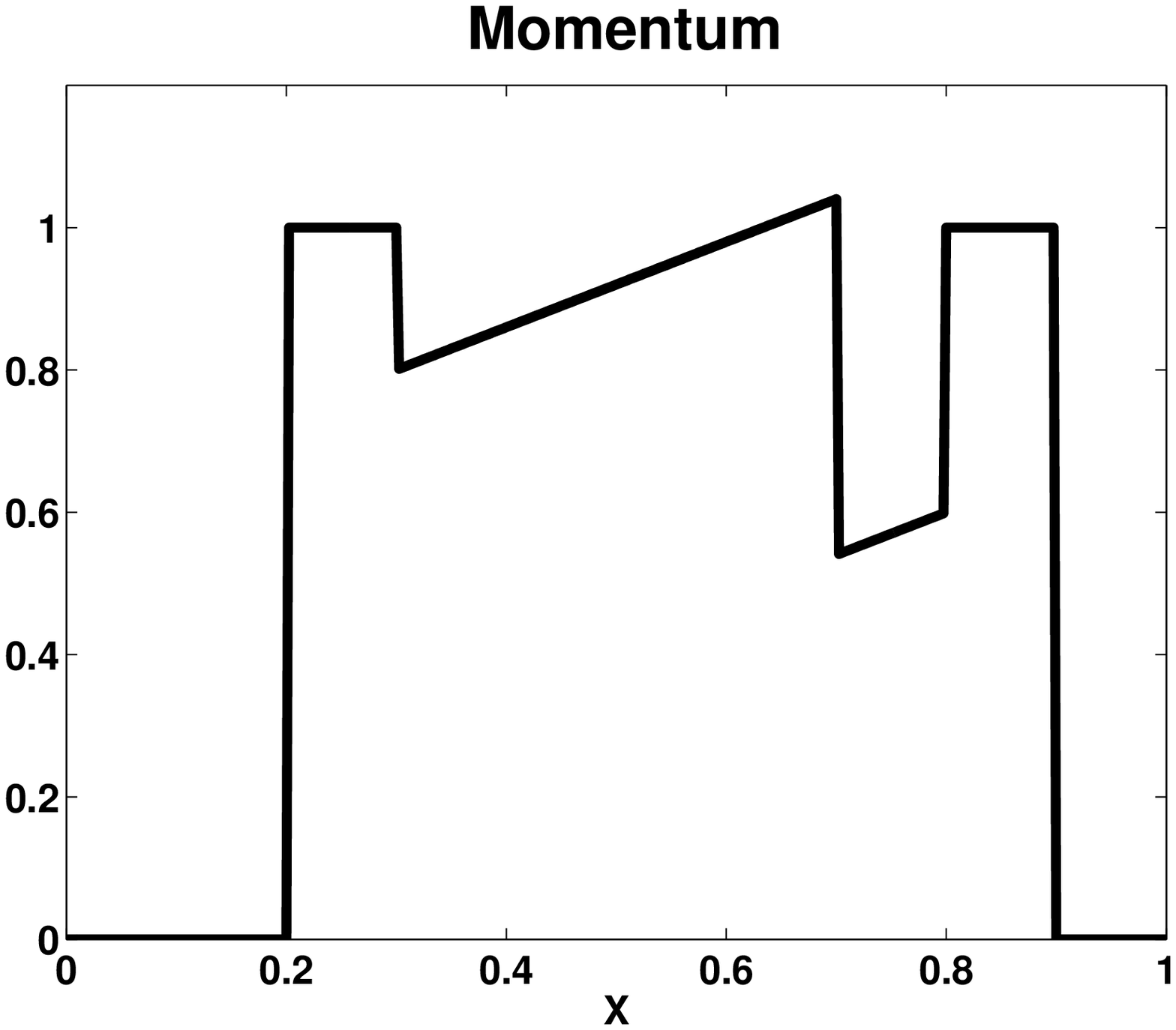}
              \includegraphics[width=0.33\textwidth]{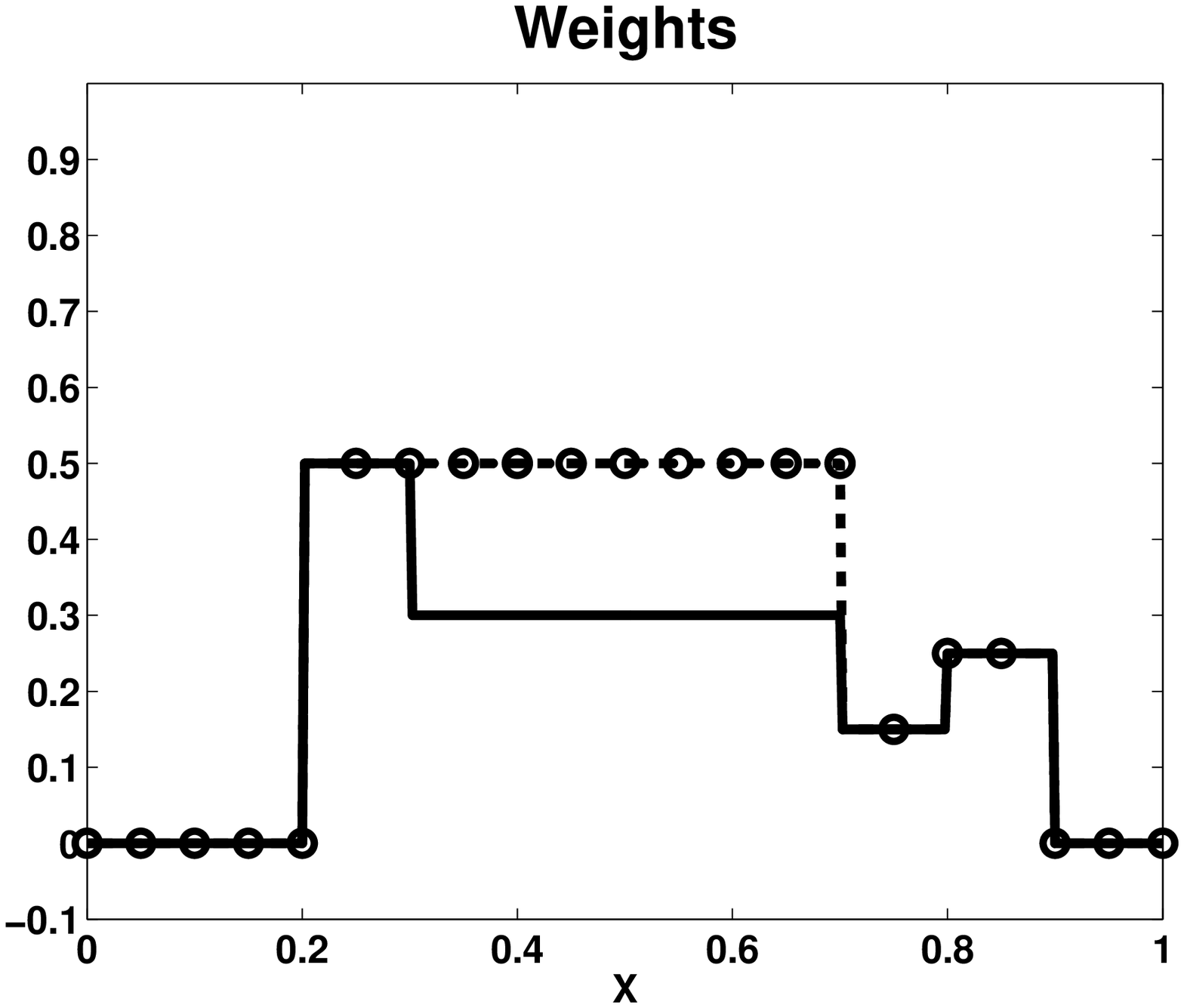}
            \includegraphics[width=0.33\textwidth]{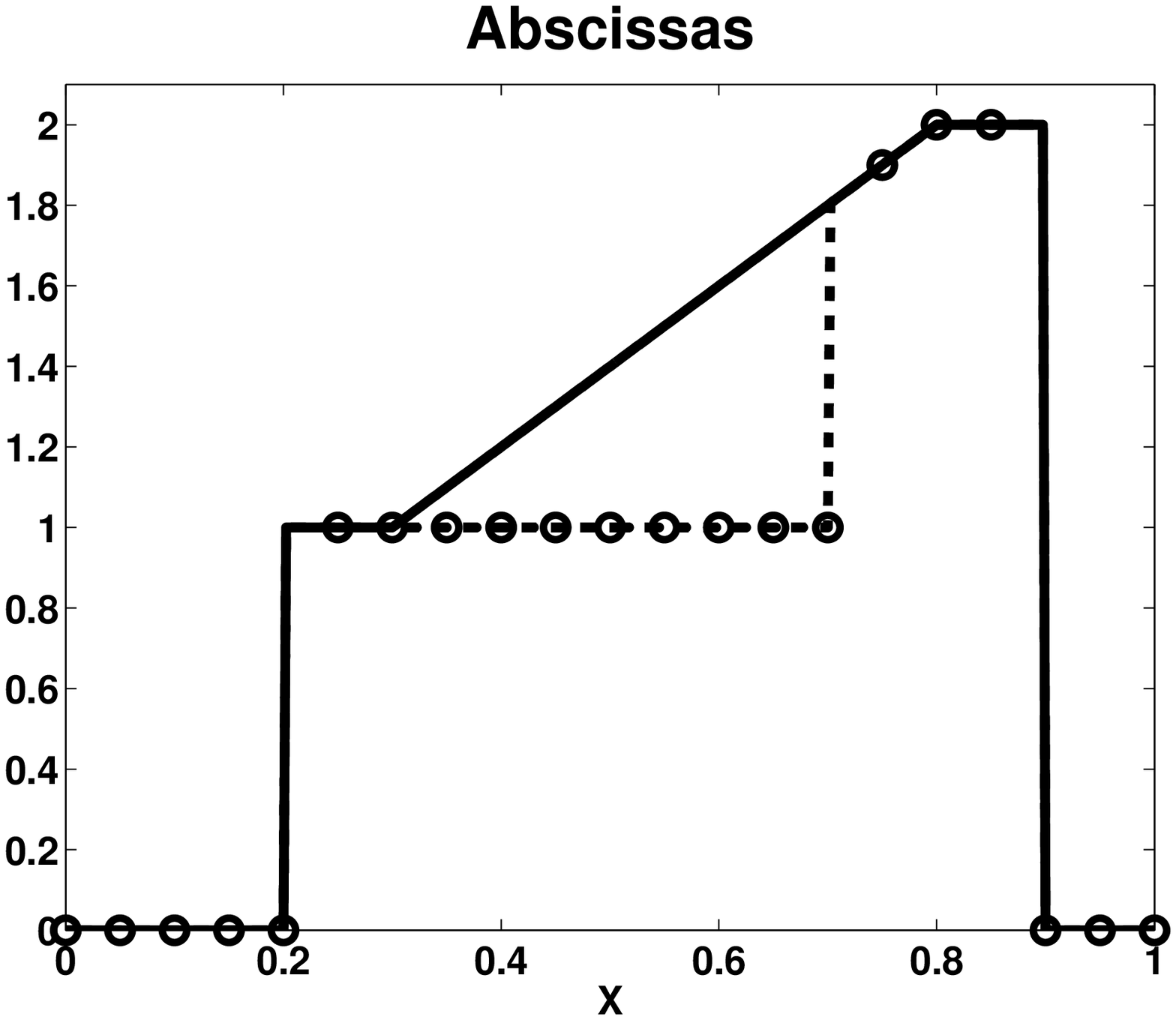}
	\caption{Moment dynamics for a free boundary connecting areas where $e=0$ and $e>0$. Solution at $t=0.2$. Top-left: $M_0$. Top-right: $M_1 $. Bottom-left: weights, Bottom-right:abscissas. The solid line corresponds to ($\rho_1,v_1$), the dashed line with circles to ($\rho_2,v_2$). }
		\label{fig:trans_ana} 
  \end{center}
\end{figure}

 The analytical solution, displayed in Fig.\, \ref{fig:trans_ana},
   has  two fronts at $x=0.2$ and $x=0.9$ moving with a velocity $v=1$ and $v=2$ respectively. The first front 
 corresponds to particles initiated with velocity $v=1$ between $x=0$ and $x=0.1$. The second front corresponds to particles initiated 
 with velocity $v=2$ between $x=0.4$ and $x=0.5$. The square wave between $x=0.8$ and $x=0.9$ is the final location of the particles initiated
 with velocity $v=2$ between $x=0.4$ and $x=0.5$.
The value of $\rho_1$ between $x=0.3$ and $x=0.8$ corresponds to the expansion of the 
density field due to transport with a linear velocity field with positive slope. The value of the density in that area is the solution of the equation
$\partial_t \rho_1 + v_1\partial_x \rho_1 = -\rho_1\partial_x v_1$, which yields $\rho_1=0.3$ at time $t=0.2$ \footnote{In the interval $[0.8, 0.9]$, although $\rho_2$ should be null (the square wave at velocity $v_2=1$ should be bounded 
between the front $x=0.2$ and $x=0.7$) we computed $\rho_2=0.5$ and $v_2=2$. This is consistent 
with the conditions  in Section\, \ref{border} for  moment vectors at the frontier of the moment space.}.

According to the initial conditions, both weights have the same profile, the quantities ${q}/{(M_0\,e)}$ and ${q}/{e^{3/2}}$ are null.
At time $t=0.2$, the weight profiles are different in the interval $[0.3, 0.8]$ corresponding to the expansion of $\rho_1$. Therefore, ${q}/{e^{3/2}}$ 
is non null, as well as $\frac{q}{M_0e}$, since the velocities have also different values, and can be exactly calculated (see Fig. \ref{fig:res_transition_2}).



\section{Numerical simulations via kinetic schemes}

This section is devoted to the discretization of (\ref{eq:modele_bipic_short})-(\ref{eq:M4})-(\ref{eq:quadrature}). 
As already stated, 
we use as a building block a natural first-order kinetic scheme already proposed in the literature \cite{jin03,gosse03,dfv08} and 
briefly recalled 
here for the sake of completeness.

Let us first introduce a time step $\Delta t>0$ and a space step 
$\Delta x>0$ that we assume to be constant for simplicity. We set $\lambda = {\Delta t}/{\Delta x}$ and define
the mesh interfaces $x_{j+1/2}=j \Delta x$ for $j \in \mathbb{Z}$, and the 
intermediate times $t^n=n \Delta t$ for $n \in \mathbb{N}$. In the sequel,
${\bf M}^n_j$ denotes the approximate value of ${\bf M}$ at time 
$t^n$ and on the cell $\mathcal{C}_j = [x_{j-1/2}, x_{j+1/2})$. 
For $n=0$, we set  ${\bf M}^0_j = \frac{1}{\Delta x} \int_{x_{j-1/2}}^{x_{j+1/2}} {\bf M}_0(x) dx, \quad j \in \mathbb{Z}$,
where ${\bf M}_0(x)$ is the initial condition. \\
Let us now assume as given $({\bf M}^n_j)_{j \in \mathbb{Z}}$ the sequence in $\Omega$ of approximate values at time 
$t^n$. In order to advance it to the next time level $t^{n+1}$, the kinetic scheme is decomposed into 
two steps. \\
\ \\ 
{\it First step : transport} ($t^n \to t^{n+1-}$) \\
We first set ${\bf U}^n_j = {\bf U}({\bf M}^n_j)$ and define the function 
$(x,v) \to {f}^n(x,v)$ by
$$
{f}^n(x,v) = (\rho_1)^n_j \delta \big(v - (v_1)^n_j\big) + (\rho_2)^n_j \delta \big(v - (v_2)^n_j\big),\quad
\forall \,\, (x,v) \in \mathcal{C}_j \times \mathbb{R}, \,\,\, j \in \mathbb{Z}.
$$ 
We then solve the transport equation 
\begin{equation} \label{eq:te}
\left\{
\begin{array}{l}
 \partial_t f + v \partial_x f = 0, \quad (x,v) \in \mathbb{R} \times \mathbb{R}, \\
 f(t=0,x,v) = f^n(x,v),
\end{array}
\right.
\end{equation}
the solution of which is given by $f(t,x,v) = f^n(x-vt,v)$. \\
At last, we set ${f}^{n+1-}(x,v) = f^n(x-v \Delta t,v)$. \\
\ \\
{\it Second step : collapse} ($t^{n+1-} \to t^{n+1}$) \\
The first four moments at time $t^{n+1}$ are now naturally defined by setting
$$
(M_i)^{n+1}_j = \frac{1}{\Delta x} \int_{x_{j-1/2}}^{x_{j+1/2}} \int_{-\infty}^{+\infty} v^i {f}^{n+1-}(x,v) dv dx.
$$
Then, we have ${\bf M}^{n+1}_j = \big((M_0)^{n+1}_j,(M_1)^{n+1}_j,(M_2)^{n+1}_j,(M_3)^{n+1}_j\big)^t$ for all 
$j \in \mathbb{Z}$, which completes the algorithm description. \\

\begin{rem} 
It is easy to see that this scheme preserves the moment space $\Omega$, see for instance \cite{dfv08}. \\
\end{rem}

\begin{rem}
Under the natural CFL condition ${\Delta t}\, \max_{j \in \mathbb{Z}}((v_1)^n_j,(v_2)^n_j) \leq CFL \, \Delta x$,
with $CFL \leq 1$, integrating (\ref{eq:te}) over $(t,x,v) \in 
(0,\Delta t) \times \mathcal{C}_j \times \mathbb{R}$ and against $v^i$, $i=0,...,3$ easily leads to the 
equivalent update formula 
$$
{\bf M}^{n+1}_j = {\bf M}^{n}_j - \frac{\Delta t}{\Delta x} \big( {\bf F}^{n}_{j+1/2} - {\bf F}^{n}_{j-1/2} \big), \quad 
j \in \mathbb{Z},
$$
where we have set ${\bf F}^{n}_{j+1/2} = \big((M_1)^{n}_{j+1/2},(M_2)^{n}_{j+1/2},(M_3)^{n}_{j+1/2},(M_4)^{n}_{j+1/2}\big)^t$ and
$(M_i)^{n}_{j+1/2} = (M_i)^{n+}_{j+1/2} + (M_i)^{n-}_{j+1/2}$, and
with 
$$
(M_i)^{n-}_{j+1/2} = (\rho_1)^n_{j+1} \min(0,(v_1)^n_{j+1}) \big((v_1)^n_{j+1}\big)^{i-1} + \min(0,(v_2)^n_{j+1}) (\rho_2)^n_{j+1} \big((v_2)^n_{j+1}\big)^{i-1}, 
$$
\vspace{-0.5cm}
$$
(M_i)^{n+}_{j+1/2} = (\rho_1)^n_j \max(0,(v_1)^n_{j+1}) \big((v_1)^n_j\big)^{i-1} + (\rho_2)^n_j \max(0,(v_2)^n_{j+1}) \big((v_2)^n_j\big)^{i-1}.
$$
\end{rem}

\subsection{Numerical quadrature strategy at the frontier of the moment space}

This paragraph addresses the issue of how to numerically handle the transition between a vector in the interior of the moment space, 
and a vector lying at its frontier. For an isolated point at the frontier of the moment space $\Gamma$, there is no specific problem since we use a single quadrature node and the quadrature is not a problem. The two problems we have to face are related to the preservation of the cone in which we envision to work in section 2 as well as to deal with finite precision algebra in the neighborhood of the point $(0,0)$ in the $(e,q)$ plane.

Consequently we introduce two constants in the numerical quadrature we use. First, for finite precision algebra and in order to avoid numerical errors, we only evaluate the two quadrature nodes when $e/M_0^2 > \epsilon_1$, where $\epsilon_1$ is  a small number related to machine precision.
Under this threshold, the velocity dispersion is considered null ($e=0$), and the set of Dirac delta functions are reconstructed 
as suggested in section $2$:
$\rho_1 = \rho_2 = M_0/2$,  $v_1 = v_2 = M_1/M_0$.


\begin{figure}[h]
  \begin{center}
         \includegraphics[width=0.35\textwidth]{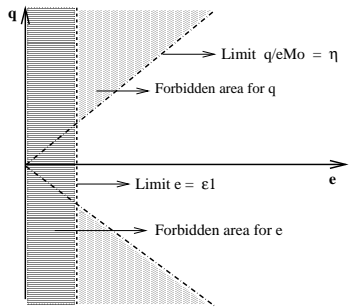}
	\caption{Handling of the moment space border with the admissible cone.  }
		\label{fig:conditions} 
  \end{center}
\end{figure}


Secondly, since we want to deal with compactly supported velocity distribution at the kinetic level, we will introduce another constant, $\eta$ which is a bound for the distance between the two abscissas. In fact we would like to impose to the solution to remain inside the cone in the $(e,q)$ plane : $\frac{\vert q \vert}{M_0e}\le \eta$. It will be shown in the following examples that the cone is in fact automatically preserved by the proposed algorithm and that such a bound does not have to be imposed but is satisfied by the numerical solution.




If $\frac{\vert q \vert}{M_0e}>\eta$, then we set $q/(M_0e) =sign(q)\,\eta$, so that~:  
\begin{equation*}
\frac{q}{M_0e} =sign(q)\,\eta, \quad 
q = sign(q)\,\eta\, M_0\, e, \qquad  M_3 = q + M_1M_2/M_0. 
\end{equation*}
As illustrated in Fig \ref{fig:conditions} and explained in Section\, \ref{border}, the variable $q$ naturally lives in a cone delimited by the straight lines of slope $\pm \eta$ and becomes null when $e\le\epsilon_1$.

Let us remark that limiting $\frac{q}{M_0e}$ does not lead to a limitation on the quantity $\frac{q}{e^{3/2}}$ in such a way as to allow the possibility to reach large ratio in terms of weights (one weight can approach a zero value while the quantity $\frac{q}{e^{3/2}}$ grows indefinitely) but without allowing the abscissas distance to grow beyond of fixed limit naturally inherited from the initial solutions at the kinetic level. We will come back to this point in the following result section.
%
For the simulations we present, the parameters are such that: $\epsilon_1 = 10^{-9}$ and $\eta=2$. 



\subsection{Numerical results}

This section is devoted to  numerical illustrations of the two Riemann problems and one dedicated to the case of a free boundary connecting 
zone where $e=0$ and a zone where $e>0$, discussed in Section~$6$.


In all the figures, we choose to represent $\rho_1$ and $v_1$ by solid lines, and $\rho_2$ and $v_2$ by lines 
with circle markers. In the representation 
of weights and abscissas, values have to be assigned for $v_1$ and $v_2$ : we thus decide that $v_1$ is the 
maximum of the relative values of velocity.

\subsubsection*{Two packet collision}

Figure \ref{fig:two_clouds_ini} represents the initial conditions for the two particle packet case.
Figures \ref{fig:two_clouds_1} and \ref{fig:two_clouds_2} presents the numerical and analytical solutions for the two particle packet case 
with $\rho_L=\rho_R=1$, and $v_L=1$, $v_R=-1$. The computation is run with a $1000$ cell grid on the spatial domain 
$[0,1]$, with $CFL=1$. 
The length of each packet is $0.4$ ($\rho_1=\rho_2=v_1=v_2=0$ for $x \leq 0.1$ and $x \geq 0.9$) and the two packets 
start to collide exactly at time $t=0$. Moving in opposite direction one across the other, with the same opposite speed, they then overlay
and we note in particular that $\rho_1=\rho_2=1$ and $v_1=v_2=0$ in the mixing zone (see for instance the plots 
at time $t=0.1$). As expected, they finally get separated again and we note that a perfect agreement is obtained 
with the exact entropic solution. \\

\begin{figure}[htbp]
  \begin{center}
\includegraphics[width=0.3\textwidth]{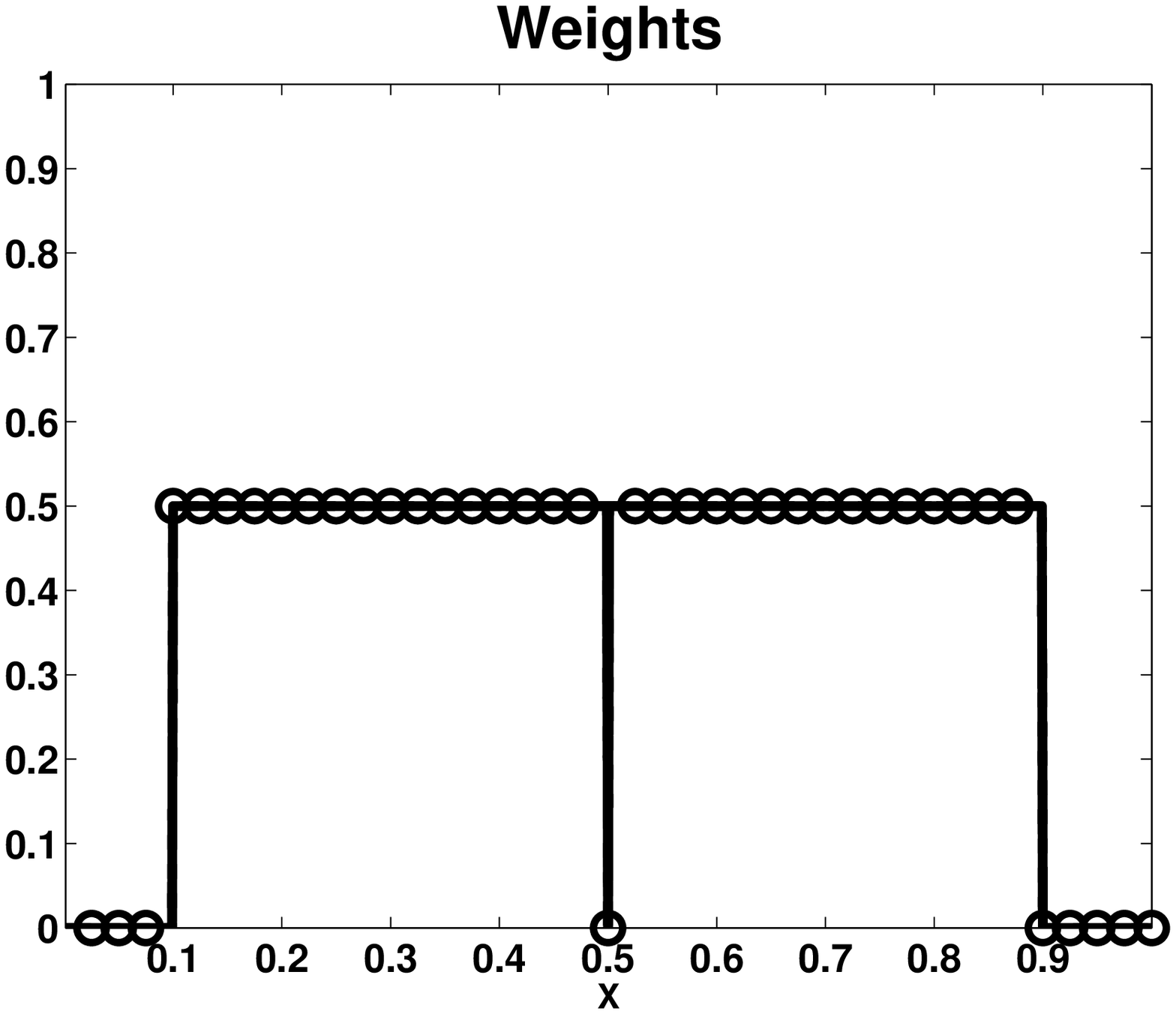}
\includegraphics[width=0.3\textwidth]{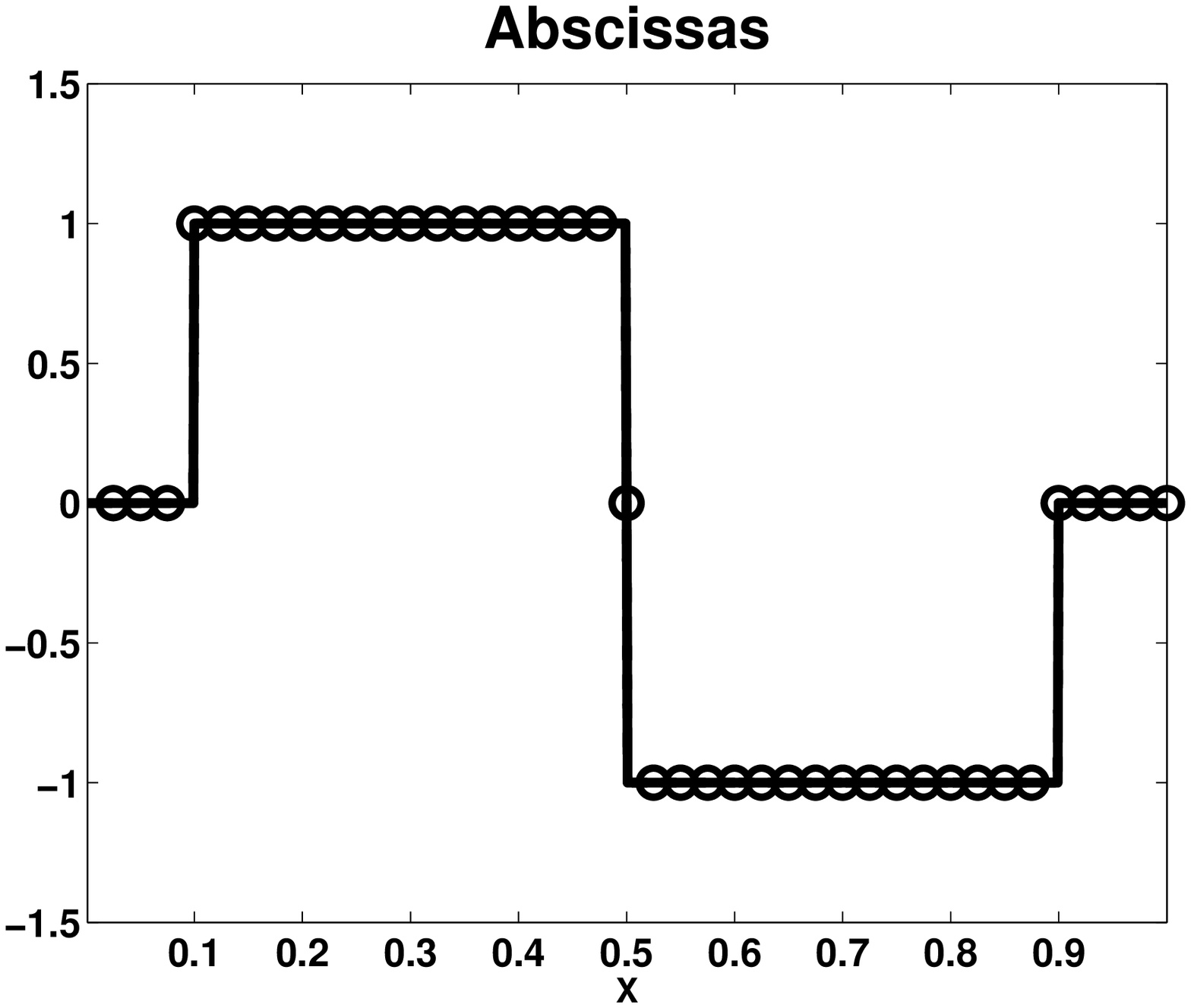}   
			\label{fig:two_clouds_ini} 
			\caption{Initial fields of weights (left) and abscissas (right) for the two particle packet case. The first 
			quadrature node is represented by solid lines whereas the second node is represented by circles.}
  \end{center}
\end{figure}

\begin{figure}[htbp]
  \begin{center}
	  \includegraphics[width=0.33\textwidth]{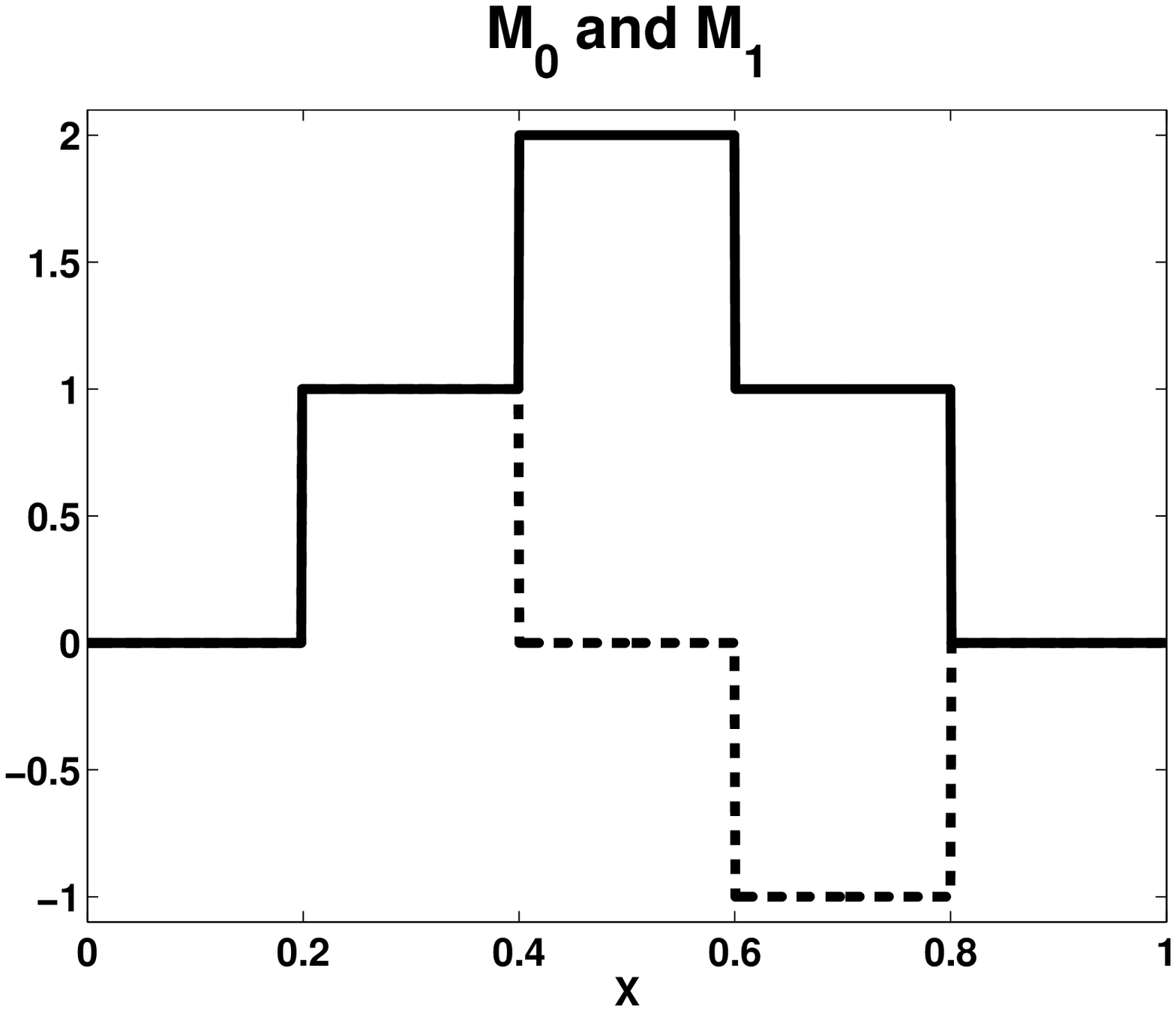}
	\includegraphics[width=0.33\textwidth]{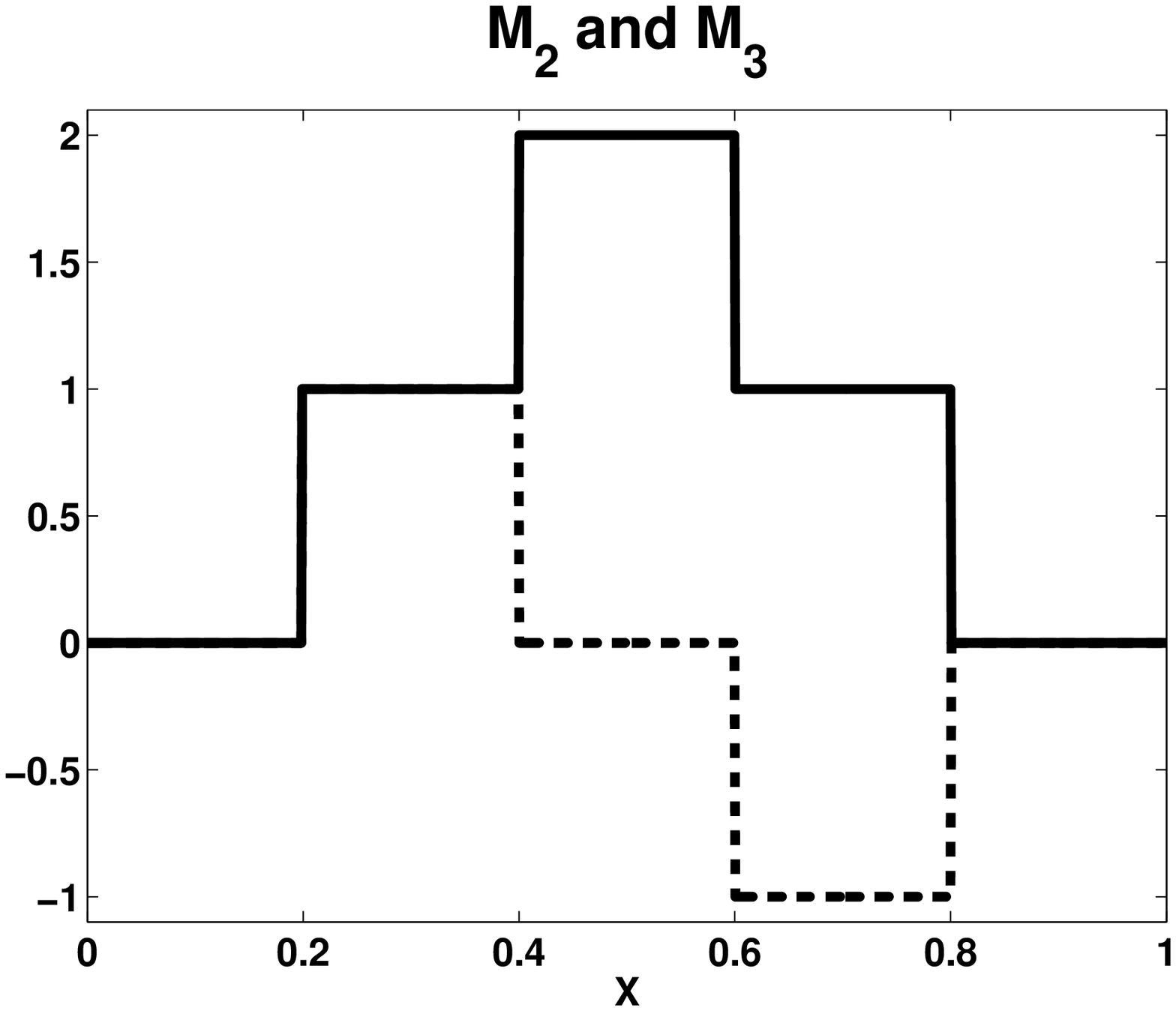}   	
	\includegraphics[width=0.33\textwidth]{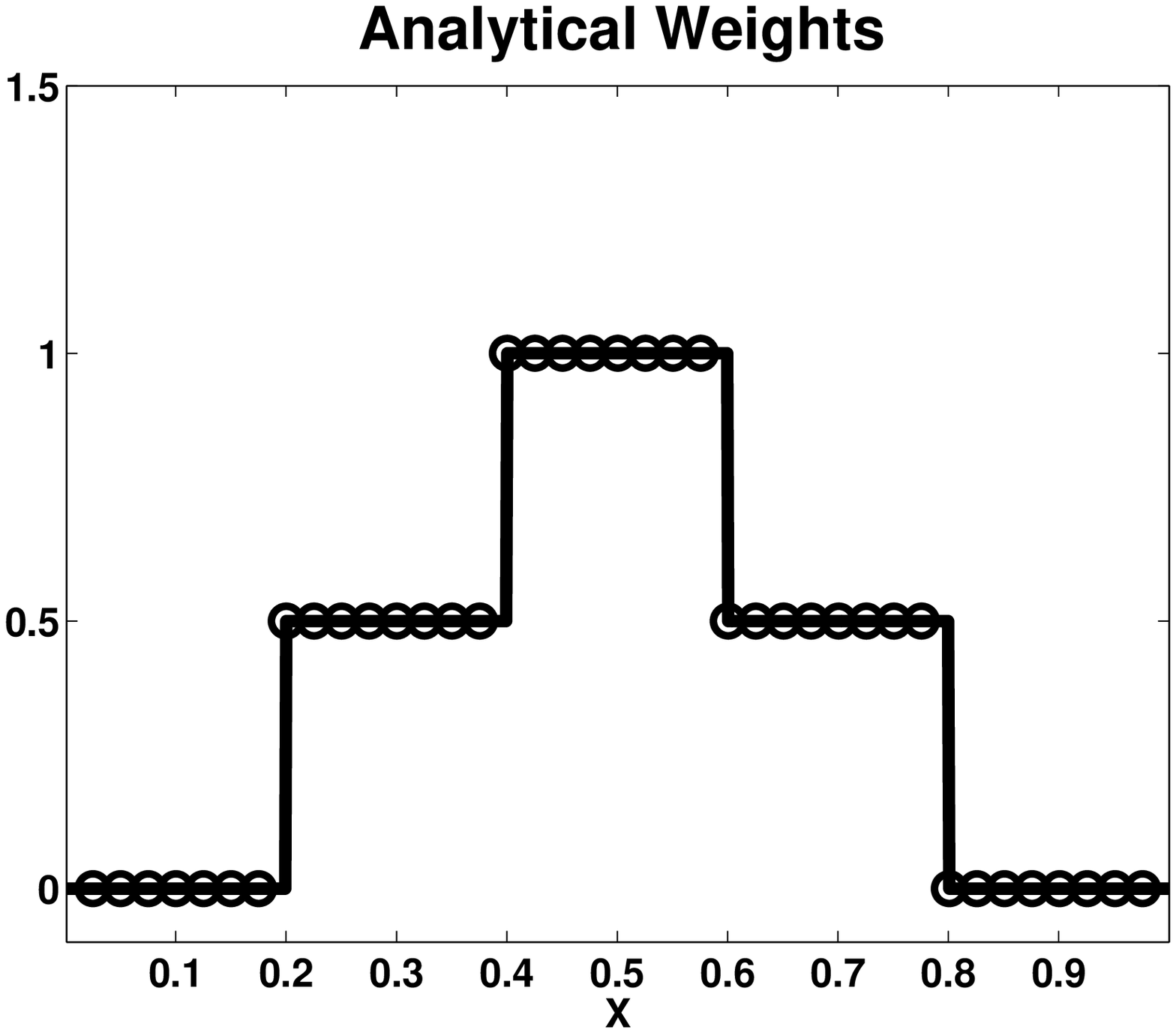}
	\includegraphics[width=0.33\textwidth]{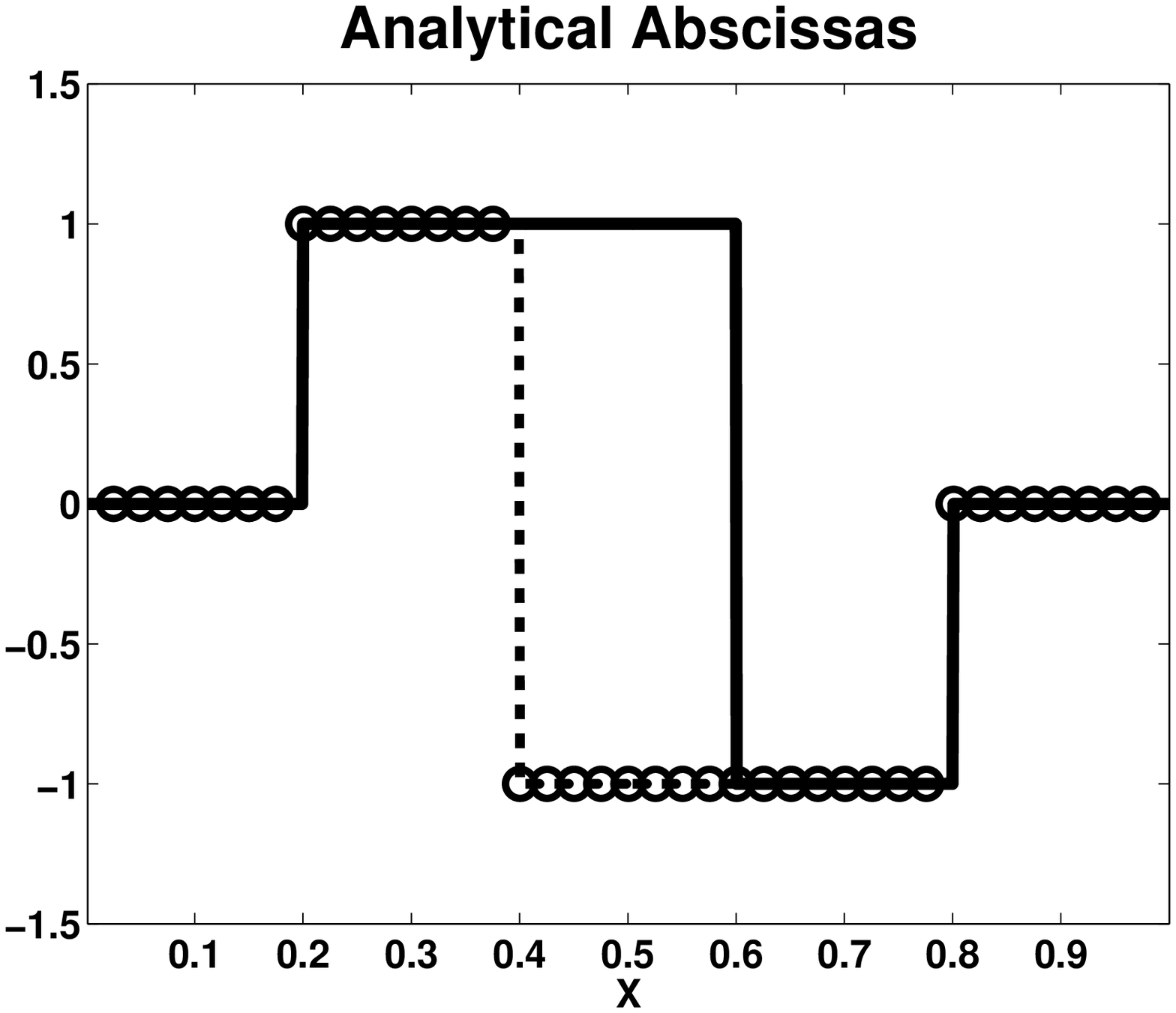}
	\includegraphics[width=0.33\textwidth]{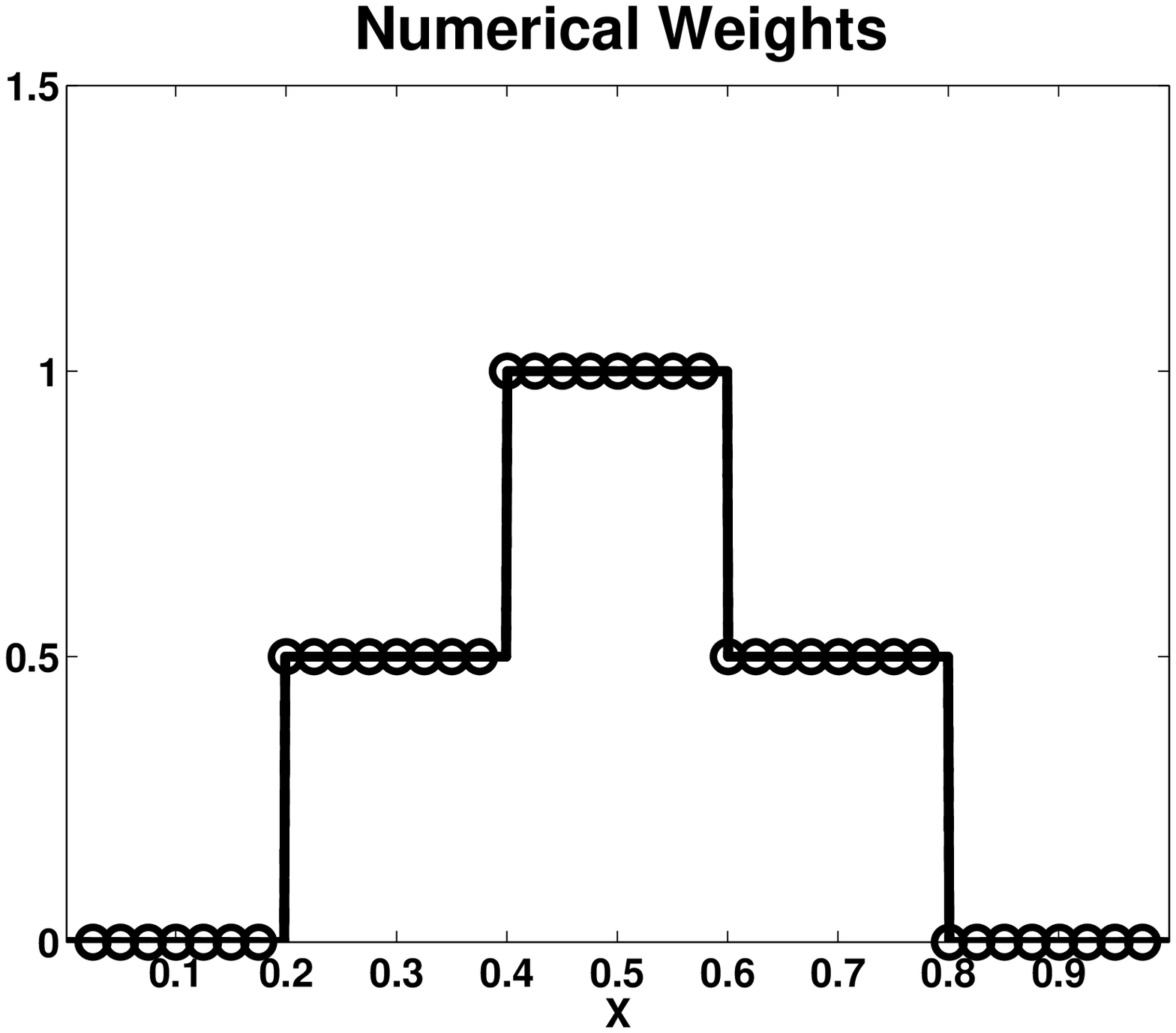}
	\includegraphics[width=0.33\textwidth]{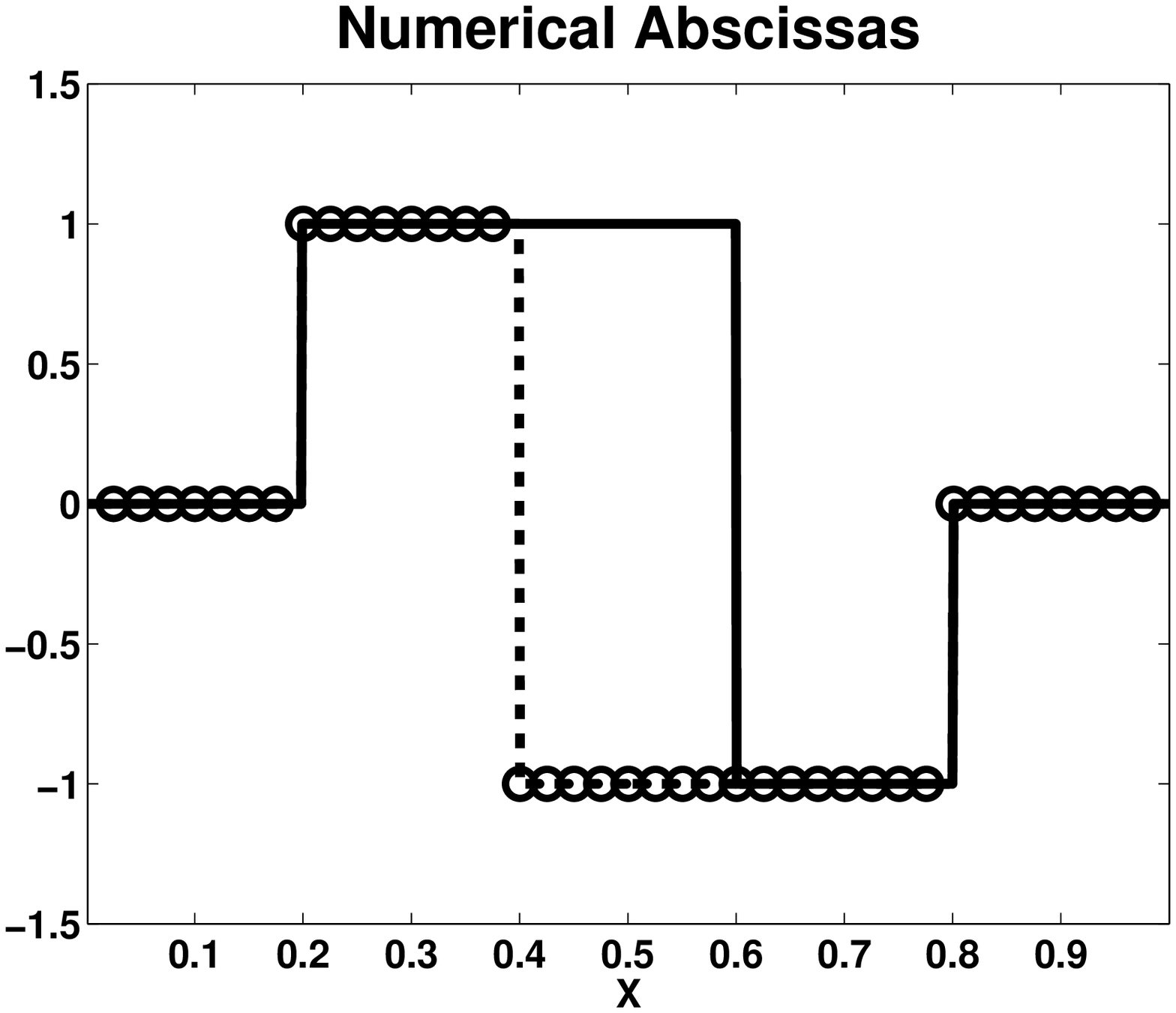}
	
	\caption{ Results for the two packet case at time $t=0.1$. Top: Numerical results for $M_0$ (solid line) and $M_1$ (dashed line) (left) 
and for  $M_2$ (solid line) and $M_3$ (dashed line) (right). 
Middle: Analytical result for weights (left) and abscissas (right).
Bottom: Numerical results for weights (left) and abscissas (right).
 The solid line corresponds to the higher abscissa, the dashed line with circle to 
the lower one.}
		\label{fig:two_clouds_1} 
  \end{center}
\end{figure}

\begin{figure}[htbp]
  \begin{center}
	\includegraphics[width=0.3\textwidth]{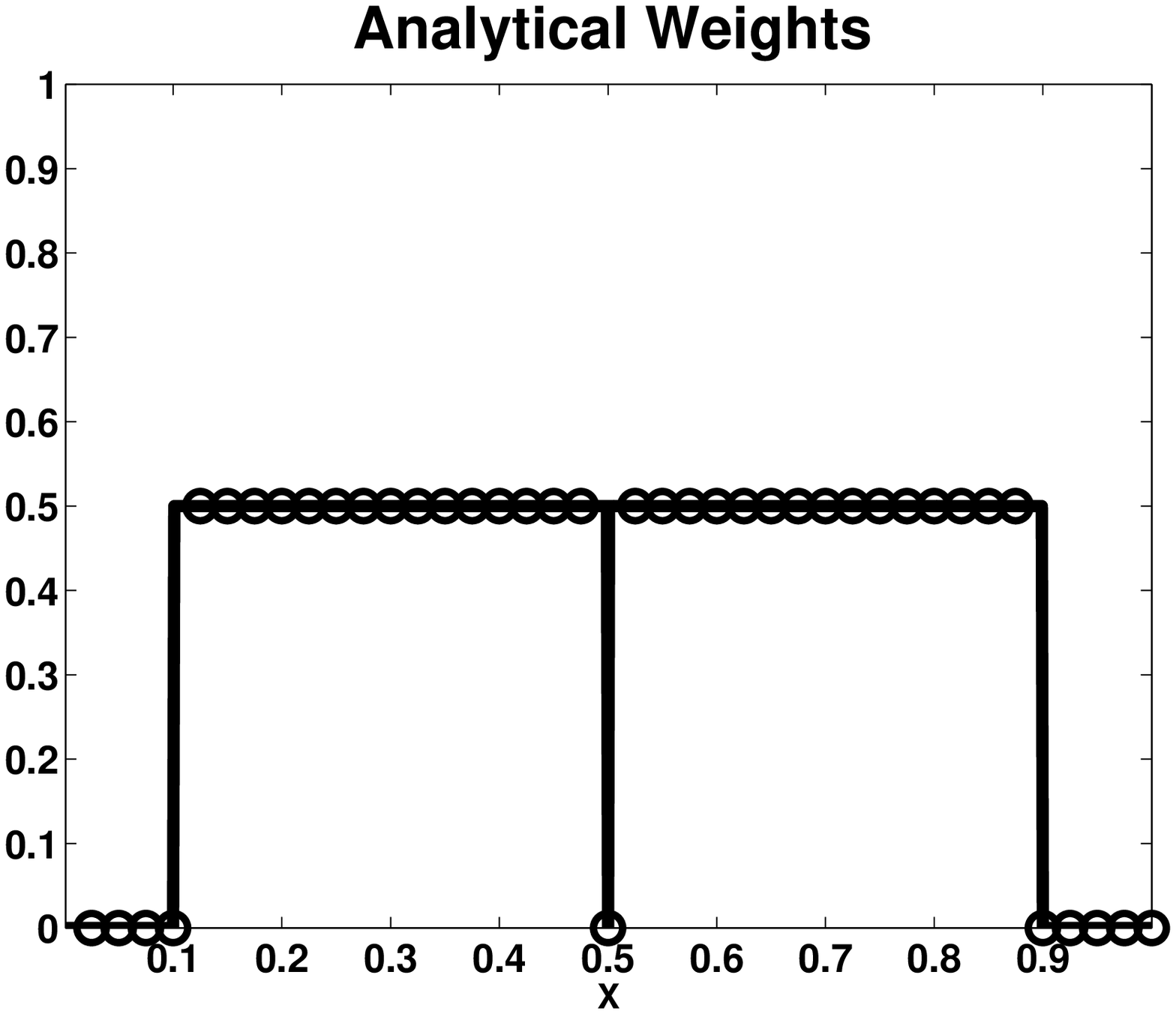}
	\includegraphics[width=0.3\textwidth]{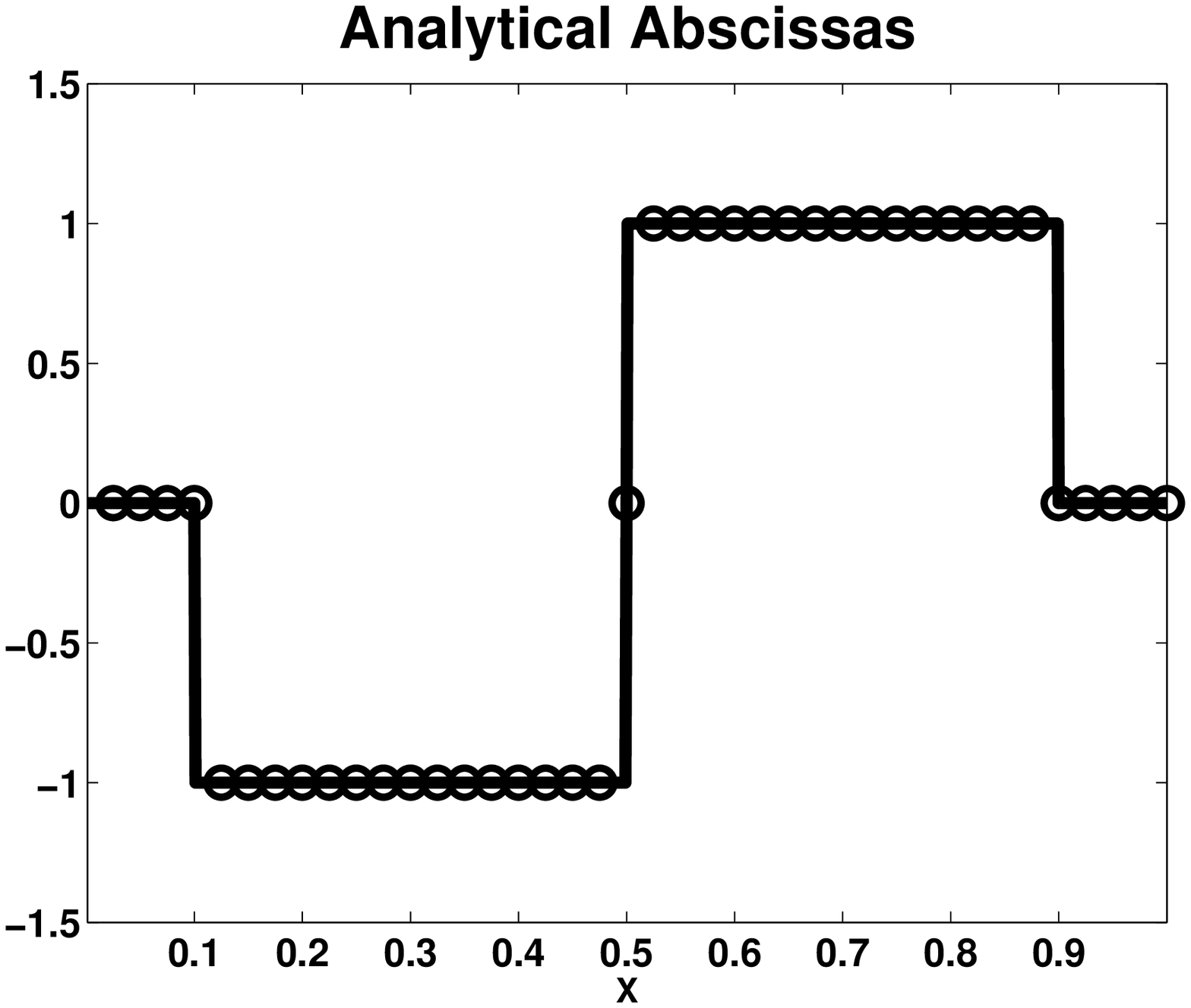}	
	
	\includegraphics[width=0.3\textwidth]{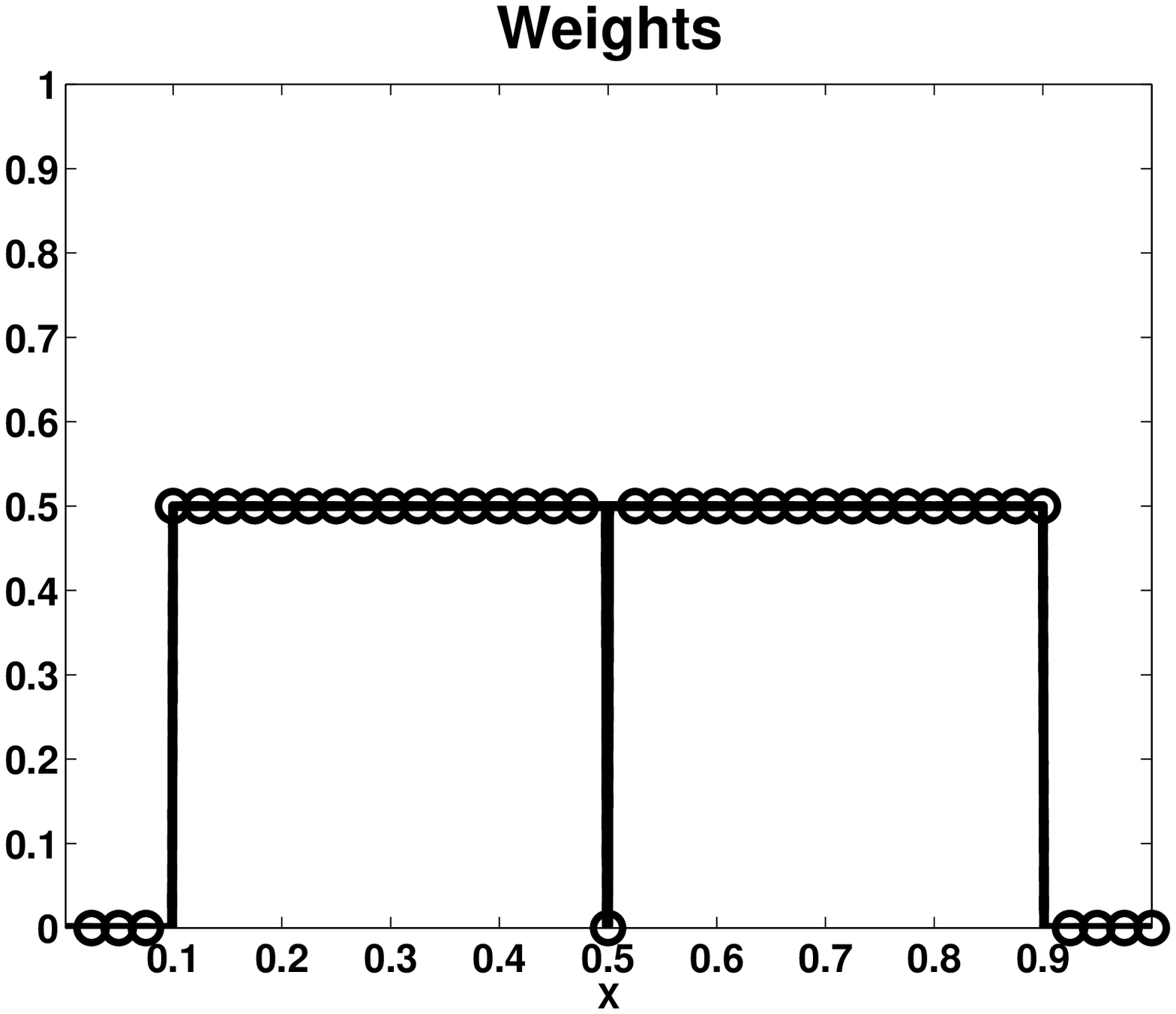}
	\includegraphics[width=0.3\textwidth]{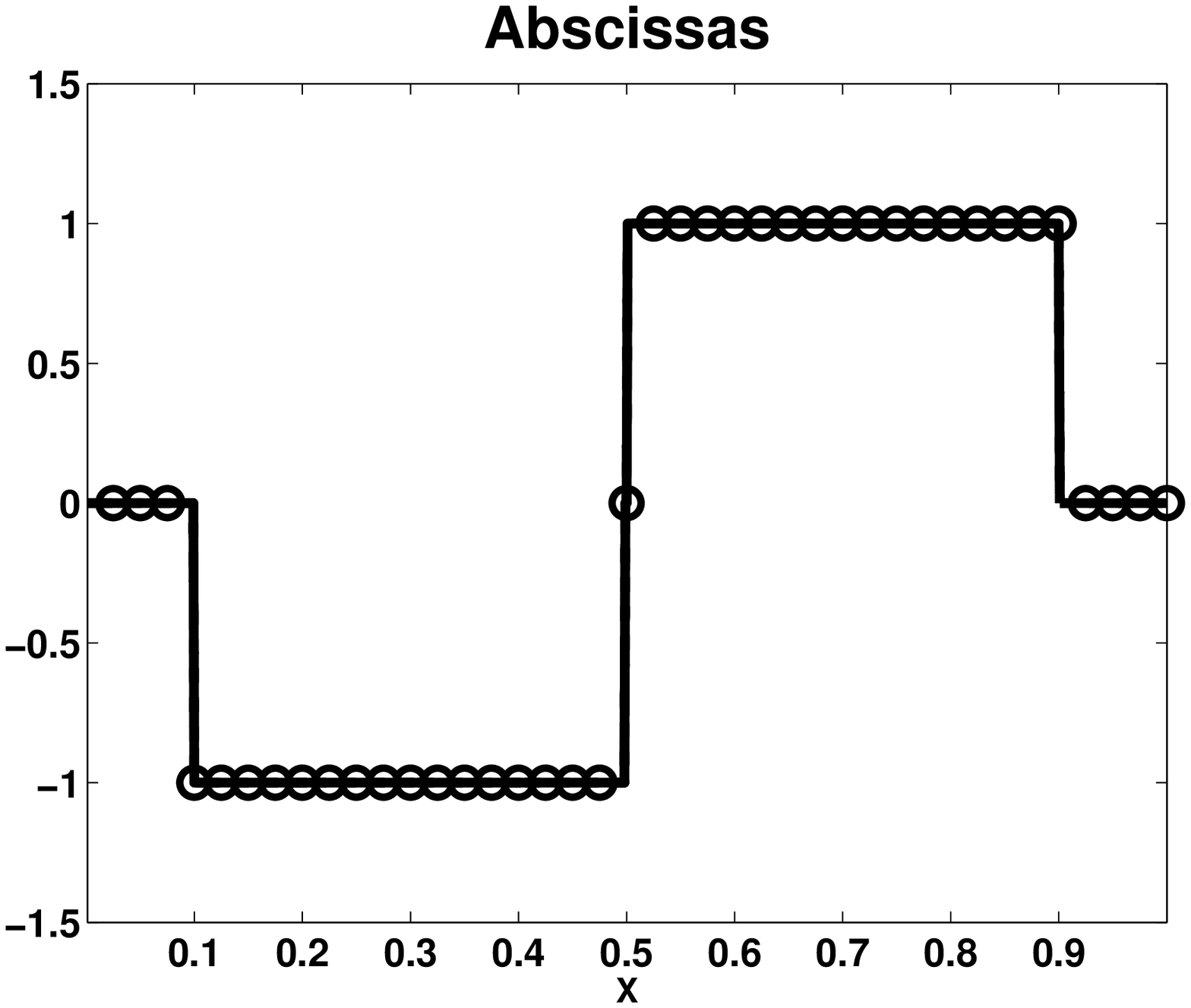}
	
		\caption{ Results for the two packet case at time $t=0.4$. 
Top: Analytical result for weights (left) and abscissas (right).
Bottom: Numerical results for weights (left) and abscissas (right).
 The solid line corresponds to the higher abscissa, the dashed line with circle to 
the lower one.}
		\label{fig:two_clouds_2} 
  \end{center}
\end{figure}

\subsubsection*{Four packet collision}

Figures \ref{fig:four_moments} presents the initial conditions. Figures\, \ref{fig:four_clouds}  and \ref{fig:four_clouds_1bis} present the numerical and analytical solutions 
respectively for the moments and the weights and abscissas.
The computation is run with a $1000$ cell grid on the spatial domain 
$[0,1]$, at $CFL=1$. Here, we observe the presence of two Dirac delta functions as already discussed in 
Section \ref{ees}.  The agreement between exact and numerical solutions for the moments is very good, showing that
the numerical solution converges to the analytical one.
The disparities encountered between the analytical and numerical solution in the case of the weights and abscissas 
are due to the fact that the mapping ${\bf U}({\bf M})$ is discontinuous at the moment space border. In area of 
numerical diffusion ($x\approx 0.22$ and $x\approx 0.88$), when the dispersion $e$ is under the threshold $\epsilon_1$,
weights and abscissas are reconstructed as explained in section $2$.

The wave propagating 
at velocity $1.2$ and separating the constant states ($v_1=0.8, v_2=0$) and ($v_1=1.2, \,v_2=0.8$) is 
steep and coincide with the analytical wave whereas the wave propagating at velocity $0.8$ is actually smooth since the CFL number is based on the highest value 
of velocity, which is $1.2$ in this studied case. The same explanation holds for the symmetric jump at location $x=0.78$.
Meanwhile, because of the conservation of the velocity 
moments, the numerical velocity jump (at $x=0.154$) happens before the analytical velocity jump (at $x=0.18$). 
One can here notice that the quadrature method provides the expected value of velocity in the numerical 
diffusion zones. The same explanation holds for the different velocity jump locations between the analytical 
and numerical solution at $x=0.82$ and $x=0.845$. The same phenomenon is responsible for the disparities 
between the analytical and numerical solutions at the $\delta$-shocks locations, at $x=0.4$ and $x=0.6$. 

Figure\, \ref{fig:four_clouds_2} displays the final profile of the quantities $\frac{q}{M_0e}$ and $\frac{q}{e^{3/2}}$.
Thus, $\frac{q}{M_0e}$
has significative values in areas where the abscissa distance as well as the weight difference are important, whereas $\frac{q}{e^{3/2}}$ reaches high value 
in areas where the weight ratio is important. 
Since in the domain, except for the singularities, the weights have the same value,
 both quantities are equal to zero. At the singularities, $\frac{q}{e^{3/2}}$ is roughly proportional to the square root of the weight ratio,
 and $\frac{q}{M_0e}$ is bounded, accounting for the fact that the velocity field is bounded.

 \begin{figure}[htbp]
  \begin{center}
         \includegraphics[width=0.27\textwidth]{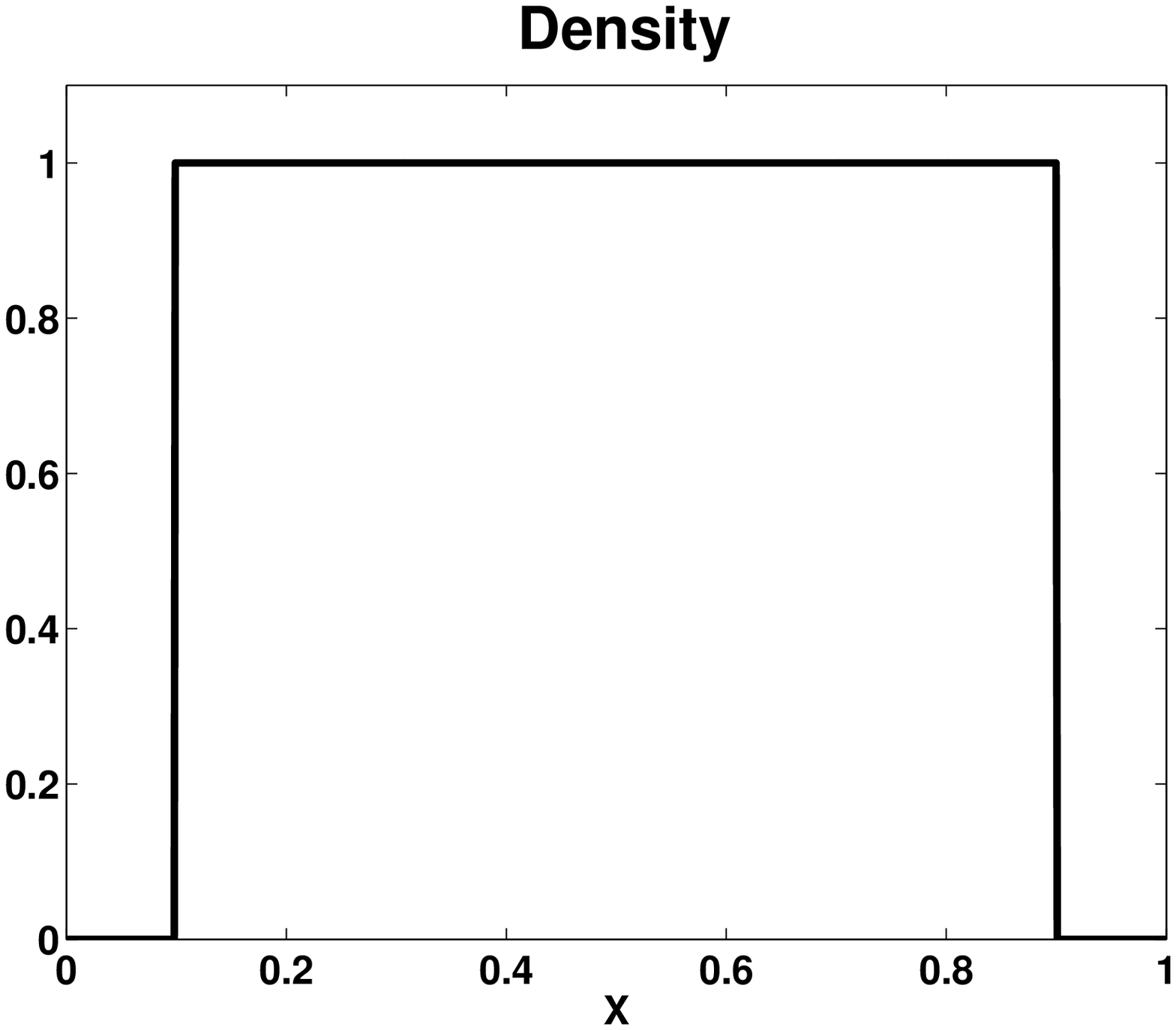}
	\includegraphics[width=0.27\textwidth]{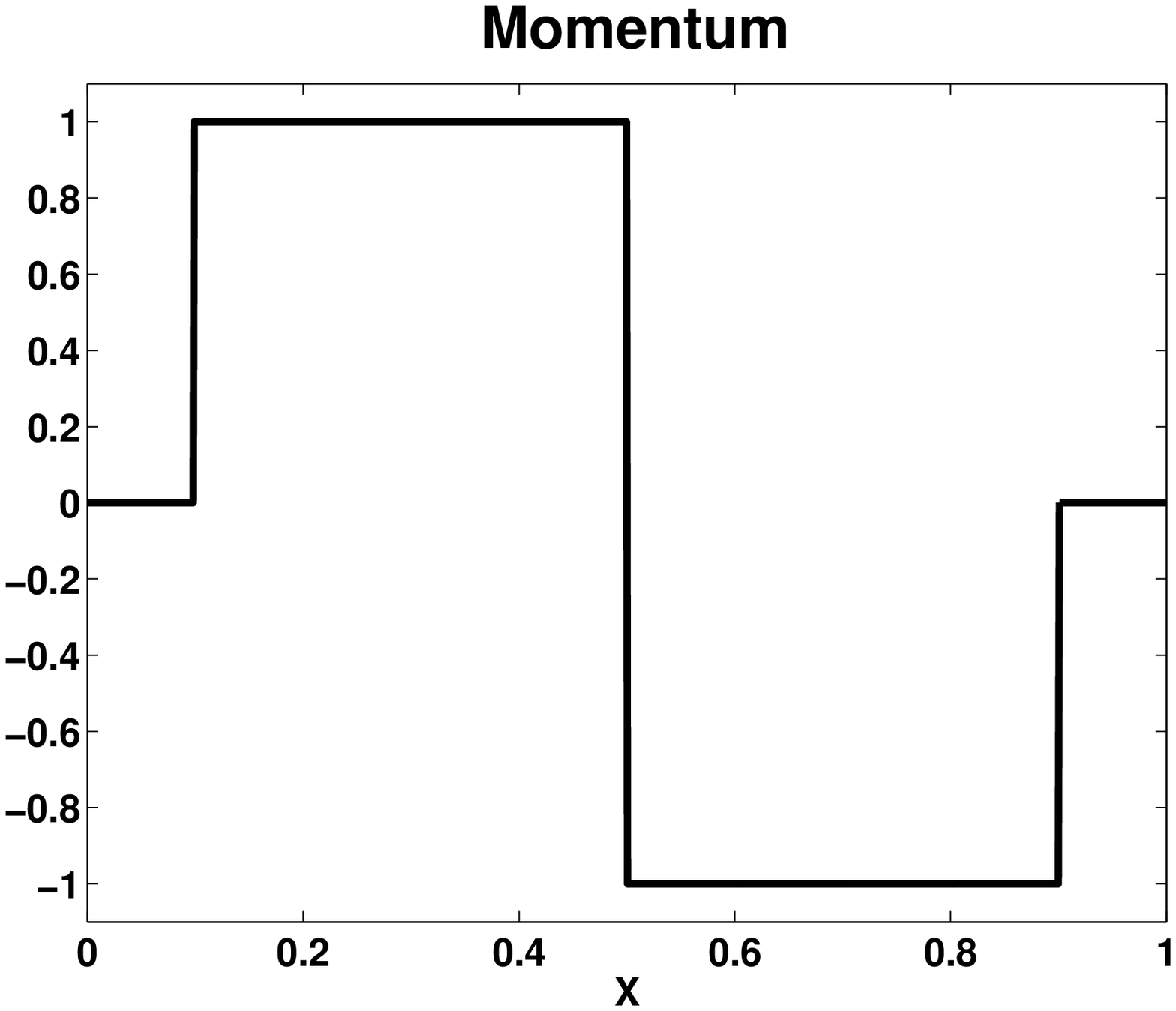}    \\
	
	 \includegraphics[width=0.27\textwidth]{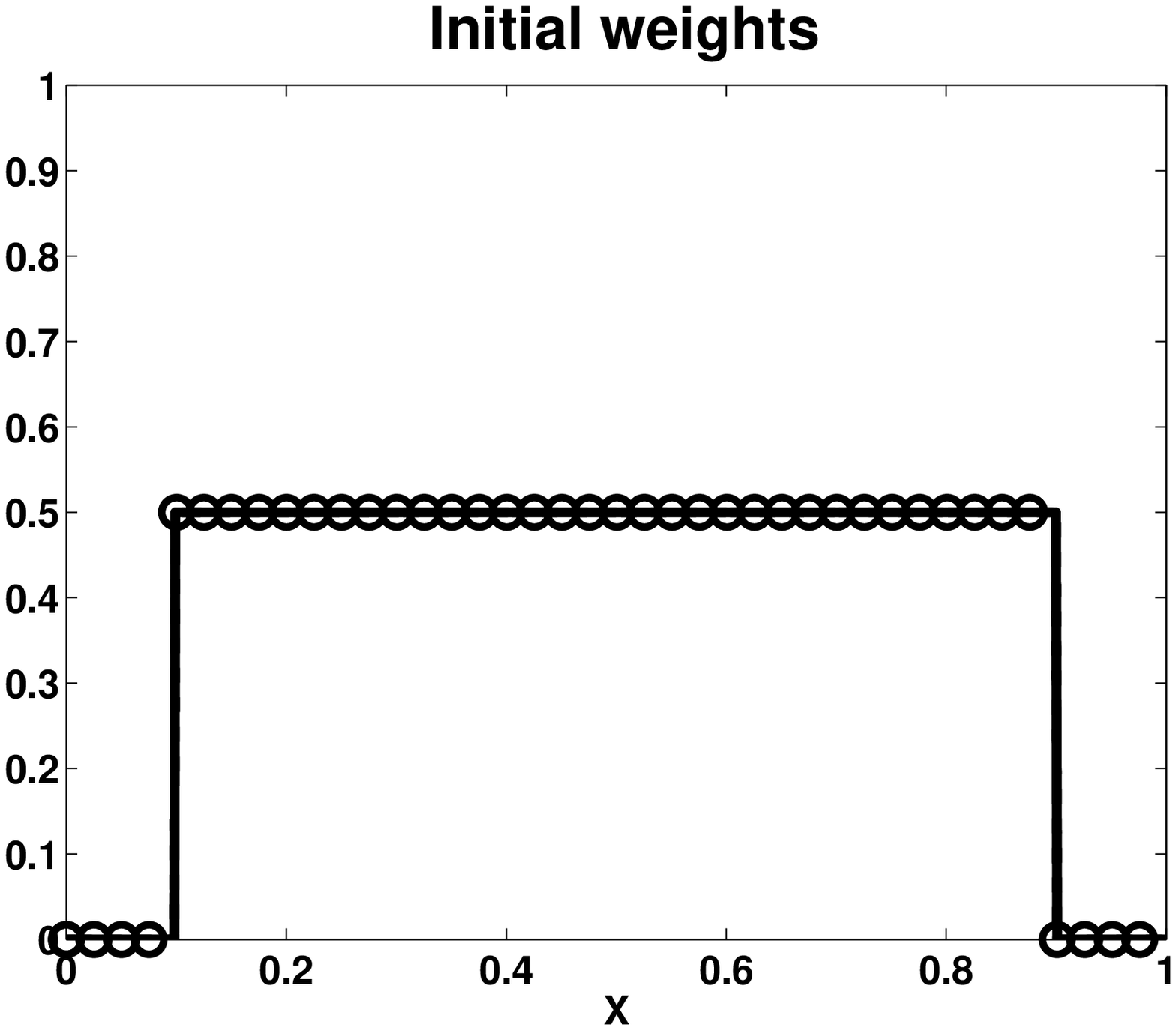}
	\includegraphics[width=0.27\textwidth]{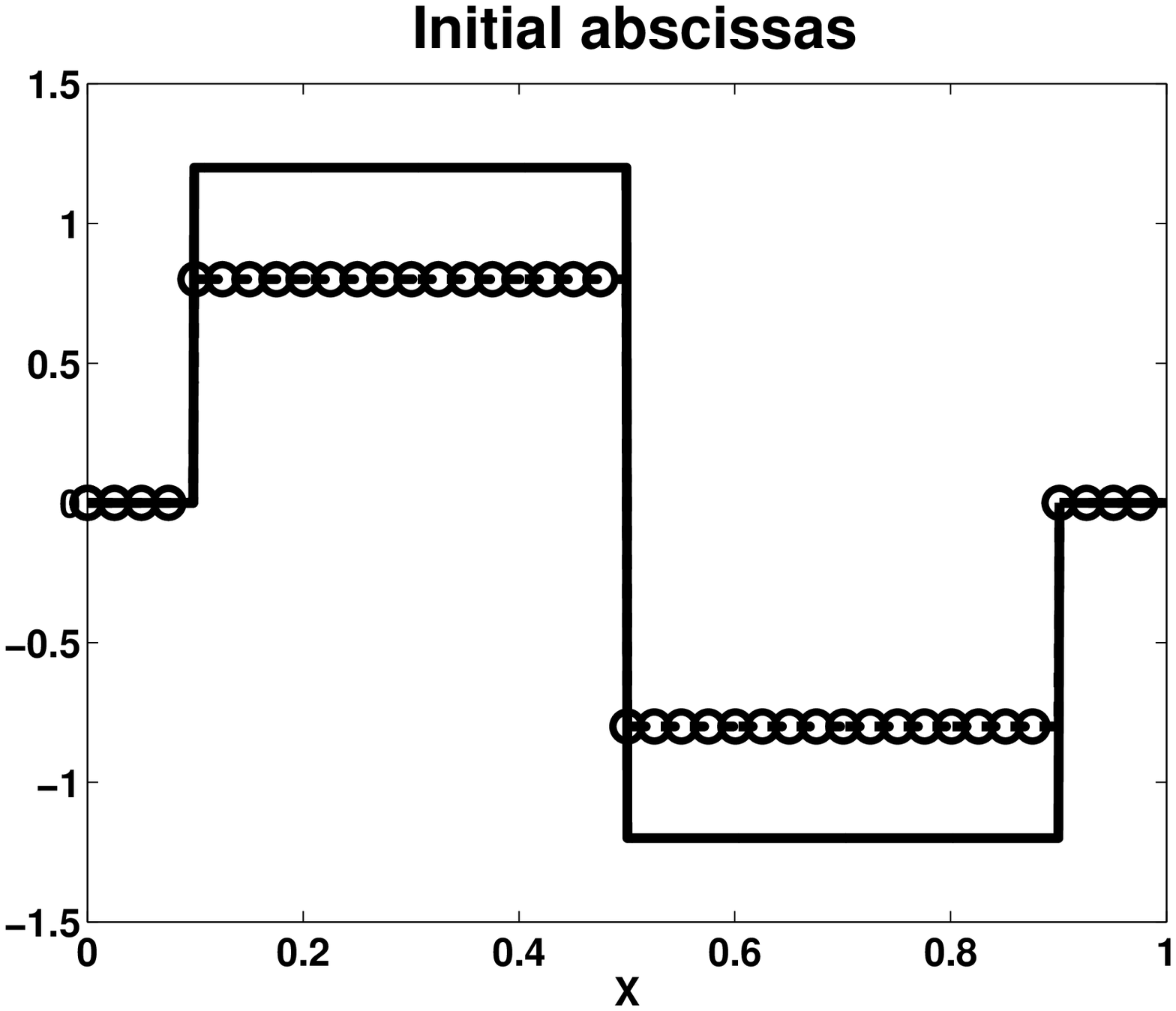}  
	
	\caption{Four packet case with $\rho_L=\rho_R=1$, and $v_1=1.2$, $v_2=0.8$. Initial conditions.
	Top:  $M_0$ (left)  and $M_1$ (right).
	Bottom:  weights (left) and abscissas (right).
	The solid line corresponds to the set ($\rho_1,v_1$), and the dashed 
line with circles to the set ($\rho_2,v_2$). }

		\label{fig:four_moments} 
  \end{center}
\end{figure}

 \begin{figure}[htbp]
  \begin{center}
        \includegraphics[width=0.33\textwidth]{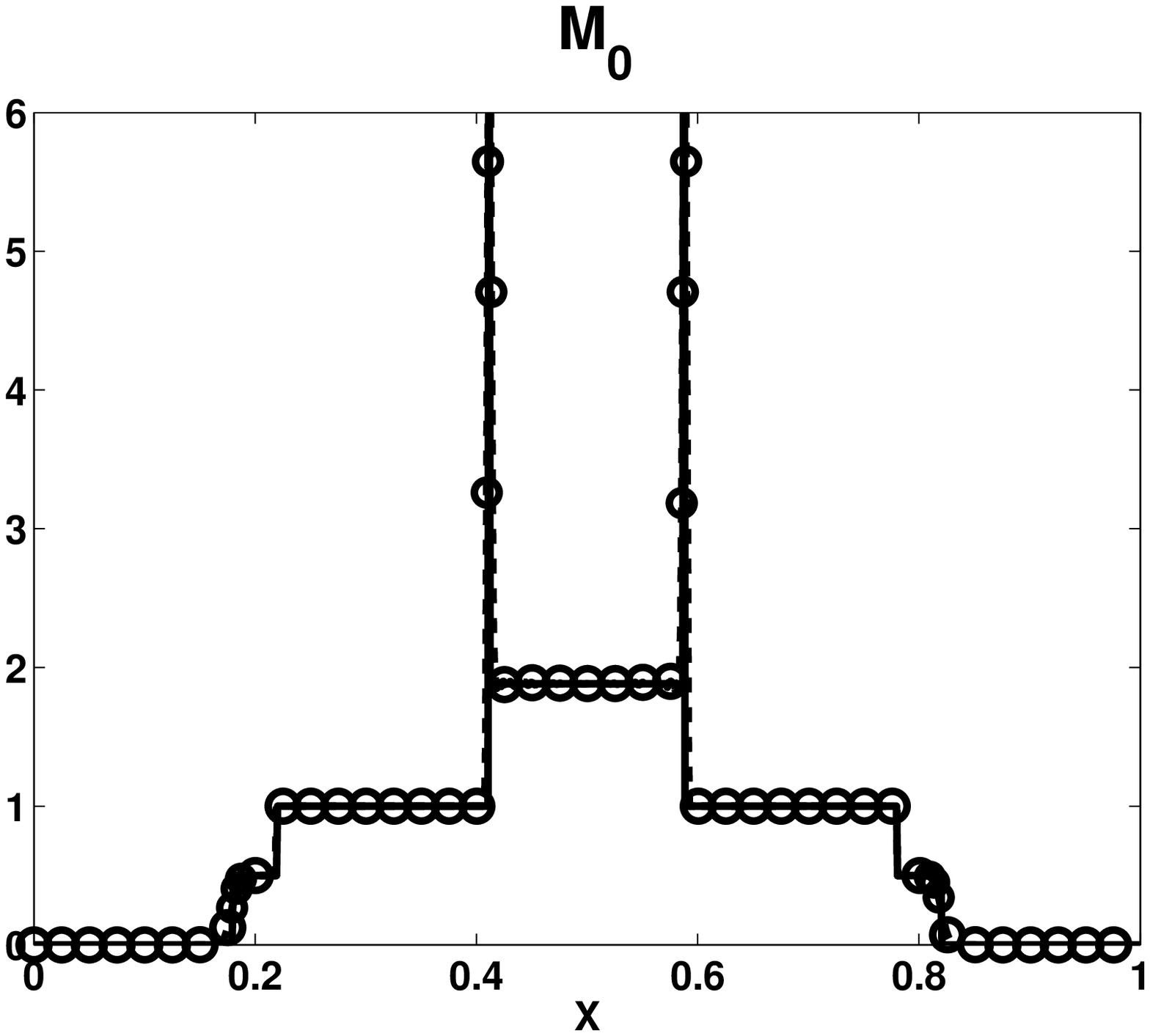}
	\includegraphics[width=0.33\textwidth]{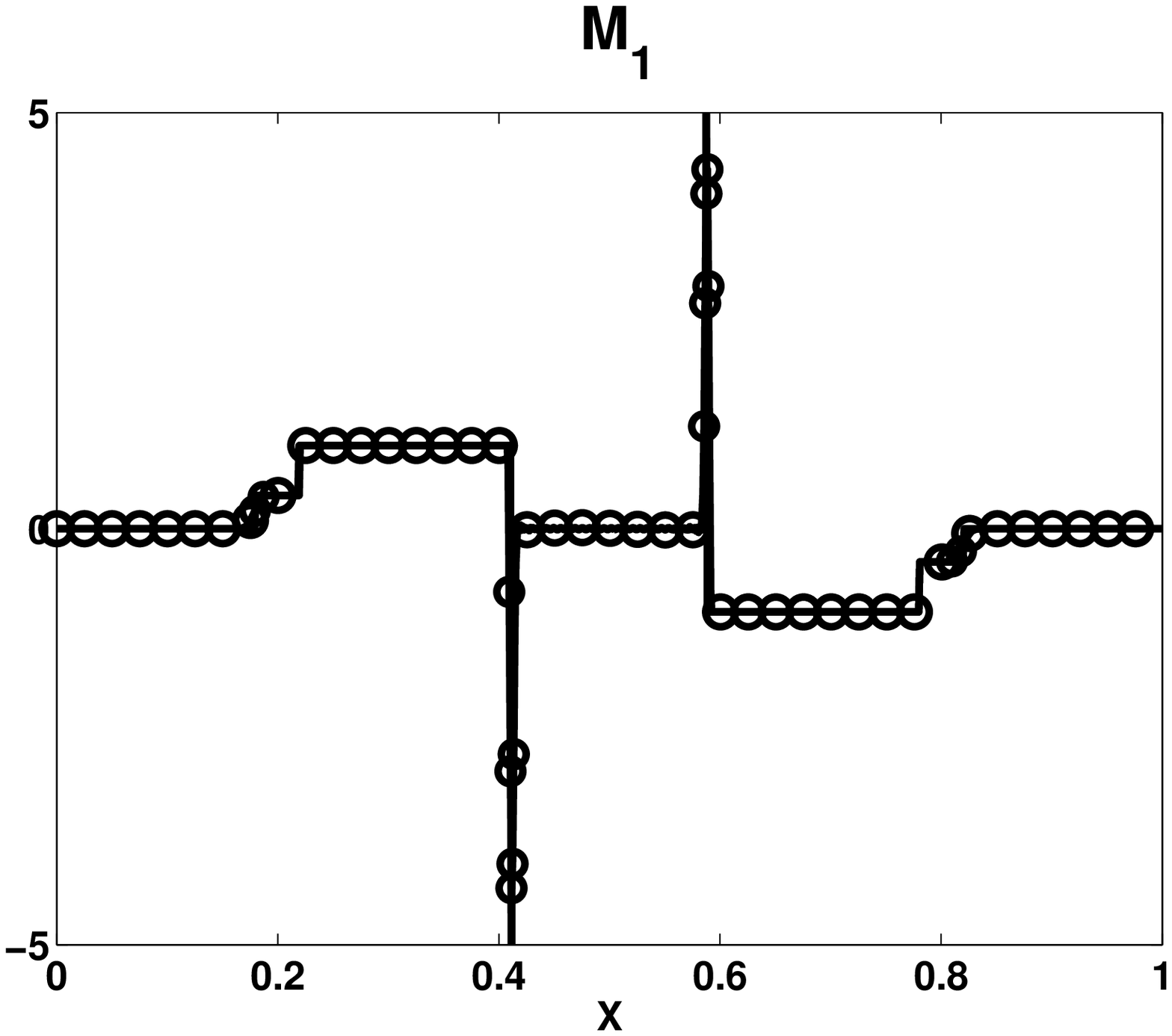}
	\includegraphics[width=0.33\textwidth]{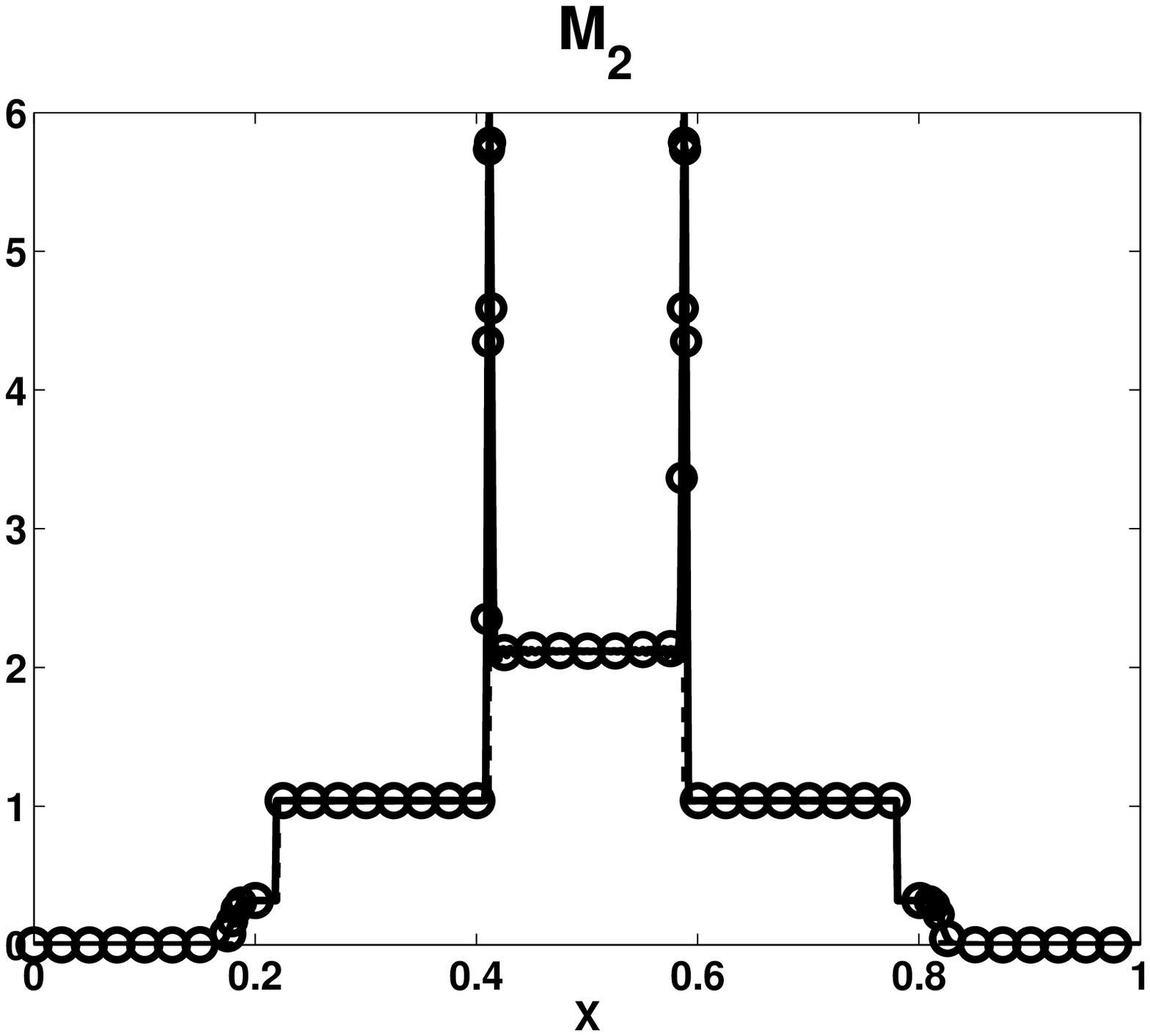}
	\includegraphics[width=0.33\textwidth]{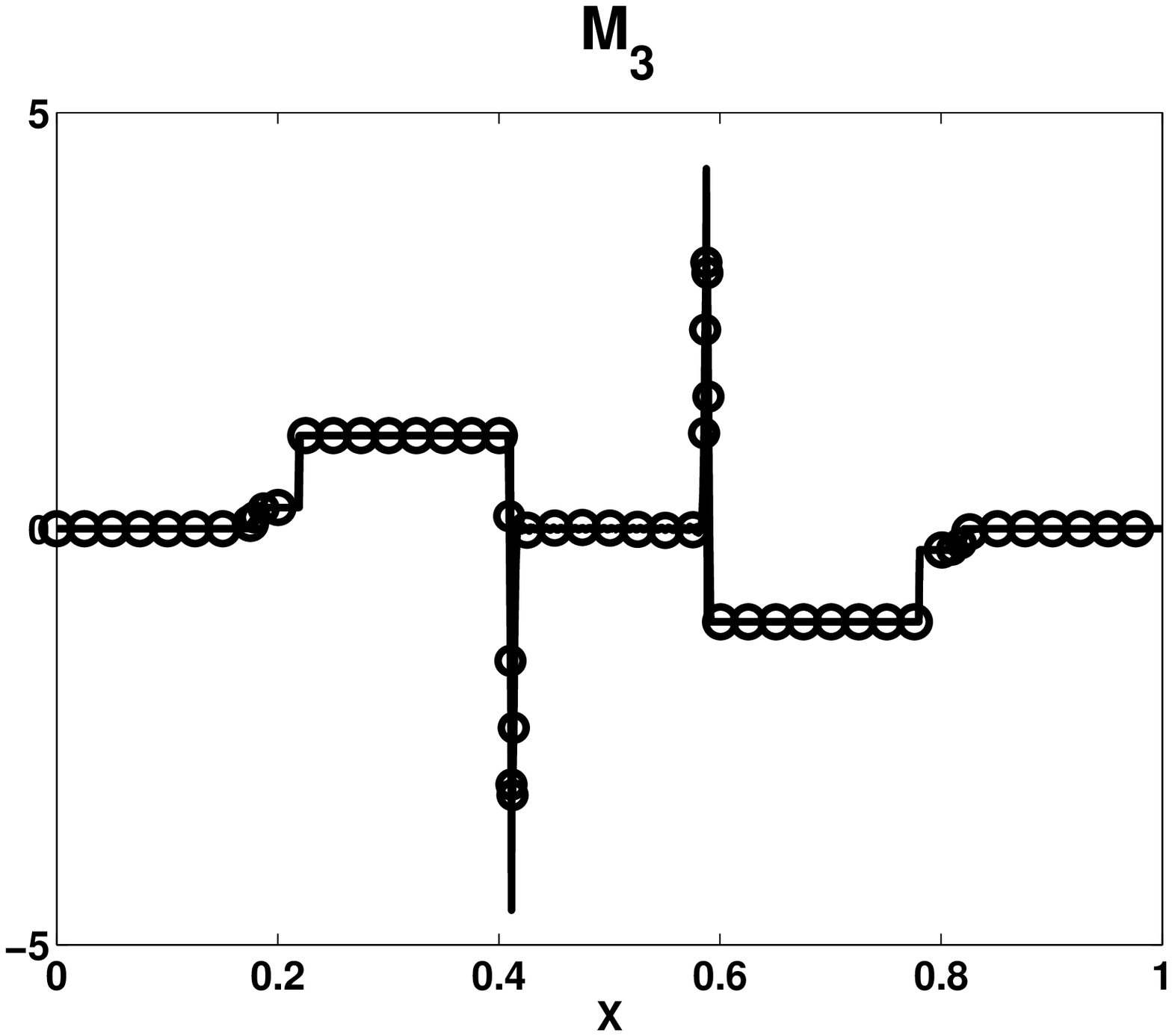}
	\caption{Four packet case. Results at $t=0.1$. Analytical  (solid line) and numerical (dashed line with circles) solutions.
	Top:  $M_0$ (left)  and $M_1$ (right).
	Bottom:  $M_2$ (left) and $M_3$ (right).}
		\label{fig:four_clouds} 
  \end{center}
\end{figure}

 \begin{figure}[htbp]
  \begin{center}
         \includegraphics[width=0.33\textwidth]{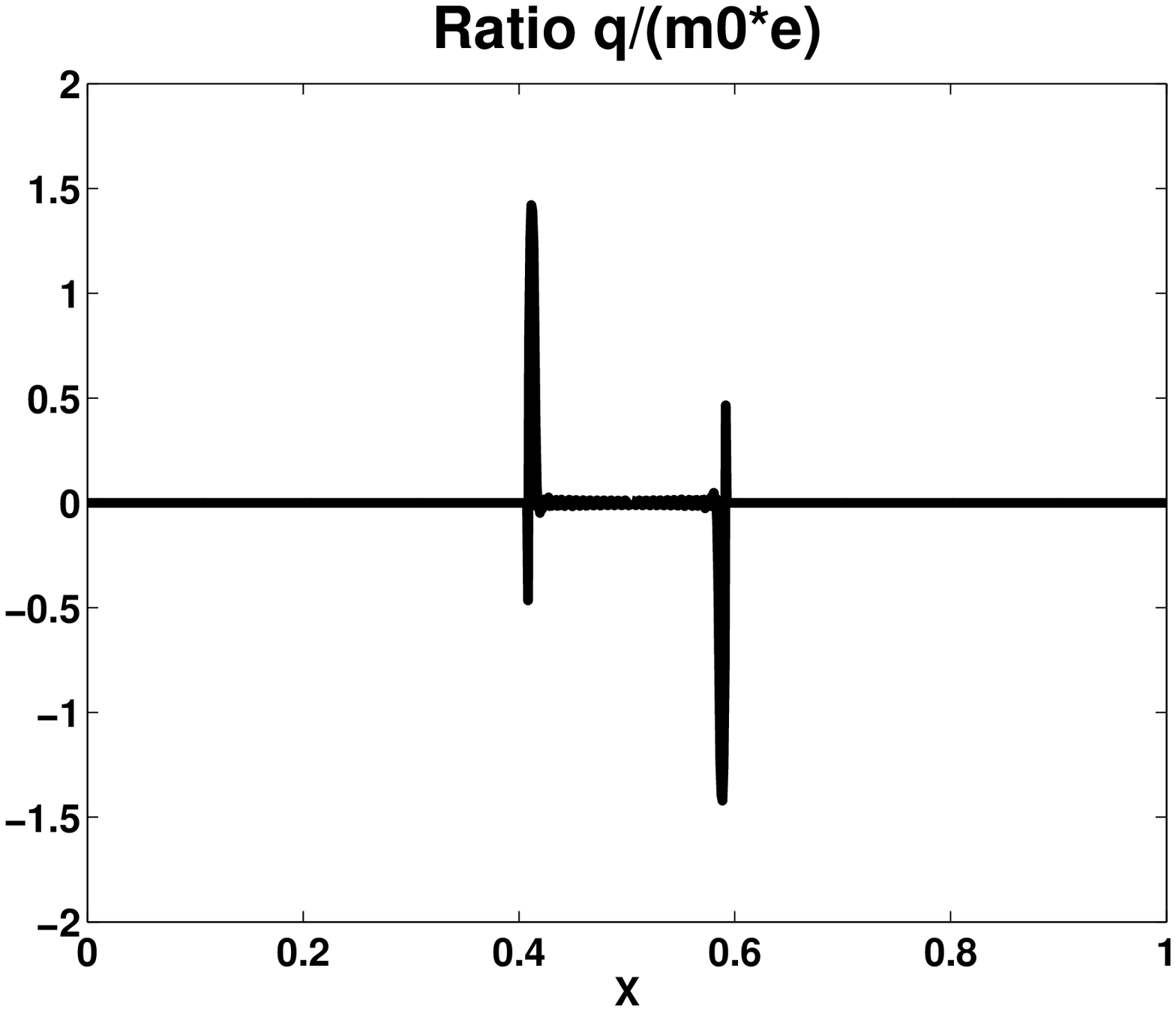}
          \includegraphics[width=0.33\textwidth]{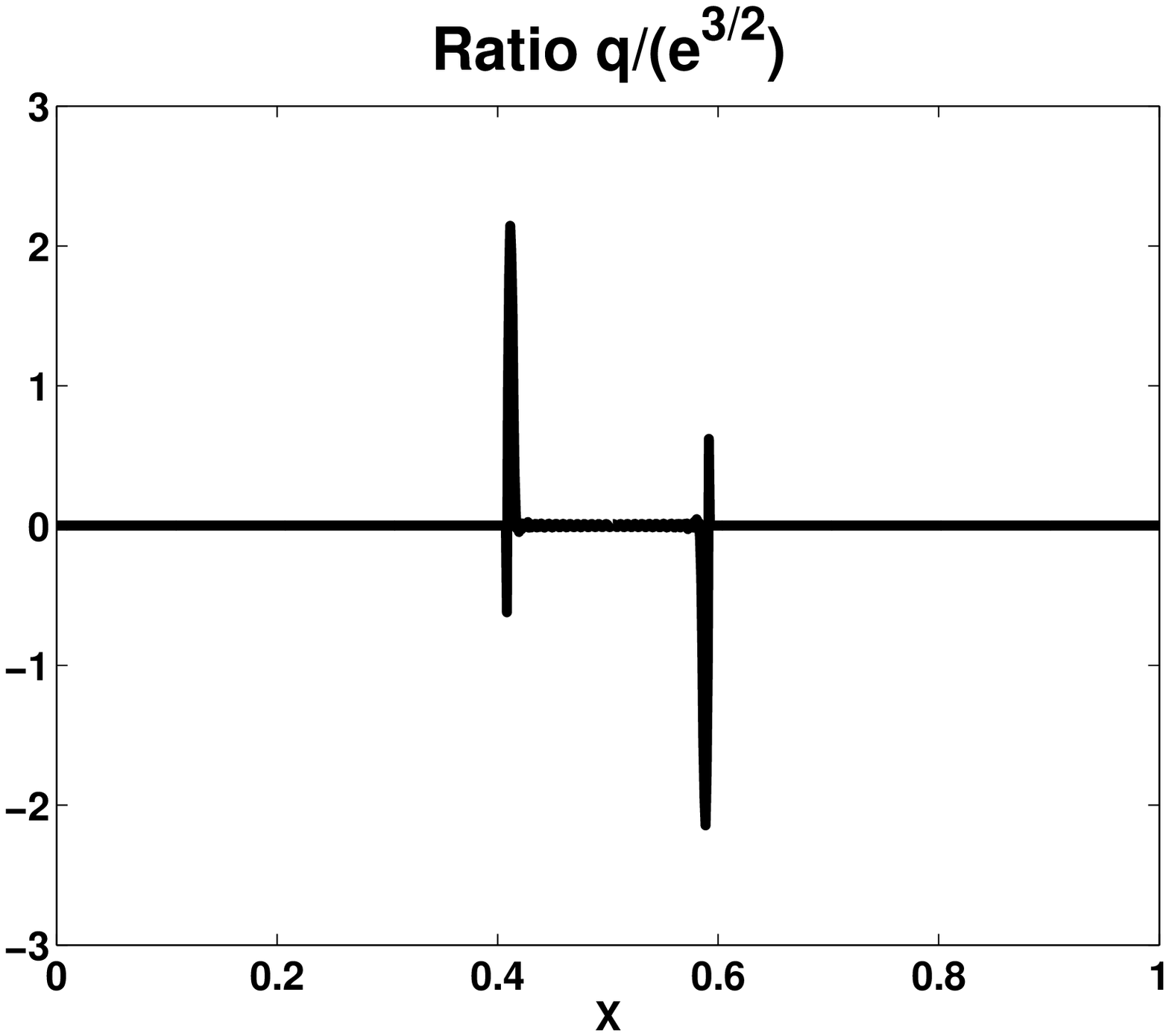}
	\caption{Four particle packet case, at time $t=0.1$.  Left: ratio $q/(M_0e)$. Right: ratio $q/e^{3/2}$. }
		\label{fig:four_clouds_2} 
  \end{center}
\end{figure}

 \begin{figure}[htbp]
  \begin{center}
	\includegraphics[width=0.33\textwidth]{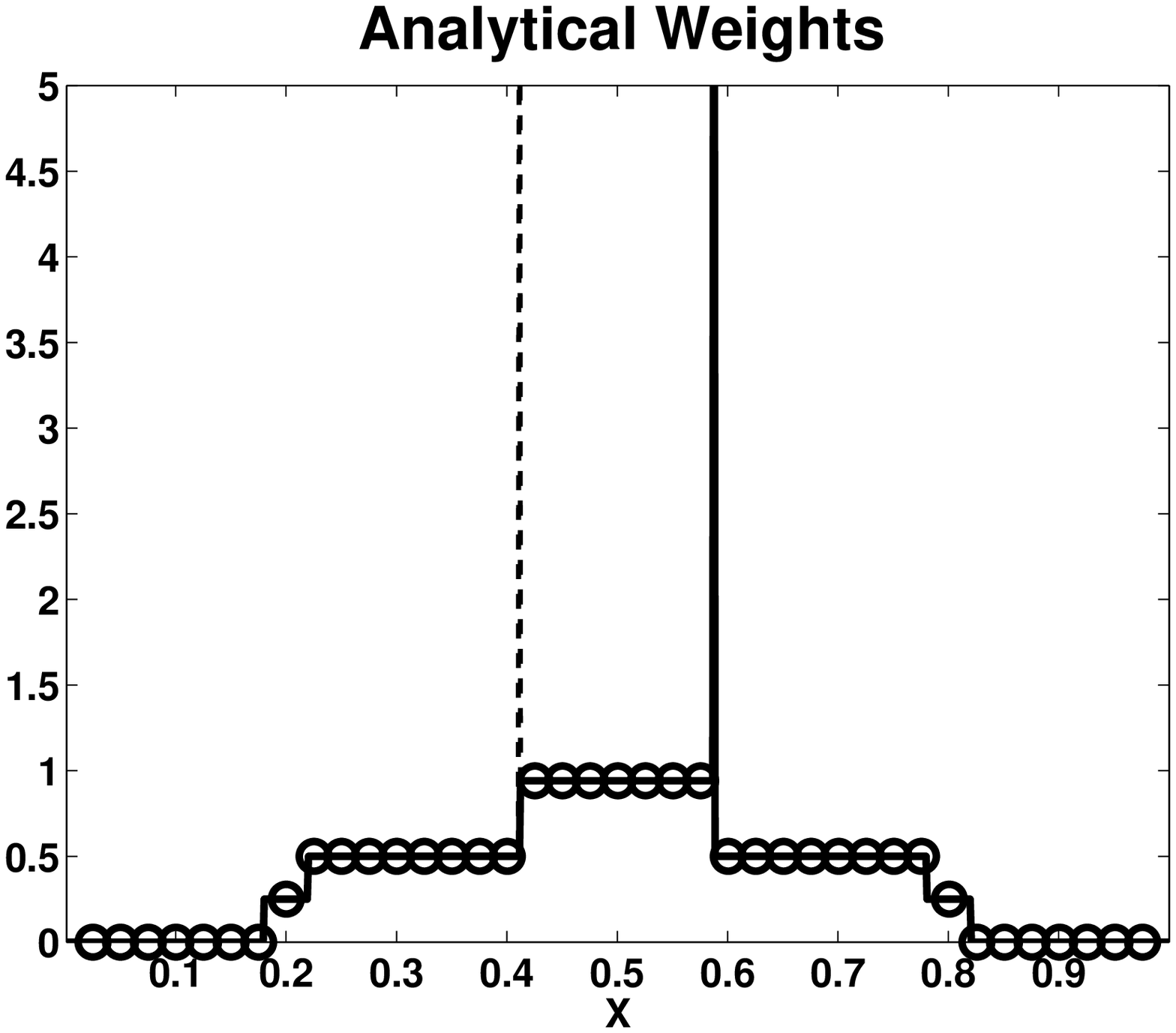}
	\includegraphics[width=0.33\textwidth]{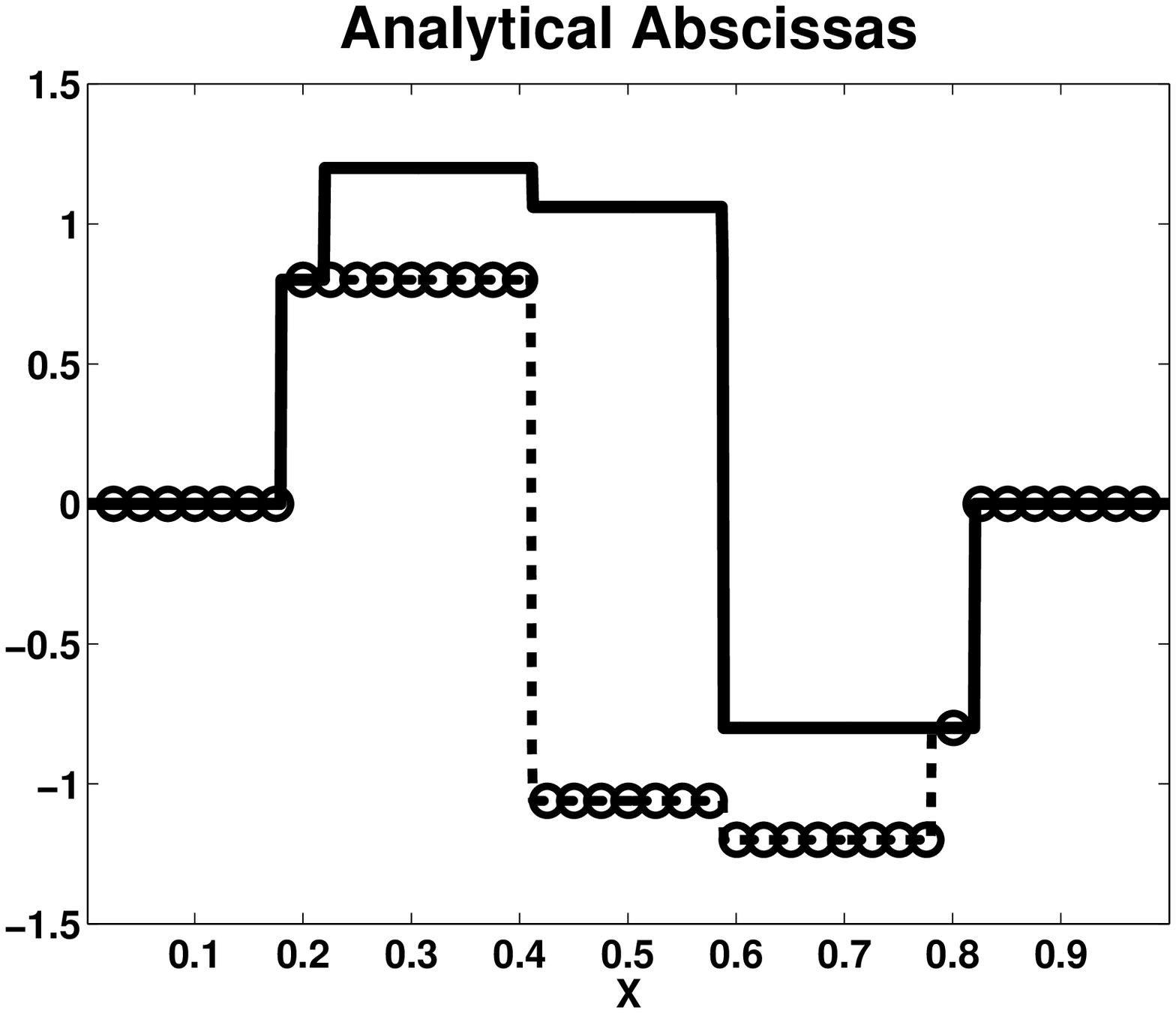}
	\includegraphics[width=0.33\textwidth]{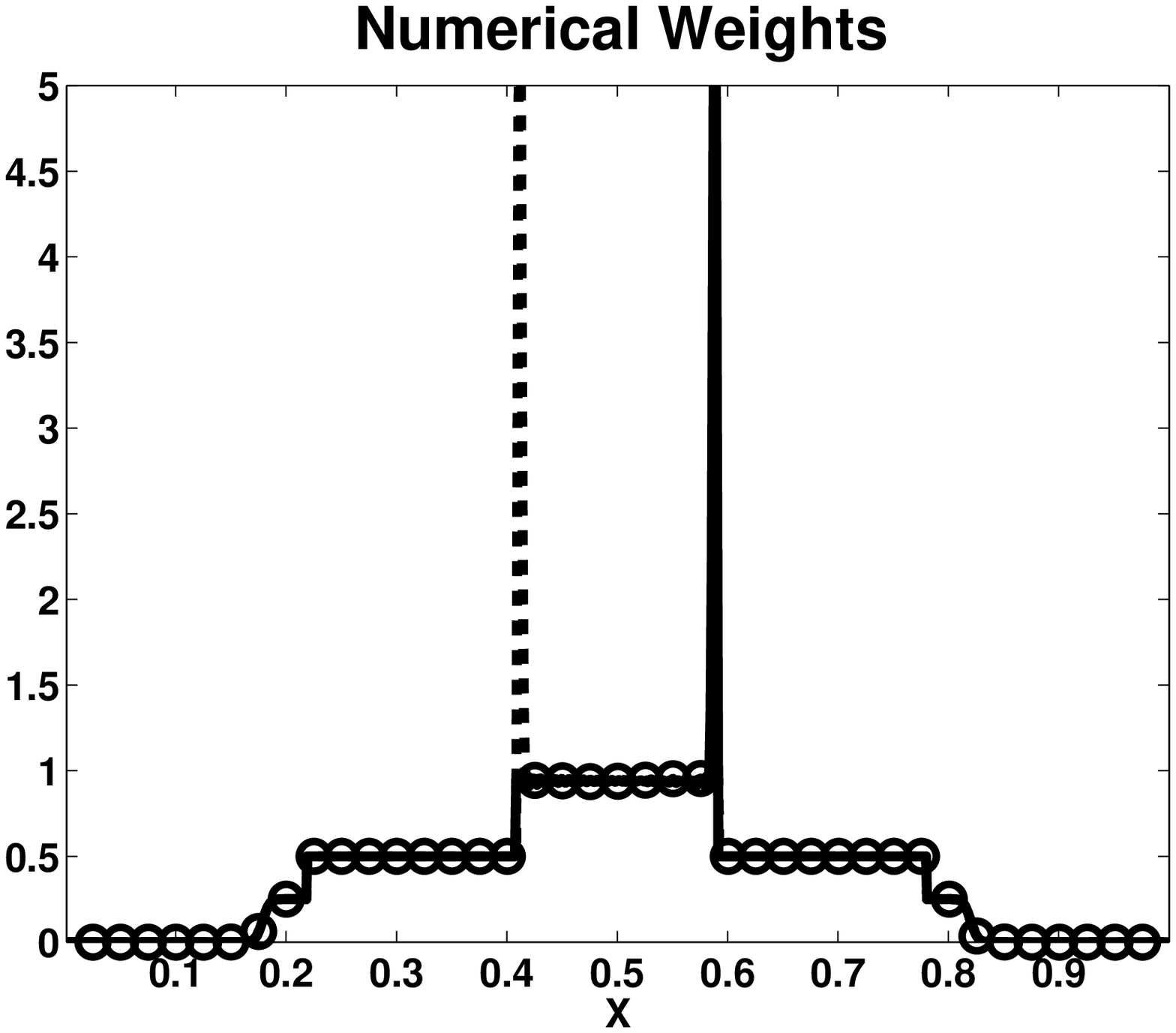}
	\includegraphics[width=0.33\textwidth]{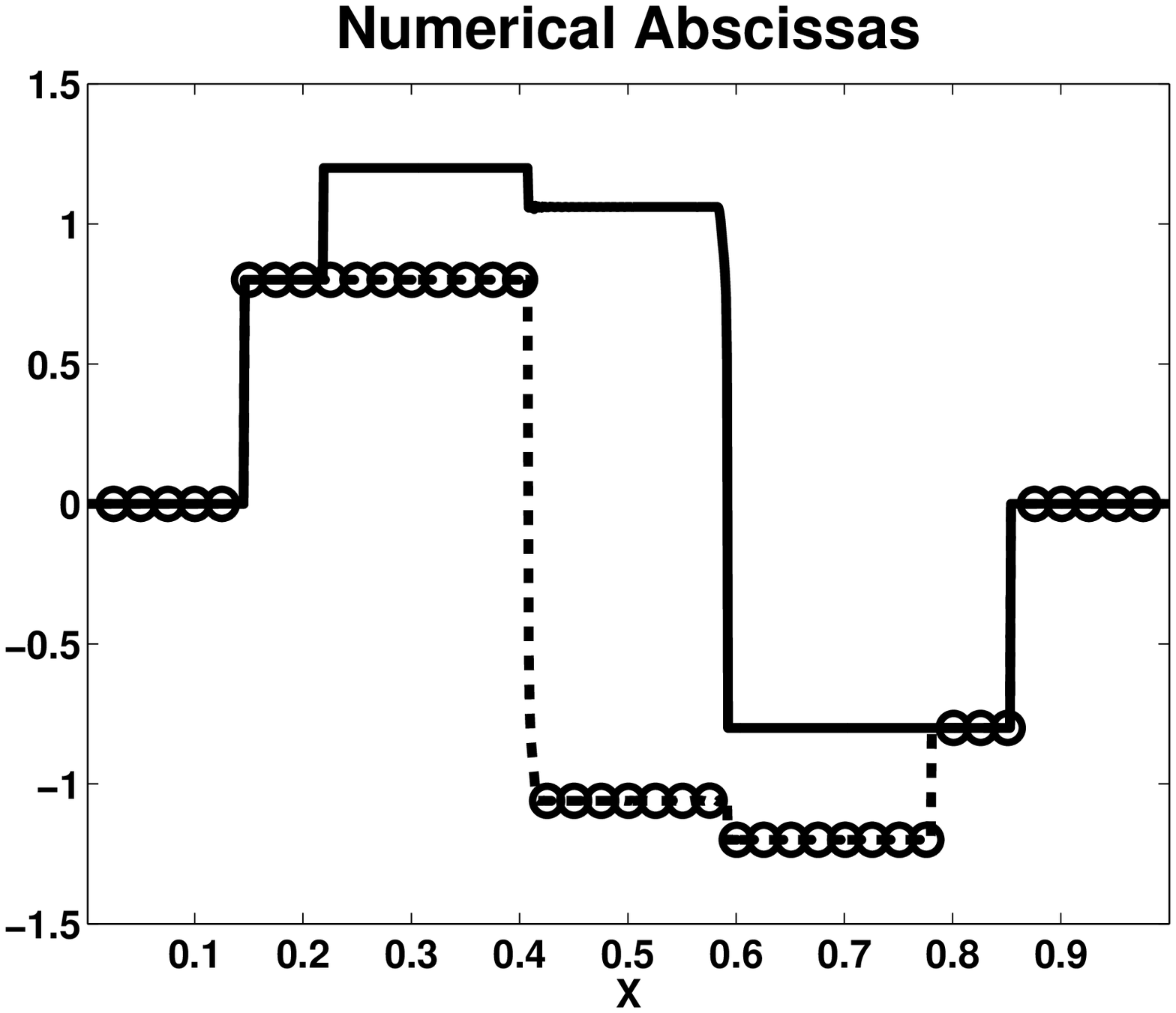}
		\caption{Four packet case. Results at $t=0.1$. 
	Top:  Analytical weights (left) and abscissas (right).
	Bottom:  Numerical weights (left) and abscissas (right).
	The solid line corresponds to the set ($\rho_1,v_1$), and the dashed 
line with circles to the set ($\rho_2,v_2$).} 
		\label{fig:four_clouds_1bis} 
  \end{center}
\end{figure}

We have thus provided numerical simulations in the two cases for which we have at our disposal an analytical entropic solution, either in the piecewise contant case, or in the singular case where $\delta$-shock mesure solutions are present. In the first case, the crossing of the two droplets monokinetic packets is very properly reproduced without numerical diffusion since we work at CFL one, even if this is not symptomatic of the numerical diffusion such methods will encounter with a first order method in realistic configurations \cite{kah10}.
In the second case, the numerical method is able to capture the creation of the measure singular solutions associated to the fact the we have limited the number of quadrature node to two. With this node number limitation, the proper physical solution, in the infinite Knudsen number limit, where the various droplet packets cross without interacting, differs from the entropic solution of the system of partial differential equations (\ref{eq:modele_bipic}) obtained through the quadrature-based closure. 
This is the same type of behavior as seen in the case of pressureless gas dynamics at a lower level. \\

\FloatBarrier


\subsubsection*{Free boundary case}

The last test case, explained in subsection\, \ref{trans},  assesses the ability of the method to solve free boundary cases 
connecting in a continuous manner states lying in the interior of the moment space and at its frontier.
%
The chosen grid contains $400$ cells, CFL number is set to $0.98$. This value of the CFL number is chosen
is order to prevent high frequency instabilities to occur. The computation is run until $t=0.2$.
The analytical solution has been provided in subsection\, \ref{trans}.

Results are displayed in Fig.\, \ref{fig:res_transition}.
 Let us first focus on the fronts present at $x=0.2$ and $x=0.9$ for the analytical solution.
  Since the CFL number is based on the highest velocity value ($2$ in this case) and is taken as $0.98$, the corresponding wave is less 
 diffused at $x=0.9$, contrary to the front wave located at $x=0.2$ moving at velocity $v_2=1$.
  Note that in these areas, $\rho_1=\rho_2$, since $e =0$ or $e < \epsilon_1$. The borders of these areas are clearly seen at 
 $x \approx 0.28$ and $x=0.8$.
 The  constant profiles for $\rho_2$ and $\rho_1$  observed respectively between $x=0.5$ and $x=0.7$ and between $x=0.5$ and $x=0.8$
 correspond to the one observed in the analytical solution.
In the area between $x=0.28$ and $x=0.5$, the density profiles would be expected to be constant, with the same value as before. Instead of that, 
one observes a peak value for $\rho_2$ and a low value for $\rho_1$, the sum $\rho_1+\rho_2$ being constant. 
This results from a coupling
between the behaviour of the quadrature method when $e$ tends to zero and the numerical diffusion. We are here 
in the situation $q > 0$ and $e$ small but $e>\epsilon_1$. See Lemma \ref{lemma23}.
Further away from the discontinuities the density values tend to their analytical value.


 \begin{figure}[h!]
  \begin{center}
         \includegraphics[width=0.33\textwidth]{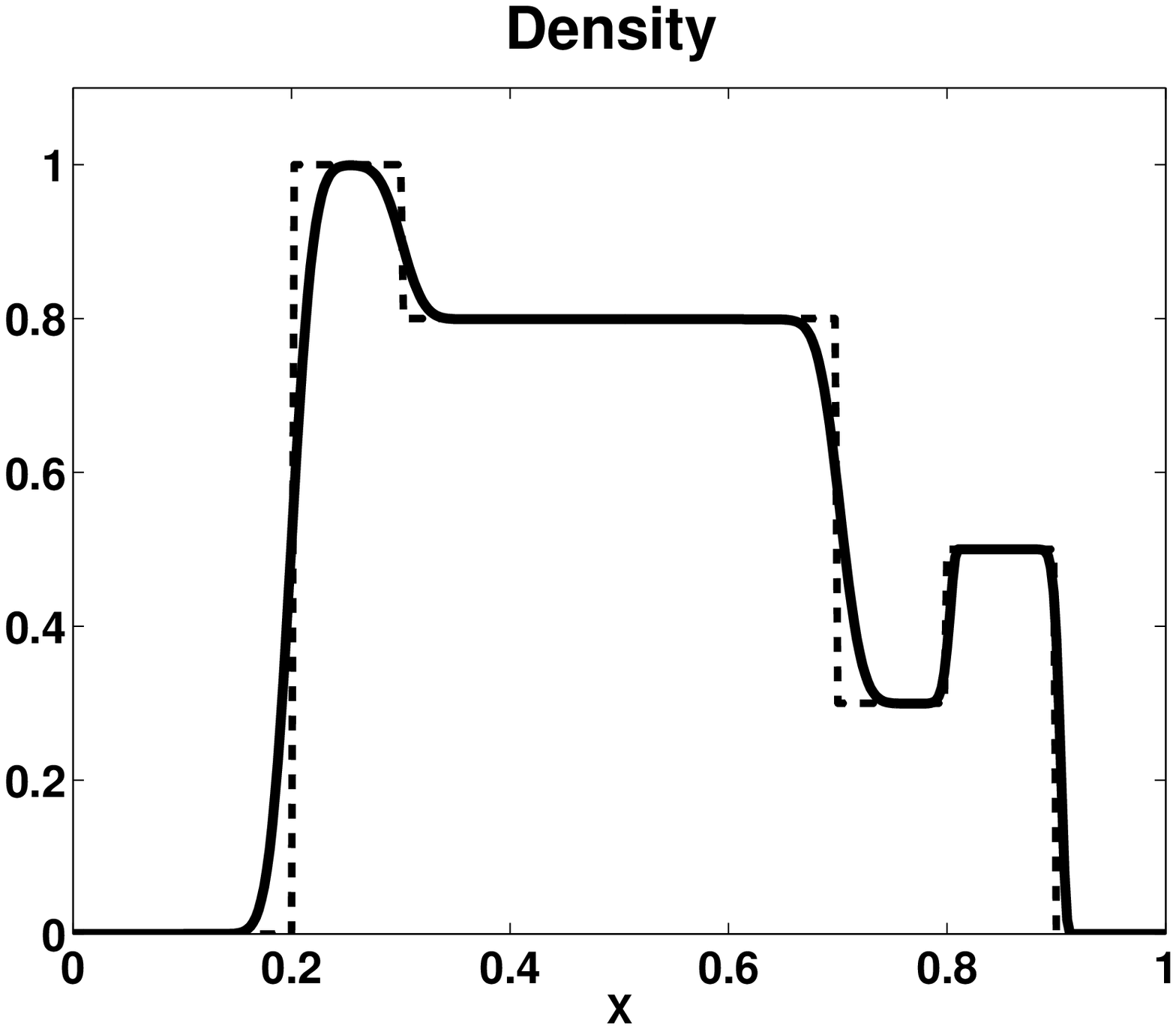}
          \includegraphics[width=0.33\textwidth]{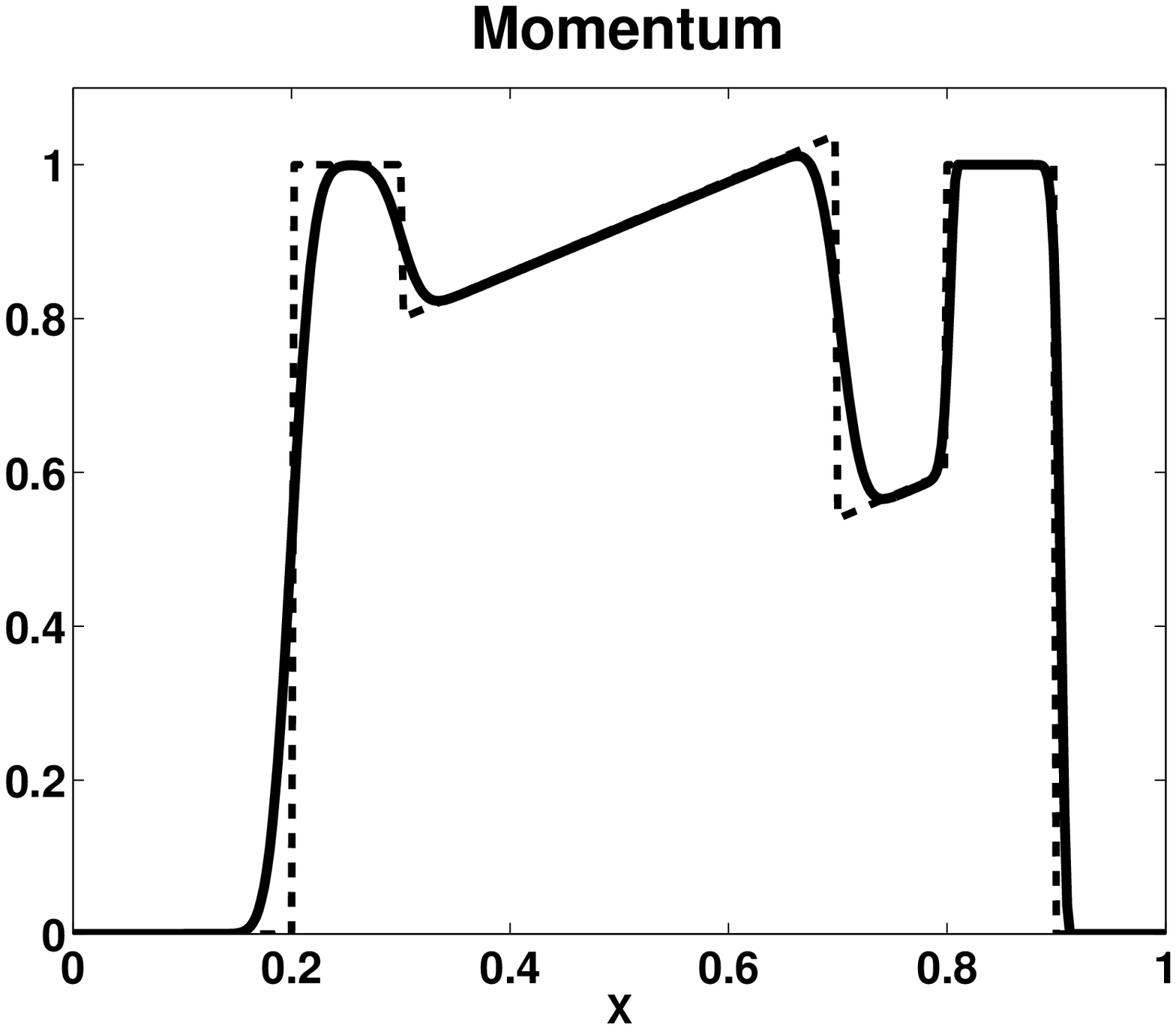}
              \includegraphics[width=0.33\textwidth]{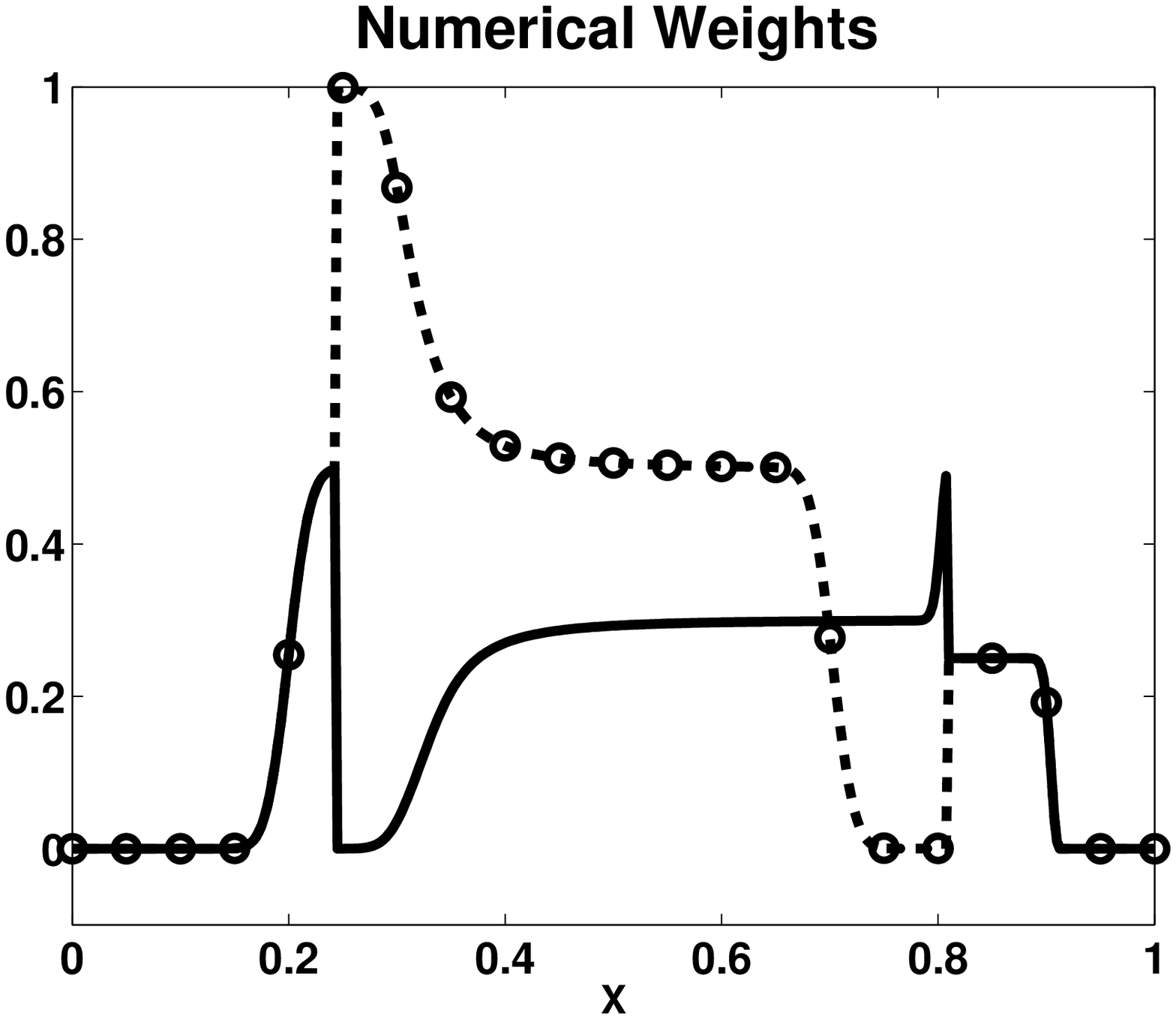}
            \includegraphics[width=0.33\textwidth]{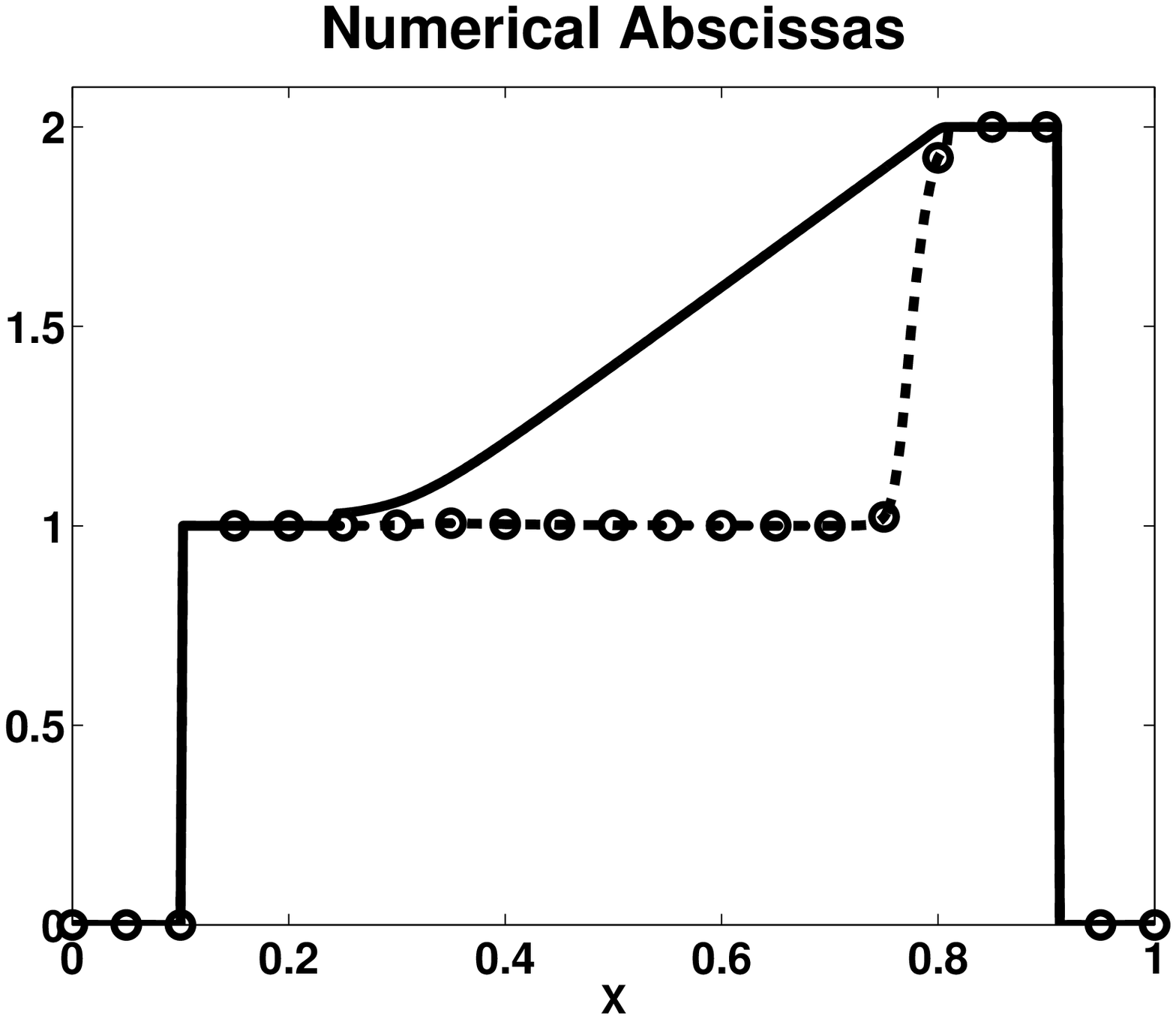}
          \caption{Moment dynamics for a free boundary connecting areas where $e=0$ and $e>0$. Results at $t=0.2$. 
          Top: Analytical (dashed line) and numerical (solid line) density (left) and momentum (right).
          Bottom: weights (left) and abscissas (right). The solid line corresponds to the set ($\rho_1,v_1$), and the dashed line with circles to the set ($\rho_2,v_2$). }
	
		\label{fig:res_transition} 
  \end{center}
\end{figure}

 \begin{figure}[h!]
  \begin{center}
         \includegraphics[width=0.33\textwidth]{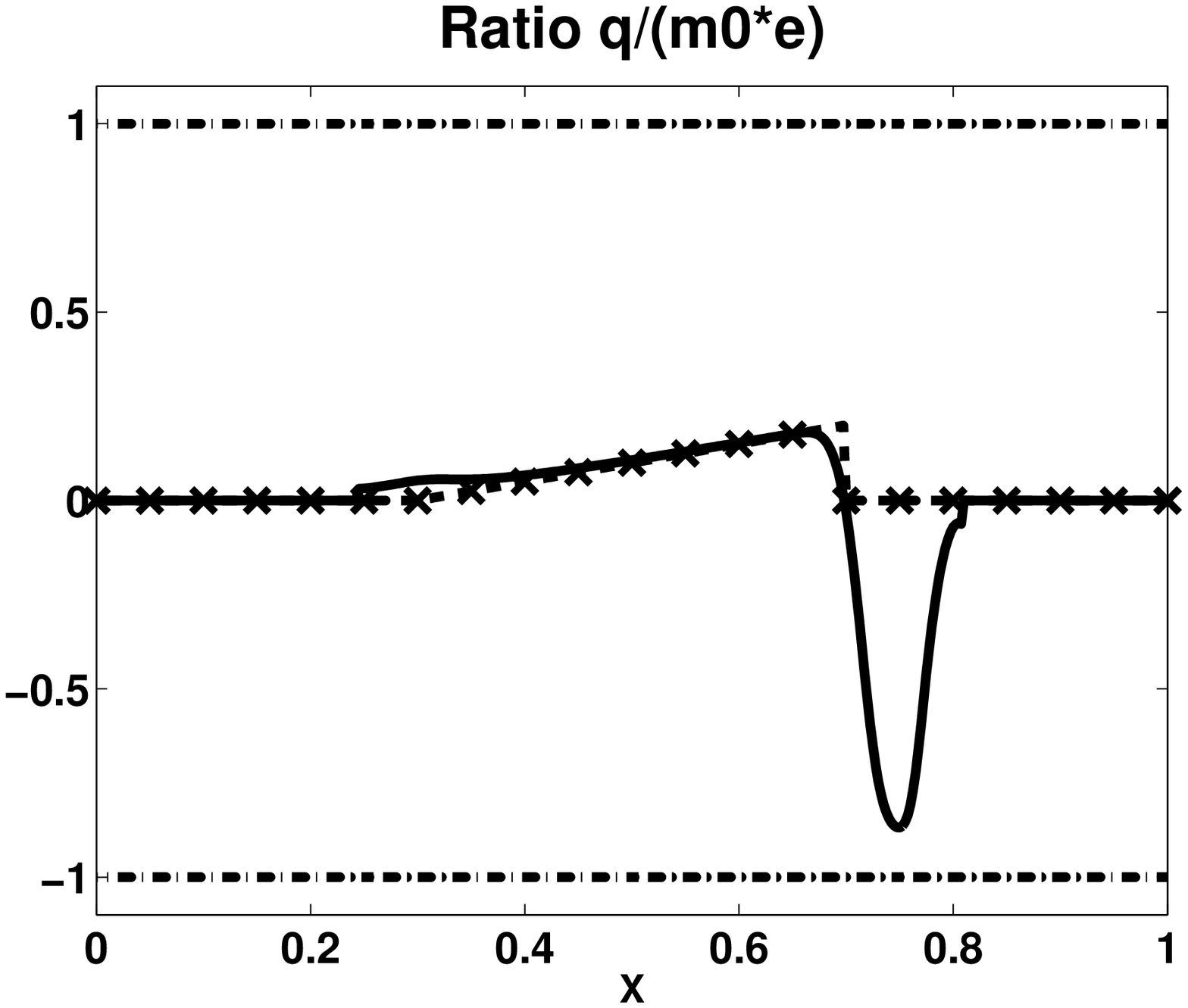}
          \includegraphics[width=0.33\textwidth]{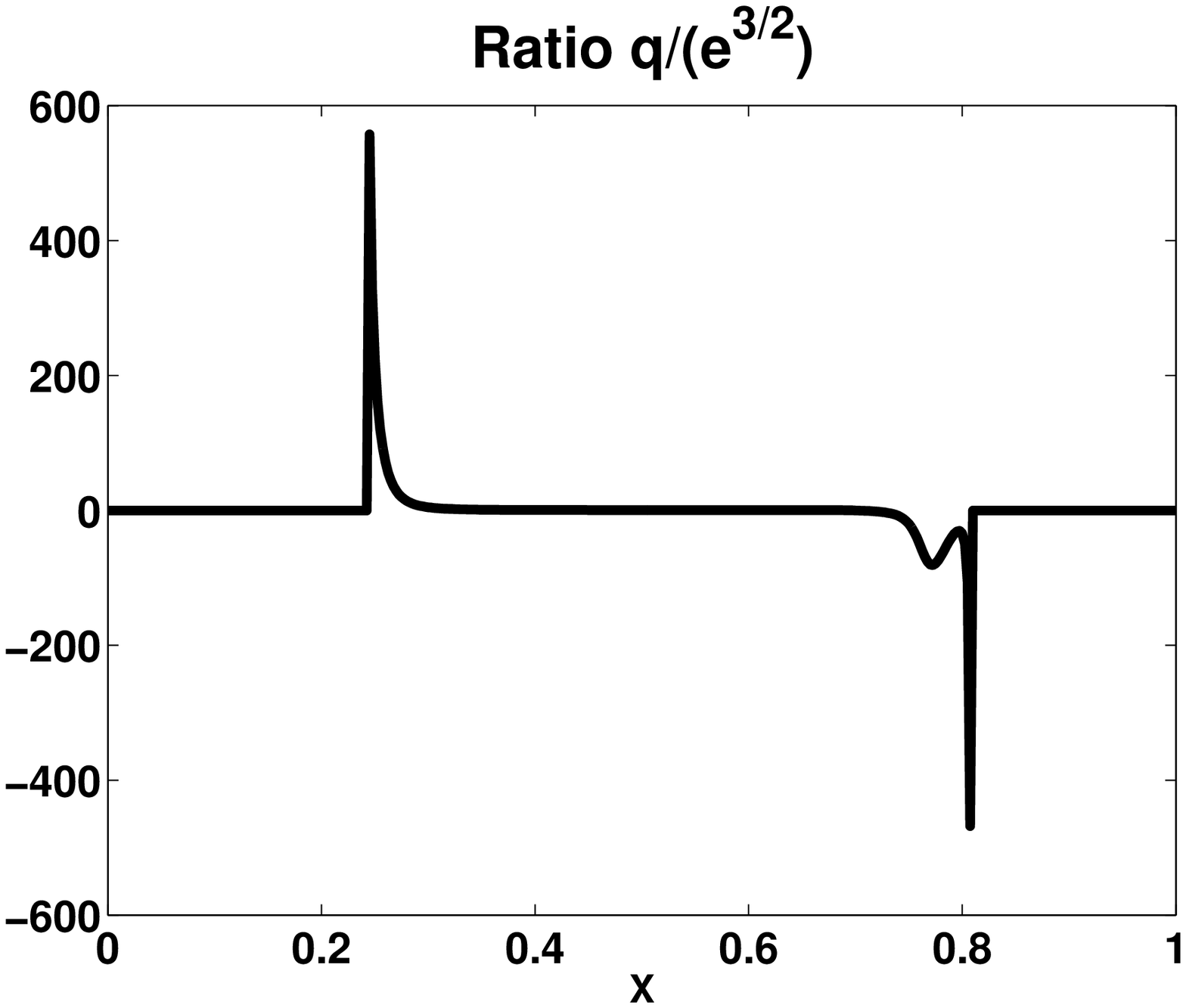}
           \caption{Moment dynamics for a free boundary connecting areas where $e=0$ and $e>0$. Results at $t=0.2$. Left: ratio $q/(M_0e)$.  The dashed line with crosses represents the analytical solution, whereas the solid line represents the numerical solution, bounded by its extremal initial values (dotted-dashed curve). Right: ratio $q/e^{3/2}$. }
		\label{fig:res_transition_2} 
  \end{center}
\end{figure}

Figure \ref{fig:res_transition_2} displays the profile of quantities $\frac{q}{M_0e}$ and $\frac{q}{e^{3/2}}$ at time $t=0.2$. 
First, it is interesting to note that   $\frac{q}{M_0e}$ 
 is naturally bounded by $1$ in the numerical simulation, as shown in Fig\, \ref{fig:res_transition_2} such that the bound imposed with $\eta = 2$ in never effective. Let us underline that in the present case, the numerical scheme allows to preserve the proposed cone associated with a maximal distance between the abscissas which is also invariant. 
Comparing the analytical value of $\vert q/(M_0\,e)\vert$ to the numerical resolution in Fig.  \ref{fig:res_transition_2}, it is instructive to observe that the numerical diffusion is creating zones where it can be far from zero, whereas it should be zero in the analytical solution. However, this is done is such a way as to preserve the maximal value foreseen as the maximal distance between the abscissa at time $t=0$ which is one. 
Let us also underline that we have rerun this case with various values of $\epsilon_1$ varying from $10^{-9}$ up to  $10^{-7}$ without any  effect on the solution. It can also be noticed that the quantity $q/e^{3/2}$ has large values in the regions of connection between the interior and the frontier of the moment space, but has no reason to be naturally limited as opposed to the previous quantity as it can be seen in  Fig.\,\ref{fig:res_transition}-bottom left. 


Concerning the convergence behavior of the numerical solution, we display the error in $L_1$ norm relative to the analytical solution to assess convergence quantitatively, Tab~\ref{errorL1}, for $400$, $800$, $1600$, and $3200$ cell grids. As expected, we get an experimental order of convergence of $0.5$. These data clearly show the convergence towards the analytical solution for each moment. 

As a consequence, it can be seen that we have designed the proper theoretical setting for the transition from the interior of the moment space towards its frontier since the cone we have defined seems to be automatically preserved by the kinetic scheme we have used. Such a point would be worth a detailed study which is beyond the scope of the present paper.

\begin{table}[h!]
\centering
\begin{tabular} {|c@{\hspace{1.5cm}}|c@{\hspace{1.5cm}}c@{\hspace{1.5cm}}c@{\hspace{1.5cm}}c|}
\hline
			Grid size			&	$400$      &     $800$     &    $1600$ &   $3200$   \\ \hline
			$m_0$                         &        0.049 &     0.0344  &   0.0244 &   0.0172     \\ \hline
			$m_1$   			&        0.0442 &     0.0307 &   0.0219 &   0.0153  	   \\ \hline
			$m_2$			&       0.0396  &     0.0271 &    0.0195 &  0.0136     \\ \hline
			$m_3$			&     0.0362    &	    0.0244	& 0.0177	&  0.0122      \\ \hline
							
\end{tabular}
\caption{$L_1$ error on moments relative to the analytical solution.}
\label{errorL1}
\end{table}




\FloatBarrier

\section{Conclusion}

In this paper, we have extended the notion of entropic measure solution of quadrature-based moment method for kinetic equations. Such kinetic equations are frequently encountered in many application fields where a complex dynamics in phase space is involved. 
Following the contribution of \cite{Bouchut03} for the pressureless gas dynamics which is the one-node quadrature version of a more general system of conservation laws for quadrature-based moment models, we have been able to provide a few problem test-cases showing that the numerical solution of the resulting system of conservation laws through kinetic schemes reproduces the defined entropic solution as well as the proper theoretical setting for the transition from the interior to the frontier of the moment space. It is an important point for the case of PTC where the solution remains smooth and where the scheme allows to describe the phase space dynamics properly as well as for cases where the complexity of the dynamics in phase space leads to generalized $\delta$-shocks, as observed for pressureles gas dynamics due to the weakly hyperbolic structure of the system of conservation laws. Two stumbling blocks still remains to be treated. First, we would need a uniqueness theory and a convergence analysis in a general framework in order to fully justify the use of the kinetic schemes for the simulation of such models. However, as explained already in \cite{Bouchut94}, the framework of entropic solution is not sufficient in order to provide uniqueness since one can exhibit multiple entropic solutions for measure solutions. 
Let us underline that it is easy to construct the same type of measure solutions for system (\ref{eq:modele_bipic_short}), which is the exact same collision case used by Bouchut, but with a motionless Dirac delta function in density localized at the collision point of the other two incoming ``particles".  An infinite set of entropic solution can then be exhibited depending on the nature of the collision. As a consequence, it would be first useful to investigate such a point on the pressureless gas dynamics and then to extend it to the present system of higher order quadrature-based moment models. 
Besides, the construction of fully high order methods is still an open question and requires further developments. \\
At last, let us mention that most of the results of the present paper do naturally extend to higher order moment systems, but at the 
price of algebra complications. In fact, the key point 
lies in the extension of the proposed study of the behavior of the quadrature at the frontier of the moment 
space, namely when the two velocities $v_1$ and $v_2$ become equal. If we consider for instance the $6$-moment model 
and assume that one of the three velocities $v_1$, $v_2$ and $v_3$ is smooth while the other two become equal, 
we are in the same framework as in the present paper. But the case when the three velocities tend to be equal 
needs to be precised in a future work.









%





\medskip

\noindent {\bf Acknowledgement.}
We  would like to thank the referees for  a very careful reading of the paper and for useful hints  to improve and make complete the present work.
\medskip

\end{document}